\documentclass[11pt]{amsart}

\usepackage[a4paper, margin=3cm]{geometry}
\usepackage{amsmath,amssymb}
\allowdisplaybreaks
\usepackage[colorlinks=true, pdfstartview=FitV, linkcolor=blue, citecolor=blue, urlcolor=blue]{hyperref}

\newtheorem{definition}{Definition}[section]
\newtheorem{theorem}[definition]{Theorem}
\newtheorem{proposition}[definition]{Proposition}
\newtheorem{corollary}[definition]{Corollary}
\newtheorem{lemma}[definition]{Lemma}
\newtheorem{priorresult}{Theorem}

\theoremstyle{remark}
\newtheorem{remark}[definition]{Remark}

\numberwithin{equation}{section}

\newcommand{\pv}{\operatorname{p.v.}}
\newcommand{\R}{\mathbb R}
\newcommand{\C}{\mathbb C}
\newcommand{\N}{\mathbb N}
\newcommand{\T}{\mathbb T}
\newcommand{\Sph}{\mathbb S^1}
\newcommand{\cK}{\mathcal K}
\newcommand{\cM}{\mathcal M}
\newcommand{\cI}{\mathcal I}
\newcommand{\cU}{\mathcal U}
\newcommand{\norm}[1]{\left\lVert #1\right\rVert}
\newcommand{\abs}[1]{\left\lvert #1\right\rvert}
\newcommand{\sgn}{\operatorname{sgn}}

\newcommand{\one}{\mathbf 1}

\newcommand{\Var}{\operatorname{Var}}

\newcommand{\supp}{\operatorname{supp}}

\newcommand{\p}{\partial}

\begin{document}
	
	\author[B. Dan]{Binwei Dan}
	\address{Binwei Dan:
		School of Mathematical Sciences \\
		Beijing Normal University \\
		Laboratory of Mathematics and Complex Systems \\
		Ministry of Education \\
		Beijing 100875 \\
		People's Republic of China}
	\email{danbinwei@mail.bnu.edu.cn}
	
	\author[Q. Xue]{Qingying Xue\(^{*}\)}
	\address{Qingying Xue:
		School of Mathematical Sciences \\
		Beijing Normal University \\
		Laboratory of Mathematics and Complex Systems \\
		Ministry of Education \\
		Beijing 100875 \\
		People's Republic of China}
	\email{qyxue@bnu.edu.cn}
	
	\keywords{Rough bilinear singular integrals; angular multipliers; \(L\log L\) kernel;
		directional kernel conditions; Grafakos--Stefanov condition.}
	\subjclass[2020]{Primary 42B20; Secondary 42B35, 42B25.}
	
	\thanks{The authors were partly supported by the National Key R\&D Program of China (No. 2020YFA0712900) and NNSF of China (No. 12271041).}
	\thanks{\(^{*}\) Corresponding author, e-mail address: qyxue@bnu.edu.cn.}
	
	\date{\today}
	\title[Endpoint criteria for bilinear rough singular integrals]
	{Endpoint Criteria for One-Dimensional Bilinear Rough Singular Integrals}
	
	\begin{abstract}
	We prove endpoint theorems for  one-dimensional bilinear rough singular integrals.  The point of departure is a sharp structural description of the
	angular multiplier.  For every mean-zero \(\Omega\in L^1(\Sph)\), the
	finite-part angular multiplier associated with \(T_\Omega\) has bounded
	variation if and only if the antipodal even part of \(\Omega\) belongs to
	\(H^1(\Sph)\).  This identifies the exact rotational regularity needed for
	the one-dimensional bilinear problem and gives a Stieltjes decomposition
	compatible with uniform bilinear Hilbert transform estimates.
	We then deduce two endpoint boundedness criteria.  First, if
	\(\Omega\in L\log L(\Sph)\), then
	$
	T_\Omega:L^{p_1}(\R)\times L^{p_2}(\R)\to L^p(\R)
	$
	for all \(1<p_1,p_2,p<\infty\) with
	\(1/p=1/p_1+1/p_2\).  The logarithmic exponent \(1\) is optimal in the
	scale \(L(\log L)^A\).  Second, at the critical directional threshold,
	if \(\Omega\in\cK_{1/2,\beta}(\Sph)\), then the same boundedness holds
	whenever
	$
	\beta>\frac32\max\{p_1,p_1',p_2,p_2'\}-1 .
	$
	These two endpoint kernel classes are  incomparable.  The Orlicz
	endpoint is obtained by reducing the multiplier to a finite-part angular
	profile of bounded variation; the directional endpoint is proved through
	endpoint Fourier decay, product wavelet decompositions, and interpolation.
	\end{abstract}
	\maketitle
	\tableofcontents
	
	\section{Introduction and main results}
	
		\subsection{Background and motivation}This paper concerns the endpoint theory of the one-dimensional bilinear rough
	singular integral
	\begin{equation}\label{eq:T-def}
		T_\Omega(f,g)(x)
		:=
		\pv\int_{\R^2}
		\frac{\Omega(y/\abs y)}{\abs y^2}
		f(x-y_1)g(x-y_2)\,dy_1\,dy_2,
	\end{equation}
	initially defined for \(f,g\in\mathcal S(\R)\).  Here \(\Omega\) is a
	mean-zero function on the unit circle \(\Sph\), and no smoothness is imposed on
	\(\Omega\).   We normalize surface
	measure by \(\sigma(\Sph)=1\) and assume
	\(\int_{\Sph}\Omega\,d\sigma=0\).  Unless stated otherwise, all norms and
	integrals over \(\Sph\) are taken with respect to this normalized measure.  For
	\(1<r<\infty\), \(r'\) denotes the H\"older conjugate exponent.
	
	The problem is to determine how little angular regularity is needed for
	\eqref{eq:T-def} to be bounded.  In the linear theory this question goes back
	to Calder\'on and Zygmund and is closely tied to near-\(L^1\) assumptions such
	as \(L\log L\); see
	\cite{CalderonZygmund1956,DuoandikoetxeaRubio1986,Christ1988,Seeger1996,
		GrafakosStefanov1998,Tao1999}.  For bilinear and multilinear rough singular
	integrals, substantial progress has been made under \(L^q\)-type hypotheses,
	sparse and weighted estimates, and endpoint criteria; see
	\cite{CoifmanMeyer1975,GrafakosHeHonzik2018,GrafakosHeSlavikova2020,
		Barron2017,GrafakosWangXue2022,HePark2023,
		GrafakosHeHonzikPark2023}.
	
	We recall the results closest to the endpoint considered here.  They are stated
	in their original \(n\)-dimensional form.  For \(n\ge1\) let
	\(d\sigma_n\) denote normalized surface measure on \(\mathbb S^{2n-1}\), and set
	\[
	T_\Omega^{(n)}(f,g)(x)
	:=
	\pv\int_{\R^n\times\R^n}
	\frac{\Omega\bigl((y,z)/|(y,z)|\bigr)}{|(y,z)|^{2n}}
	f(x-y)g(x-z)\,dy\,dz,
	\qquad x\in\R^n.
	\]
	Thus \(T_\Omega^{(1)}\) is the operator in \eqref{eq:T-def}.  The superscript is
	used only in this discussion of prior results.
	He and Park proved the following improved \(L^q\) estimate.
	\begin{priorresult}[\cite{HePark2023}]
		\label{thm:prior-A}
		Let \(n\ge1\),
		$
		1<p_1,p_2\le\infty,
		\,
		\frac12<p<\infty,
		\,
		\frac1p=\frac1{p_1}+\frac1{p_2}.$
		Suppose that
		$\max\left\{\frac43,\frac{p}{2p-1}\right\}<q\le\infty$
		and that the kernel
		\(\Omega\in L^q(\mathbb S^{2n-1})\) has mean zero. 
		Then
		\[	
		\norm{T_\Omega^{(n)}(f,g)}_{L^p(\R^n)}
		\le
		C_{n,p_1,p_2,q}\norm{\Omega}_{L^q(\mathbb S^{2n-1})}
		\norm f_{L^{p_1}(\R^n)}\norm g_{L^{p_2}(\R^n)}.
		\]
	\end{priorresult}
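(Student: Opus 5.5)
This is He and Park's result, quoted as a prior theorem, so the plan below reconstructs the standard strategy behind it rather than proposing anything new.

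\textbf{Step 1: decompositions.} Take a smooth Littlewood--Paley partition \(\sum_j\widehat{\psi}(2^{-j}\cdot)=1\) on \(\R^{2n}\setminus\{0\}\) and a dyadic decomposition of the kernel,
\[
K=\frac{\Omega(\cdot/|\cdot|)}{|\cdot|^{2n}}=\sum_{\gamma\in\mathbb Z}K_\gamma,
\qquad K_\gamma(y,z)=\widehat\phi\bigl(2^{\gamma}|(y,z)|\bigr)\,K(y,z),
\]
where \(\widehat\phi\) localizes to \(|(y,z)|\sim2^{-\gamma}\). Write the multiplier as
\[
m=\sum_{\mu\in\mathbb Z} m_\mu,\qquad
m_\mu(\xi,\eta)=\sum_\gamma \widehat{\psi}\bigl(2^{-(\gamma+\mu)}(\xi,\eta)\bigr)\,\widehat{K_\gamma}(\xi,\eta).
\]

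\textbf{Step 2: the pieces with \(\mu\le0\).} Here the mean-zero property of \(\Omega\) gives \(|\widehat{K_0}(\xi,\eta)|\lesssim\|\Omega\|_{L^1}\,|(\xi,\eta)|\) near the origin. The resulting terms have exponentially decaying symbols and are summed by standard Coifman--Meyer-type arguments.

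\textbf{Step 3: the pieces with \(\mu>0\).} This is the core of the argument.
\begin{itemize}
\item \emph{Fourier decay.} One has \(\|\widehat{K_0}\,\widehat\psi(2^{-\mu}\cdot)\|_{L^\infty}\lesssim 2^{-\delta\mu}\|\Omega\|_{L^q}\) for some \(\delta>0\), obtained via the \(L^q\to L^2\)-type decay of spherical averages.
\item \emph{Wavelet expansion.} Expand each \(m_\mu\) in compactly supported tensor-product Daubechies wavelets at scale \(2^\mu\).
\item \emph{The \(L^2\times L^2\to L^1\) estimate.} Reduce to bounding bilinear operators of the form
\[
\sum_{k,\mathbf n} b_{k,\mathbf n}\,\langle f,\psi_{k}\rangle\,\langle g,\psi_{n}\rangle\,\psi_{\ldots}.
\]
Prove this bound with decay \(2^{-\epsilon\mu}\) using the key lemma: an \(L^2\) estimate that depends only on \(\|b\|_{\ell^\infty}\) and on \(\sup_k\|b_{k,\cdot}\|_{\ell^2}\). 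The coefficient sizes are controlled by Step~3's Fourier decay, while the \(\ell^2\) control comes from Plancherel applied to \(\Omega\in L^2\).
\end{itemize}
The hard part is that \(\Omega\) lies only in \(L^q\) with \(q<2\). To handle it, split \(\Omega\) at height \(2^{c\mu}\) into an \(L^\infty\) part and a small-\(L^1\) part. The bad part is estimated crudely at cost \(2^{c\mu(1-q)}\|\Omega\|_q^q\). The good part is estimated with a polynomial loss in its \(L^2\) norm, which is \(\lesssim2^{c\mu(1-q/2)}\). Balancing the losses against the gain \(2^{-\epsilon\mu}\) yields the threshold \(q>4/3\). \emph{I expect this balancing to be the main obstacle:} getting the exponent bookkeeping to land exactly at \(4/3\) requires a sharp \(\ell^2\)-\(\ell^\infty\) estimate for the wavelet coefficients.

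\textbf{Step 4: other exponents.} For each \(\mu\), establish a crude bound
\[
\|T_{m_\mu}\|_{L^{p_1}\times L^{p_2}\to L^p}\lesssim \mu\,\|\Omega\|_{L^1},
\]
via a Calder\'on--Zygmund/Grafakos--Torres-type analysis; this is valid for \(p>1/2\). Interpolate it multilinearly against the \(L^2\times L^2\to L^1\) decay bound, and extend further by duality (adjoints). This covers all \((p_1,p_2)\) with \(q\) above the stated threshold. The constraint \(q>p/(2p-1)\), i.e. \(1/q<2-1/p\), emerges from how far the \(L^1\) interpolation endpoint can be pushed for \(p<1\). Finally, sum over \(\mu>0\) using the geometric decay that interpolation preserves.
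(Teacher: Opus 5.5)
The paper quotes this theorem from He and Park without proof, so your outline has to stand on its own. Two of its load-bearing steps fail.

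\textbf{Step 4 (the crude endpoint).} The bound \(\|T_{m_\mu}\|_{L^{p_1}\times L^{p_2}\to L^p}\lesssim\mu\|\Omega\|_{L^1}\) for all \(p>1/2\) is false when \(p<1\). Interpolating it against the \(L^2\times L^2\to L^1\) decay, as you propose, would give boundedness for every \(q>4/3\) at every \(p>1/2\). But \(1/p+1/q\le 2\) is necessary. For \(n=1\), let \(\Omega(\theta)=|\theta-\theta_0|^{-a}\) near \(\theta_0=(1,1)/\sqrt2\), smooth elsewhere and with its mean subtracted, so \(\Omega\in L^q\) when \(a<1/q\). Let \(f=g\ge0\) be a bump on \([0,1]\). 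For large \(x>0\) the direction of \((x-y,x-z)\) differs from \(\theta_0\) by about \((y-z)/(2x)\), so \(T_\Omega(f,g)(x)\approx c_a x^{a-2}\). This is not in \(L^p\) once \(a\ge 2-1/p\). Testing the single piece \(T_{m_\mu}\) with an \(L^1\)-normalized spike on an arc of length \(\approx 2^{-\mu}\) at \(\theta_0\) shows directly that its loss is at least \(2^{\mu(1/p-1)}\).

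So the hypothesis \(q>p/(2p-1)\) cannot come from ``pushing an \(L^1\) endpoint''. It has to enter through an endpoint estimate stated in terms of \(\|\Omega\|_{L^q}\) and valid only when \(1/p+1/q<2\), and your plan does not formulate one. Even for \(p\ge1\), the crude bound is not a routine Calder\'on--Zygmund consequence. The Grafakos--Torres machinery needs an initial strong-type bound with the same constant, and for \(L^1\) kernels the only elementary one is Coifman--Meyer with loss \(2^{N\mu}\). A polynomial or \(2^{\varepsilon\mu}\) loss requires the shifted maximal and square-function argument of Dosidis and Slav\'ikova (compare Lemma~\ref{lem:poly-growth}, which adapts it only for \(1<r<\infty\)).

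\textbf{Step 3 (the source of \(4/3\)).} Your height splitting cannot produce \(4/3\). The cost \(2^{c\mu(1-q)}\|\Omega\|_q^q\) you assign to the bad part is just its \(L^1\) norm, so you are tacitly using exactly such an \(L^1\)-kernel bound at \(L^2\times L^2\to L^1\).

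\emph{If that bound holds with subexponential loss}, your balancing closes for every \(q>1\): take \(c\) small enough that the good part's loss \(2^{c\mu(1-q/2)}\) is beaten by its decay. That is the later Dosidis--Slav\'ikova theorem, not He--Park's.

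\emph{If only the Coifman--Meyer loss is available}, the resulting threshold depends on \(N\) and the decay rate and has nothing to do with \(4/3\).

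In the Grafakos--He--Slav\'ikova argument that He and Park build on, no splitting is used. Hausdorff--Young gives \(\|\widehat{K_0}\|_{L^{q'}}\lesssim\|\Omega\|_{L^q}\), hence \(\ell^{q'}\) control of the wavelet coefficients. The key lemma then bounds the \(L^2\times L^2\to L^1\) norm by \(\|b\|_{\ell^\infty}^{1-r/4}\|b\|_{\ell^r}^{r/4}\) for \(r<4\); this is the structure \(A_{\lambda,j}^{1-q/4}B_{\lambda,j,q}^{q/4}\) of Lemma~\ref{lem:GHHP-specialized}. With \(r=q'\), the requirement \(r<4\) is exactly \(q>4/3\). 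The decay \(\|b\|_{\ell^\infty}\lesssim 2^{-\delta\mu}\|\Omega\|_{L^q}\) then gives the summable factor \(2^{-\delta\mu(1-q'/4)}\). An \(\ell^\infty\)/\(\ell^2\) lemma like the one you describe only exploits \(\Omega\in L^2\), which is why you were pushed into the splitting in the first place.
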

	
	For finite input exponents, Dosidis and Slav\'ikova later removed the lower
	bound \(q>4/3\).  Their result gives boundedness for every \(q>1\), subject to
	the condition \(1/p+1/q<2\).
	
	\begin{priorresult}[\cite{dosidis_multilinear_2024}]
		\label{thm:prior-B}
		Let \(n\ge1\), \(q>1\), and $
		1<p_1,p_2<\infty,
		\,
		\frac1p=\frac1{p_1}+\frac1{p_2}.
		$
		Suppose that \(1/p+1/q<2\) and that the kernel
		\(\Omega\in L^q(\mathbb S^{2n-1})\) has mean zero.  Then
		\[
		\norm{T_\Omega^{(n)}(f,g)}_{L^p(\R^n)}
		\le
		C_{n,p_1,p_2,q}\norm{\Omega}_{L^q(\mathbb S^{2n-1})}
		\norm f_{L^{p_1}(\R^n)}\norm g_{L^{p_2}(\R^n)}.
		\]
	\end{priorresult}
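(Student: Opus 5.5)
The plan is to follow the by now standard frequency-decomposition scheme for rough bilinear operators (Grafakos--He--Honzík, Grafakos--He--Slav\'ikova, He--Park). The new ingredient, which is the heart of the matter, is a level-set decomposition of \(\Omega\). This decomposition lets one pay only a logarithmic price for the size of \(\Omega\), rather than a power of \(\norm{\Omega}_{L^q}\) tied to \(q>4/3\).

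\emph{Step 1: decompositions.} First I would split the kernel dyadically,
\[
K=\sum_{j\in\mathbb Z}K_j,\qquad K_j(y,z)=\Omega\bigl((y,z)/|(y,z)|\bigr)|(y,z)|^{-2n}\phi\bigl(2^{-j}|(y,z)|\bigr).
\]
Then I would write the multiplier as \(\sum_j \widehat{K_0}(2^j\cdot)\) and insert a Littlewood--Paley partition \(\widehat\psi(2^{j-\mu}(\xi,\eta))\) on \(\R^{2n}\). This gives
\[
T_\Omega^{(n)}=\sum_{\mu\in\mathbb Z}T_\mu,
\qquad
m_\mu=\sum_j \widehat{K_0}(2^j\cdot)\,\widehat\psi(2^{j-\mu}\cdot).
\]
For \(\mu\le0\) the mean-zero condition gives \(|\widehat{K_0}(\zeta)|\lesssim|\zeta|\norm{\Omega}_{L^1}\). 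These terms form a smooth Coifman--Meyer-type symbol with geometric decay \(2^{\mu}\), so they are harmless.

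Second, I would decompose \(\Omega\) by level sets,
\[
\Omega=\sum_{k\ge0}\Omega_k,\qquad \Omega_k=\Omega\,\one_{\{2^{k-1}<|\Omega|\le2^k\}}-\text{(mean)},
\]
with \(\norm{\Omega_k}_{L^\infty}\lesssim2^k\) and \(\norm{\Omega_k}_{L^1}\lesssim 2^{k}|E_k|\), where \(E_k=\{2^{k-1}<|\Omega|\le 2^k\}\). By Chebyshev, \(\sum_k 2^{kq}|E_k|\lesssim\norm{\Omega}_{L^q}^q\).

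\emph{Step 2: the \(L^2\times L^2\to L^1\) estimate for high frequencies (\(\mu>0\)).} This is the main obstacle. For a bounded piece \(\Omega_k\) I would follow Grafakos--He--Slav\'ikova. Expand \(m_\mu\) in a compactly supported smooth wavelet basis on \(\R^{2n}\) at scale \(2^{\mu}\). Use the Fourier decay \(|\partial^\alpha\widehat{K_0}(\zeta)|\lesssim\norm{\Omega_k}_{L^2}|\zeta|^{-\delta}\), which at level \(\mu\) produces a factor \(2^{-\delta\mu}\). Then bound the resulting bilinear wavelet sums by the \(\ell^2\) orthogonality argument combined with the \(L^2\) size of the wavelet coefficients. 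This should give
\[
\norm{T_{\mu}^{(k)}}_{L^2\times L^2\to L^1}\lesssim 2^{-\delta\mu}\,\norm{\Omega_k}_{L^2}\lesssim 2^{-\delta\mu}\,2^{k}|E_k|^{1/2}.
\]
The key point, and what I expect to be delicate, is to keep the dependence on \(\Omega_k\) through \(\norm{\Omega_k}_{L^2}\), or better through \(\norm{\Omega_k}_{L^1}^{\theta}\norm{\Omega_k}_{L^\infty}^{1-\theta}\), rather than through \(\norm{\Omega}_{L^q}\) with \(q>4/3\).

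\emph{Step 3: crude estimates and interpolation.} Next I need a growth estimate for each \(T_\mu^{(k)}\) on the target space \(L^{p_1}\times L^{p_2}\to L^p\). For \(p\ge1\) I would use \(|T_\mu^{(k)}(f,g)|\lesssim \norm{\Omega_k}_{L^1}\,\mu\, Mf\,Mg\), plus a local-in-scale argument. For \(p<1\) I would localize in space at scale \(2^{j}\) and use H\"older with \(\Omega_k\in L^{r}\) to control the \(L^p\) quasinorm; this is where the constraint \(1/p+1/r<2\) enters. Interpolating this growth estimate with the decay from Step 2 gives
\[
\norm{T_\mu^{(k)}}\lesssim 2^{-\epsilon\mu}\,2^{k}|E_k|^{1/q_0}
\]
for some \(q_0\) strictly between \(1\) and \(q\) with \(1/p+1/q_0<2\), which is possible exactly because \(1/p+1/q<2\).

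\emph{Step 4: summation.} Summing in \(\mu\) is geometric. Summing in \(k\) uses
\[
\sum_k 2^{k}|E_k|^{1/q_0}\le\Bigl(\sum_k 2^{kq}|E_k|\Bigr)^{1/q_0}\sup_k 2^{k(1-q/q_0)}\lesssim \norm{\Omega}_{L^q}^{q/q_0}
\]
when \(\norm{\Omega}_{L^q}=1\) (normalize by homogeneity), where the supremum is over \(k\ge0\). Thus the bound closes, and homogeneity yields the stated linear dependence on \(\norm{\Omega}_{L^q}\).
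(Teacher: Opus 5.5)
The paper does not prove this statement. It is quoted from \cite{dosidis_multilinear_2024}, and only the case \(n=1\), \(p>1\) is recovered (Corollary~\ref{cor:Lq-one-dimensional}), by an unrelated rotational argument: \(L^q\subset L\log L\), Riesz--Zygmund, a \(BV\) angular profile and uniform bilinear Hilbert transform bounds. So your proposal has to stand on its own, and it has a genuine gap in Step~3, which carries the whole argument. Steps~1 and~2 are fine, since for \(\Omega_k\in L^\infty\subset L^2\) the decay estimate at \(L^2\times L^2\to L^1\) is already known.

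In the Banach range, the bound \(|T_\mu^{(k)}(f,g)|\lesssim \mu\norm{\Omega_k}_{L^1}Mf\,Mg\) is not available. The operator \(T_\mu^{(k)}=\sum_{j\in\mathbb Z}T_{\mu,j}^{(k)}\) runs over all spatial scales. Each single-scale piece can be dominated by \(C_\mu\norm{\Omega_k}_{L^1}Mf\,Mg\), but these absolute bounds diverge when summed over \(j\). The series converges only because of the cancellation produced by the frequency localization \(|(\xi,\eta)|\sim 2^{\mu-j}\); a bilinear singular integral is not pointwise dominated by \(Mf\,Mg\). The bound you actually need is \(\norm{T_\mu^{(k)}}_{L^{r_1}\times L^{r_2}\to L^r}\lesssim_\varepsilon 2^{\varepsilon\mu}\norm{\Omega_k}_{L^1}\). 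It is true, but it requires the shifted maximal and square-function argument of \cite[Proposition~4]{dosidis_multilinear_2024}. You dualize against \(f_3\in L^{r'}\). You write the trilinear form as an integral in \(y\) of the prototype kernel against a sum over \(j\) of products of Littlewood--Paley pieces of \(f_1,f_2,f_3\) at frequency \(2^{\mu-j}\), shifted by \(2^\mu y_\ell\). You use that two of the three frequencies \(\xi,\eta,-(\xi+\eta)\) are \(\sim 2^{\mu-j}\) to apply Cauchy--Schwarz in \(j\). Finally you control the shifted square and maximal functions with a loss \(\log(2+2^\mu|y_\ell|)\). Lemma~\ref{lem:poly-growth} carries this out for \(n=1\). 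With it your interpolation for \(p\ge1\) works, but the argument rests on duality and lives only in the Banach range.

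The range \(p<1\) is not handled, and it is the only range in which \(1/p+1/q<2\) has content (the condition is automatic for \(p\ge1\), \(q>1\)). Interpolating Step~2 with Banach-range bounds never reaches a target with \(1/p>1\), so you need an endpoint with \(1/p>1\). The recipe \emph{localize in space at scale \(2^j\) and use H\"older with \(\Omega_k\in L^r\)} does not supply one as stated, because it is a single-scale device. The pieces \(T_{\mu,j}^{(k)}\) are dilates of one another, so they all have the same \(L^{p_1}\times L^{p_2}\to L^p\) norm, and hence \(\sum_j\norm{T_{\mu,j}^{(k)}}^p=\infty\). Turning it into a proof would need a full Calder\'on--Zygmund decomposition argument, which you do not sketch.

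The missing ingredient is the \(L^\infty\) endpoint for the bounded pieces. Since \(|\Omega_k|\lesssim 2^k\), the kernel \(K_\mu^{(k)}(w)\), \(w\in\R^{2n}\), of \(T_\mu^{(k)}\) has size \(\lesssim 2^k|w|^{-2n}\). Its increments over \(|h|\le |w|/2\) are \(\lesssim 2^k|w|^{-2n}\min\{1,2^\mu|h|/|w|\}\), so it is \(\varepsilon\)-H\"older with constant \(2^{\varepsilon\mu}2^k\). Bilinear Calder\'on--Zygmund theory \cite{GrafakosTorres2002}, with Step~2 as the initial bound, then gives \(\norm{T_\mu^{(k)}}_{L^1\times L^1\to L^{1/2,\infty}}\lesssim_\varepsilon 2^{\varepsilon\mu}2^k\). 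Now interpolate three estimates: this endpoint with weight about \(1/p-1\), the Banach bound \(2^{\varepsilon\mu}2^k|E_k|\) at a point with \(1/r\) close to \(1\), and Step~2 with a small weight \(c\). This yields \(2^{-\varepsilon'\mu}2^k|E_k|^{s}\) with \(s\) as close as you wish to \(2-1/p-c/2\). By the \(p\)-triangle inequality and \(|E_k|\lesssim 2^{-kq}\norm{\Omega}_{L^q}^q\), the \(k\)-sum converges once \(qs>1\). Such \(s\) exist exactly when \(1/p+1/q<2\); this, not H\"older in \(\Omega_k\in L^r\), is where the hypothesis enters.

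Finally, in Step~4 the inequality \(\sum_k a_k^{1/q_0}b_k\le(\sum_k a_k)^{1/q_0}\sup_k b_k\) is false for \(q_0>1\). Use \((\sup_k a_k)^{1/q_0}\sum_k b_k\) instead, which is finite because \(b_k=2^{k(1-q/q_0)}\) decays geometrically.
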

	
	Theorems~\ref{thm:prior-A} and~\ref{thm:prior-B} still require \(q>1\).
	Dosidis, Park, and Slav\'ikova obtained a near-\(L^1\) substitute in the
	Orlicz scale \cite[Theorem~3]{DosidisParkSlavikova2026}.  For \(A\ge0\), let
	\(L(\log L)^A\) denote the Orlicz space associated with
	\(\Phi_A(t)=t\log^A(e+t)\).
	
	\begin{priorresult}[\cite{DosidisParkSlavikova2026}]
		\label{thm:prior-C}
		Let \(n\ge1\),
		$
		1\le p<\infty,
		\,
		1<p_1,p_2\le\infty,
		\,
		\frac1p=\frac1{p_1}+\frac1{p_2},
		$
		and let \(p'\) be the conjugate exponent of \(p\), with \(p'=\infty\)
		when \(p=1\).  Suppose that
		$
		A\ge1+\max\left\{
		\frac1{p_1},\frac1{p_2},\frac1{p'}
		\right\}
		$
		and that the mean-zero kernel satisfies
		\(\Omega\in L(\log L)^A(\mathbb S^{2n-1})\).  Then, for all
		\(f,g\in\mathcal S(\R^n)\),
		\[
		\norm{T_\Omega^{(n)}(f,g)}_{L^p(\R^n)}
		\le
		C_{n,p_1,p_2}
		\norm{\Omega}_{L(\log L)^A(\mathbb S^{2n-1})}
		\norm f_{L^{p_1}(\R^n)}\norm g_{L^{p_2}(\R^n)}.
		\]
	\end{priorresult}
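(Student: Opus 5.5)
The plan is to decompose the operator in frequency relative to the dyadic scale of the kernel. The parameter measuring how far the frequency sits above the natural scale is then split against the size of \(\Omega\), in a scale-dependent (Seeger-type) way. Write \(K(y,z)=\Omega((y,z)/|(y,z)|)|(y,z)|^{-2n}=\sum_{j\in\mathbb Z}K_j\), where \(K_j=K\,\phi(2^{-j}|(y,z)|)\) for a smooth annular bump \(\phi\). Using a Littlewood--Paley partition \(\{\psi_k\}\) on \(\R^{2n}\), set
\[
K_j^{l}:=\bigl(\widehat{K_j}\,\psi_{l-j}\bigr)^{\vee}\quad (l\ge1),
\]
with \(K_j^{0}\) collecting the frequencies \(\lesssim 2^{-j}\). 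Then \(T^{(n)}_\Omega=\sum_{l\ge0}T_l\), where \(T_l\) has kernel \(\sum_jK_j^l\). The term \(T_0\) is a smooth Calder\'on--Zygmund bilinear operator; only here is the mean-zero hypothesis needed. Its norm is \(\lesssim\norm{\Omega}_{L^1}\) by the bilinear Calder\'on--Zygmund theory (Grafakos--Torres) together with the cancellation of \(\widehat{K_j}\) at the origin.

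The core of the argument consists of two estimates for a single \(T_l\), \(l\ge1\), each valid for any bounded function \(\omega\) on \(\mathbb S^{2n-1}\), with no mean-zero assumption. Write \(T_l[\omega]\) for \(T_l\) with \(\Omega\) replaced by \(\omega\).
\begin{itemize}
\item[(a)] An \(L^\infty\) decay estimate: \(\norm{T_l[\omega]}_{L^{p_1}\times L^{p_2}\to L^p}\lesssim 2^{-\delta l}\norm{\omega}_{L^\infty}\) for some \(\delta>0\). This comes from the Fourier decay \(|\widehat{K_j}(\xi)|\lesssim (2^j|\xi|)^{-\delta}\norm{\omega}_{L^\infty}\) (spherical-measure decay). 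At \(L^2\times L^2\to L^1\) this decay is run through a wavelet/tensor decomposition of the multiplier, as in the He--Park argument. It is then interpolated with a crude bound losing only a power of \(l\), which yields (a) in the full range.
\item[(b)] An \(L^1\) estimate with polynomial loss: \(\norm{T_l[\omega]}_{L^{p_1}\times L^{p_2}\to L^p}\lesssim l^{A-1}\norm{\omega}_{L^1}\), with \(A-1=\max\{1/p_1,1/p_2,1/p'\}\). The idea is that each \(K^l_j\) is pointwise dominated by \(|K_j|*\)(an \(L^1\)-normalized bump at scale \(2^{j-l}\)). Hence \(T_l[\omega]\) is controlled by rough maximal-type operators with norm \(\lesssim\norm{\omega}_{L^1}\), provided the sum over \(j\) can be handled. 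That sum is organized by frequency localization of the inputs and output. Only \(O(l)\) values of \(j\) interact with a fixed frequency annulus of each function. The resulting square-function/Rademacher estimates lose \(l^{1/p_1}\), \(l^{1/p_2}\) or \(l^{1/p'}\), according to which of the three functions (by duality) carries the highest frequency.
\end{itemize}

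Granting (a) and (b), fix \(\epsilon\) with \(\epsilon<\delta\). For each \(l\ge1\) split
\[
\Omega=\Omega\,\one_{\{|\Omega|\le 2^{\epsilon l}\}}+\Omega\,\one_{\{|\Omega|>2^{\epsilon l}\}},
\]
and note that \(T_l\) is linear in \(\Omega\). Apply (a) to the first piece and (b) to the second. This gives
\[
\norm{T^{(n)}_\Omega}\lesssim \norm{\Omega}_{L^1}+\sum_{l\ge1}2^{-(\delta-\epsilon)l}+\sum_{l\ge1}l^{A-1}\int_{\{|\Omega|>2^{\epsilon l}\}}|\Omega|\,d\sigma_n .
\]
Interchanging sum and integral, \(\sum_{l\le \epsilon^{-1}\log_2|\Omega|}l^{A-1}\approx\log^{A}(e+|\Omega|)\), so the last term is \(\lesssim\int|\Omega|\log^A(e+|\Omega|)\,d\sigma_n\). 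Homogeneity, normalizing \(\norm{\Omega}_{L(\log L)^A}=1\), then gives the stated bound, including the case \(p=1\), where \(1/p'=0\).

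I expect the main obstacle to be (b), specifically obtaining a loss that is only polynomial in \(l\), with the exact exponent \(\max\{1/p_1,1/p_2,1/p'\}\), for a rough \(\omega\in L^1\) and in the quasi-Banach-free range \(p\ge1\). I would first try to prove it at the three vertices where one of the exponents \(p_1,p_2,p'\) is \(\infty\)-like, or near \(2\). There I would use the Chang--Wilson--Wolff/Rademacher argument so that the \(O(l)\) interacting scales cost exactly \(l^{1/r}\) in \(L^r\). Multilinear interpolation over the triangle would then produce the stated maximum.
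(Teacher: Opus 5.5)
This statement is not proved in the paper. It is Theorem~3 of Dosidis--Park--Slav\'ikova, quoted only as background, so there is no in-paper argument to compare with and your proposal has to stand on its own.

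\textbf{What is sound.} Your outer layer is correct and standard. For \(l\ge1\) each \(K_j^l\) has frequency support in \(|\xi|\sim2^{l-j}\), so \(T_l[\omega]\) needs no cancellation of \(\omega\), and the truncations \(\Omega\one_{\{|\Omega|\le2^{\epsilon l}\}}\) are admissible. An \(L^\infty\) bound with decay \(2^{-\delta l}\) plus an \(L^1\) bound with loss \(l^{a}\) then sums to \(\int|\Omega|\log^{1+a}(e+|\Omega|)\,d\sigma_n\). This is where the ``\(1+\)'' in \(A\) comes from.

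\textbf{The gap.} Everything specific to the theorem is the exponent \(a=\max\{1/p_1,1/p_2,1/p'\}\) in (b). You only assert it, and the route you sketch would not produce it.
\begin{itemize}
\item The domination \(|K_j^l|\le|K_j|*(\text{bump})\) discards the orthogonality in \(j\). Summing absolute values over \(j\in\mathbb Z\) rebuilds a kernel of size \(|\Omega(y/|y|)|\,|y|^{-2n}\), which is integrable neither at \(0\) nor at \(\infty\).
\item Exploiting the frequency localization instead leads to maximal and square functions shifted by about \(2^l\). The power of \(l\) is a power of \(\log 2^l\), not a count of ``\(O(l)\) interacting scales''; a fixed annulus of the lowest-frequency input meets infinitely many \(j\).
\item Treated crudely, as in Lemma~\ref{lem:poly-growth}, this costs \(l^2\) and gives only \(A=3\).
\item Interpolation cannot repair the exponent. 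Along an interpolation segment the exponent of \(l\) is affine. On the boundary of the exponent triangle, where one of \(p_1,p_2,p'\) is infinite, the target \(\max\{1/p_1,1/p_2,1/p'\}\) is at least \(1/2\). It equals \(1\) at the vertices, which are not even admissible. Anything interpolated from such anchor points gives at best \(l^{1/2}\) at \(p_1=p_2=p'=3\), i.e.\ \(A=3/2\) instead of \(4/3\).
\end{itemize}

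\textbf{The missing idea.} What you need is a configuration-by-configuration treatment of the trilinear form.
\begin{itemize}
\item Using translation invariance in \(x\) separately at each scale, move the shift off the lowest-frequency input. That input is then controlled by the unshifted maximal function at no cost.
\item The two high-frequency inputs are controlled by shifted square functions, whose \(L^r\) norm is bounded by a constant times \(\log(e+|\sigma|)^{|1/r-1/2|}\), with \(|\sigma|\sim2^l\).
\item The configuration with a given low input therefore costs \(l^{|1/r_b-1/2|+|1/r_c-1/2|}\), where \(r_b,r_c\) are the exponents of the two high inputs.
\item If \(1/p_1,1/p_2,1/p'\le1/2\), this exponent equals the reciprocal exponent of the low input. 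If one of the three reciprocals exceeds \(1/2\), the worst configuration costs exactly that one.
\item Hence the worst case is exactly \(\max\{1/p_1,1/p_2,1/p'\}\). Note that it is the lowest-frequency function, not the highest, whose exponent enters.
\end{itemize}
Proving that shifted square-function bound, with BMO/\(H^1\) substitutes when an input lies in \(L^\infty\) or \(p=1\), is the real content, and your proposal does not supply it.

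\textbf{The same endpoint issue affects (a).} Interpolating the \(L^2\times L^2\to L^1\) decay with a crude bound never gives decay on the edges \(p_1=\infty\) or \(p_2=\infty\). Those edges need a separate argument, for instance through the adjoint forms.
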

	
	In the case \(n=1\), Bhojak and Shrivastava
	\cite[Theorems~1.1 and~1.3]{BhojakShrivastava2025} gave an alternative
	local-Fourier-series proof of the \(L^q\) bounds in
	Theorem~\ref{thm:prior-B}.  For finite exponents, their method also gives the
	corresponding Orlicz bound with logarithmic exponent
	$
	A=1+\max\left\{
	\frac1{p_1},\frac1{p_2},\frac1{p'}
	\right\}.
	$
	This still leaves the \(L\log L\) endpoint outside the known theory.
	
	We now return to \(n=1\) and write \(T_\Omega\) for \(T_\Omega^{(1)}\).
	Hereafter, the finite-exponent Banach range means
	\[
	1<p_1,p_2,p<\infty,
	\qquad
	\frac1p=\frac1{p_1}+\frac1{p_2}.
	\]
	The preceding results approach the endpoint from above, but they do not reveal
	the one-dimensional endpoint structure.  Our motivation comes from the
	following three considerations.
	\begin{itemize}
		\item Existing \(L^q\) estimates require \(q>1\), whereas the near-\(L^1\)
		estimate in Theorem~\ref{thm:prior-C} assumes
		\(\Omega\in L(\log L)^A(\Sph)\) with
		\[
		A\ge 1+\max\left\{
		\frac1{p_1},\frac1{p_2},\frac1{p'}
		\right\}.
		\]
		It is therefore natural to ask whether \(\Omega\in L\log L(\Sph)\) suffices
		on the line, and whether the exponent \(1\) is the exact Orlicz threshold.
		
		\item In one dimension, rotations convert the operator into an angular
		multiplier problem.  The endpoint question then becomes a question about the
		exact variation of the finite-part angular profile generated by \(\Omega\).
		
		\item Directional estimates in \cite{DQX0404} rely on a positive power gain
		when \(a>1/2\).  At \(a=1/2\) this gain disappears, suggesting a critical
		logarithmic replacement and a comparison with the Orlicz endpoint.
	\end{itemize}
	
	\subsection{Main results}
	
	We now state our results.  The first theorem is the structural input behind
	the rotational approach.  Parametrize \(\Sph\) by
	\(\theta(t)=(\cos t,\sin t)\), set \(q(t)=\Omega(\theta(t))\), and write
	\(q_e(t)=(q(t)+q(t+\pi))/2\), \(q_o(t)=(q(t)-q(t+\pi))/2\).  The finite-part
	formula for the rough singular integral identifies its multiplier with the
	angular profile
	\[
	m_\Omega(\xi,\eta)=\cM_\Omega(t)
	\quad\text{whenever}\quad
	\frac{(\xi,\eta)}{\abs{(\xi,\eta)}}=(\cos t,\sin t),
	\]
	where
	\[
	\cM_\Omega(t)
	:=
	c_{\rm rad}\int_{\T}q(\varphi)
	\left[
	-\log|\cos(\varphi-t)|
	-\frac{i\pi}{2}\sgn(\cos(\varphi-t))
	\right]\,d\sigma(\varphi),
	\]
	and \(c_{\rm rad}>0\) is fixed by the polar-coordinate normalization.  Thus the
	roughness of the kernel is transferred to the variation of a one-dimensional
	angular profile.  Once \(\cM_\Omega\in BV\), the rotational decomposition for
	angular multipliers applies to \(m_\Omega\).
	
	The following theorem gives the exact characterization.  Here \(H_\T\) denotes the
	periodic Hilbert transform.  We use the norm
	\[
	\norm F_{H^1(\T)}
	:=
	\norm F_{L^1(\T)}+\norm{H_\T F}_{L^1(\T)},
	\]
	where \(H_\T F\) is initially understood in the sense of distributions, and
	transport this norm to \(H^1(\Sph)\) through the above parametrization.  For an
	\(L^1\) profile, membership in \(BV(\Sph)\) means that its almost-everywhere
	defined equivalence class admits a \(BV\) representative.
	
	\begin{theorem}
		\label{thm:profile-characterization}
		Let \(\Omega\in L^1(\Sph)\) have mean zero. Then
		\[
		\cM_\Omega\in BV(\Sph)
		\quad\Longleftrightarrow\quad
		H_\T q_e\in L^1(\T).
		\]
		Equivalently,
		\[
		\cM_\Omega\in BV(\Sph)
		\quad\Longleftrightarrow\quad
		\Omega_e\in H^1(\Sph).
		\]
		More precisely,
		\[
		\Var_{\Sph}(\cM_\Omega)
		\simeq
		\norm{H_\T q_e}_{L^1(\T)}
		+
		\norm{q_o}_{L^1(\T)},
		\]
		with constants depending only on the normalizations.
	\end{theorem}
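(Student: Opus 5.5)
Write \(\cM_\Omega = c_{\rm rad}(q * K)\) on \(\T\), a periodic convolution in \(t\), where
\[
K(s)=-\log\abs{\cos s}-\tfrac{i\pi}{2}\sgn(\cos s).
\]
Since \(q\mapsto \cM_\Omega\) is linear, the plan is to differentiate in the sense of distributions and analyse \(K'\). We split \(q=q_e+q_o\). Both pieces of \(K\) are \(\pi\)-periodic, so \(q_o * K\) vanishes identically except through the odd-in-shift structure. More precisely, the \(\pi\)-periodicity of \(\log\abs{\cos}\) kills \(q_o\) in the real part. Meanwhile \(\sgn(\cos s)\) is \(\pi\)-antiperiodic, so it kills \(q_e\) in the imaginary part. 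Hence
\[
\cM_\Omega = c_{\rm rad}\Bigl[-q_e*\log\abs{\cos(\cdot)} \;-\; \tfrac{i\pi}{2}\, q_o*\sgn(\cos(\cdot))\Bigr],
\]
and the variation of a complex function is comparable to the sum of the variations of its real and imaginary parts. It therefore suffices to show
\[
\Var(q_e*\log\abs{\cos})\simeq\norm{H_\T q_e}_{L^1}
\quad\text{and}\quad
\Var(q_o*\sgn\cos)\simeq\norm{q_o}_{L^1}.
\]

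\emph{Imaginary part.} The distributional derivative of \(\sgn(\cos s)\) is \(-2\delta_{\pi/2}+2\delta_{-\pi/2}\). Thus
\[
\frac{d}{dt}\,(q_o*\sgn\cos)(t)=-2q_o(t-\pi/2)+2q_o(t+\pi/2)=4q_o(t+\pi/2),
\]
using antiperiodicity of \(q_o\). This is an \(L^1\) function, so the total variation equals \(4\norm{q_o}_{L^1}\) exactly, in both directions. The integral \(\int q_o\) vanishes automatically, so no periodicity obstruction arises.

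\emph{Real part.} Write \(\log\abs{\cos s}=\log\abs{\sin(s+\pi/2)}\). Use the classical identity that \(\frac{d}{ds}\log\abs{\sin s}=\cot s\) in the principal value sense, with \(\frac1{2\pi}\pv\cot(s/2)\) the conjugate function kernel. Since \(\log\abs{\sin s}=\log\abs{\sin(s/2)}+\log\abs{\cos(s/2)}+\log 2\), convolving \(q_e\) with \(\cot s\) expresses the derivative as a combination of \(H_\T q_e\) evaluated at \(t\) shifted by \(\pi/2\) and \(t+\pi/2+\pi\). Here \(\pi\)-periodicity of \(q_e\) (and hence of \(H_\T q_e\)) collapses the two terms into a single one, giving
\[
\frac{d}{dt}\,(q_e*\log\abs{\cos})=c\,(H_\T q_e)(t+\pi/2)
\]
as distributions, with \(c\neq0\). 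I will verify this on Fourier coefficients to avoid p.v. issues. The \(k\)-th coefficient of \(\log\abs{\cos s}\) is \(\propto (-1)^{k/2}/\abs k\) for even \(k\neq0\) and zero for odd \(k\). So the derivative has multiplier \(\propto i\,\sgn(k)(-1)^{k/2}\) on even frequencies, which is exactly \(-i\sgn k\) composed with the translation by \(\pi/2\) on the support of \(\widehat{q_e}\). This Fourier computation is the cleanest route and also handles merely \(L^1\) data.

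\emph{Closing the argument.} A periodic distribution whose derivative is a finite measure \(\mu\) is BV with variation \(\abs{\mu}(\T)\). For the implication "\(\Rightarrow\)", if \(\cM_\Omega\) has a BV representative, its derivative is a finite measure. Its imaginary part is \(4q_o(\cdot+\pi/2)\in L^1\), already fine, and its real part is a translate of \(H_\T q_e\). The main obstacle is to upgrade "\(H_\T q_e\) is a finite measure" to "\(H_\T q_e\in L^1\)". Here I would invoke the F.\&M. Riesz theorem. If \(\nu=H_\T q_e\) is a measure with \(q_e\in L^1\), then \(q_e+i\nu\) (modulo the mean) is a measure with vanishing negative Fourier coefficients, so it is absolutely continuous. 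Hence \(\nu\in L^1\), and \(\Var\) equals a constant times \(\norm{H_\T q_e}_{L^1}\). The converse direction is immediate from the derivative formulas. Finally, \(\Omega_e\in H^1\) is equivalent to \(q_e,H_\T q_e\in L^1\), which gives the restatement, and the two-sided norm comparison follows from the exact derivative identities.
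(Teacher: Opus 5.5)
Your proposal is correct and follows the paper's route. You split $\cM_\Omega$ into the logarithmic piece driven by $q_e$ and the sign piece driven by $q_o$, identify their derivatives with translates of $H_\T q_e$ and $q_o$, and use the F.~and M.~Riesz theorem to upgrade the finite measure $H_\T q_e$ to an $L^1$ function; your Fourier-coefficient verification is a clean substitute for the paper's $DK_0=\pv\tan t$ identity.

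The one point to adjust is that separating the two pieces via real and imaginary parts tacitly assumes $\Omega$ is real-valued, which the paper does not assume. The paper instead applies the antipodal projections $P_e,P_o$: these do not increase variation and isolate the $\pi$-periodic logarithmic piece and the $\pi$-antiperiodic sign piece, so the argument also covers complex-valued $\Omega$.
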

	
	This characterization is tailored to the method of rotations.  In the
	bilinear setting, Diestel, Grafakos, Honz\'ik, Si, and Terwilleger
	\cite{DGHST2011} combined the \(H^1\)-regularity of the even part with uniform
	bilinear Hilbert transform estimates in a restricted exponent region.
	Combining Theorem~\ref{thm:profile-characterization} with a \(BV\)/Stieltjes
	decomposition of angular multipliers and the uniform bilinear Hilbert transform
	theorem of Uraltsev and Warchalski \cite{UraltsevWarchalski2022} yields a
	criterion throughout the finite-exponent Banach range.
	
	\begin{theorem}
		\label{thm:rotational-main}
		Let \(\Omega\in L^1(\Sph)\) have mean zero, and write
		\(\Omega=\Omega_e+\Omega_o\) for its antipodal even and odd parts. If 
		\(\Omega_e\in H^1(\Sph)\), then, for every
		$
		1<p_1,p_2<\infty,\,
		1<p<\infty,\,
		\frac1p=\frac1{p_1}+\frac1{p_2},
		$
		the principal-value operator \(T_\Omega\) extends boundedly from
		\(L^{p_1}(\R)\times L^{p_2}(\R)\) to \(L^p(\R)\), and
		\[
		\norm{T_\Omega(f,g)}_{L^p}
		\le
		C_{p_1,p_2}
		\bigl(\norm{\Omega_e}_{H^1(\Sph)}
		+\norm{\Omega_o}_{L^1(\Sph)}\bigr)
		\norm f_{L^{p_1}}\norm g_{L^{p_2}}.
		\]
	\end{theorem}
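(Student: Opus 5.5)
Our plan is to reduce Theorem~\ref{thm:rotational-main} to a superposition of bilinear Hilbert transforms with uniformly bounded weights, and then to invoke the uniform estimates of Uraltsev and Warchalski.

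\textbf{Step 1: control the profile.} By Theorem~\ref{thm:profile-characterization}, the finite-part profile \(\cM_\Omega\) has bounded variation, with
\[
\Var_{\Sph}(\cM_\Omega)\lesssim \norm{H_\T q_e}_{L^1(\T)}+\norm{q_o}_{L^1(\T)}\lesssim \norm{\Omega_e}_{H^1(\Sph)}+\norm{\Omega_o}_{L^1(\Sph)}.
\]
We also need \(\norm{\cM_\Omega}_{L^\infty}\) bounded by the same quantity. The antipodal odd part contributes \(-\frac{i\pi}{2}\,\sgn(\cos(\varphi-t))\) tested against \(q\), which is at most \(\norm{q_o}_{L^1}\). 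The even part contributes the logarithmic term tested against \(q_e\). Since \(q_e\) has mean zero and \(\log|\cos|\) has a derivative that is essentially a conjugate-function kernel, this term is bounded by \(\norm{H_\T q_e}_{L^1}\). Alternatively, a mean-zero \(BV\) function is bounded by its variation once the mean is controlled, and here the mean vanishes because \(\Omega\) has mean zero.

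\textbf{Step 2: Stieltjes decomposition.} On the circle we write
\[
\cM_\Omega(t)=\cM_\Omega(t_0)+\int_{(t_0,t]} d\cM_\Omega(s).
\]
Equivalently, the multiplier \(m_\Omega(\xi,\eta)\) becomes a constant plus \(\int \one_{\{\arg(\xi,\eta)\in(t_0,s]\}}\,d\mu(s)\), where \(\mu\) is a measure of total mass \(\Var(\cM_\Omega)\). Each indicator of an angular sector is a combination of products of two half-plane indicators \(\one_{\{\xi\cos s+\eta\sin s>0\}}\). The constant term is harmless. A single half-plane multiplier \(\one_{\{\alpha\xi+\beta\eta>0\}}\) is a linear combination of the product \(f\cdot g\) and the bilinear Hilbert transform with direction \((\alpha,\beta)\).

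Sectors, however, are products of two such half-planes. To avoid this we work on the half-circle: we decompose the profile as a Stieltjes integral of step functions of \(t\) modulo \(\pi\), each of which is a single half-plane indicator up to the sign symmetry. Concretely, we write \(\cM_\Omega(t)=c+\int \sgn(\sin(t-s))\,d\nu(s)\) with \(\norm{\nu}\lesssim\Var(\cM_\Omega)\). This is possible for any \(BV\) function on \(\T\) after subtracting a suitable antiperiodic correction; we will handle this correction by pairing the jump at \(s\) with the jump at \(s+\pi\).

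\textbf{Step 3: apply uniform bounds.} Each term \(\sgn(\xi\sin s-\eta\cos s)\) is the multiplier of a bilinear Hilbert transform \(H_{s}\). By Uraltsev--Warchalski, \(\norm{H_s}_{L^{p_1}\times L^{p_2}\to L^p}\le C_{p_1,p_2}\) uniformly in \(s\) throughout the Banach range. This includes the degenerate directions, where \(H_s\) reduces to \(Hf\cdot g\), \(f\cdot Hg\), or \(H(fg)\). Minkowski's integral inequality over \(d\nu\) then gives the estimate with constant \(\norm{\nu}+|c|\). Finally we justify that \(T_\Omega\) on Schwartz functions coincides with the multiplier operator \(m_\Omega\); this is the finite-part formula assumed in the text, approximating \(\Omega\) by smooth kernels if needed. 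Density then yields the extension.

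\textbf{Main obstacle.} The hardest step is Step 2: producing a single-half-plane Stieltjes representation with measure norm controlled by \(\Var(\cM_\Omega)\). The difficulty is that a general \(BV\) profile on the full circle is not \(\pi\)-antiperiodic. The remedy we will try first is to split \(\cM_\Omega\) into its \(\pi\)-periodic part, which comes from \(q_e\), and its \(\pi\)-antiperiodic part, which comes from \(q_o\).

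The antiperiodic part is directly a superposition of \(\sgn(\sin(t-s))\). The periodic part is constant on opposite rays and may require products of two half-plane indicators. To treat those, we would use \(\sgn(\sin(t-s))\,\sgn(\sin(t-s'))\), whose multiplier is a composition that is not a single bilinear Hilbert transform. A better route is to represent the periodic part via the linear parameters \(\xi\) and \(\eta\) separately, absorbing the remainder into linear Hilbert transforms applied to \(f\), \(g\), or the product.
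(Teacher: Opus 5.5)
Your Step 1 is fine. The vanishing mean of \(\cM_\Omega\), which follows from \(\int_{\Sph}\Omega\,d\sigma=0\), is a legitimate substitute for the paper's \(L^1\) bound on the profile. Step 2, however, has a genuine gap, located exactly where the hypothesis \(\Omega_e\in H^1(\Sph)\) enters. Every function \(t\mapsto\sgn(\sin(t-s))\) is \(\pi\)-antiperiodic. Hence \(D_t\int\sgn(\sin(t-s))\,d\nu(s)\) is an antiperiodic measure, while the derivative of a \(\pi\)-periodic profile is a \(\pi\)-periodic measure. So the only \(\pi\)-periodic functions of the form \(c+\int\sgn(\sin(\cdot-s))\,d\nu(s)\) are constants, and no pairing of the jumps at \(s\) and \(s+\pi\) changes this. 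Your claim that the representation holds for any \(BV\) profile ``after subtracting a suitable antiperiodic correction'' is therefore false. The obstruction is the \(\pi\)-periodic component \(c_{\rm rad}L_e\) of \(\cM_\Omega\), the logarithmic term generated by \(q_e\), which is nonconstant whenever \(\Omega_e\ne0\). Your argument does handle the antiperiodic component \(-c_{\rm rad}\frac{i\pi}2S_o\), i.e.\ the operator \(T_{\Omega_o}\). For \(T_{\Omega_e}\) you only observe that products \(\sgn(\sin(t-s))\sgn(\sin(t-s'))\) are not single bilinear Hilbert transforms, and then mention a ``better route'' without producing a decomposition with controlled measure norm.

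The missing idea is to let one of the two half-planes depend on a single frequency variable, so that it acts as a linear operator on one input. The paper does this with a smooth partition of unity into four charts. On the right chart the symbol is \(\one_{\{\xi>0\}}b(\eta/\xi)\) with \(b\in BV(\R)\). Writing \(b(s)=b(-\infty)+\nu((-\infty,s])\) turns it into \(b(-\infty)\one_{\{\xi>0\}}+\int\one_{\{\xi>0\}}\one_{\{\eta-t\xi>0\}}\,d\nu(t)\). Each integrand acts as \(\Pi_t(P_+f,g)\): a Fourier projection on \(f\) followed by a single half-plane multiplier, bounded uniformly in \(t\) by Lemma~\ref{lem:line-sign}.

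In your notation the same idea is even shorter. If \(P\) is \(\pi\)-periodic and \(BV\), then \(A(t)=\sgn(\cos t)P(t)\) is \(\pi\)-antiperiodic with \(\Var(A)\le\Var(P)+4\norm{P}_\infty\). So \(A=\int\sgn(\sin(\cdot-s))\,d\nu(s)\) with \(\norm{\nu}\le\frac12\Var(A)\). Since \(\sgn(\cos t)=\sgn\xi\), one gets \(T_{m_P}(f,g)=\int T_{v_s}(\widetilde Hf,g)\,d\nu(s)\), with \(v_s=(-\sin s,\cos s)\) and \(\widetilde H\) the multiplier \(\sgn\xi\). This is precisely where your \(L^\infty\) bound from Step 1 is needed.

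Finally, Lemma~\ref{lem:finite-part} identifies the multiplier only for bounded \(\Omega\), so your last step needs two things. First, approximants that preserve parity and mean zero and converge in \(\norm{\Omega_e}_{H^1}+\norm{\Omega_o}_{L^1}\); convolution with a smooth approximate identity works, since it commutes with \(H_\T\) and with \(t\mapsto t+\pi\). Second, an identification of the operator-norm limit with the principal-value operator, which the paper obtains from the \(L^1\)-continuity of the finite-part distribution.
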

	
	The \(L\log L\) result follows from Theorem \ref{thm:rotational-main} and the
	Riesz--Zygmund theorem for conjugate functions \cite{Zygmund1959}.  Recall that
	\(L\log L=L(\log L)^1\) for the Orlicz scale introduced above.
	
	\begin{theorem}\label{thm:LlogL-main}
		Let
		$
		1<p_1,p_2<\infty,\,
		1<p<\infty,\,
		\frac1p=\frac1{p_1}+\frac1{p_2}.
		$
		If  \(\Omega\in L\log L(\Sph)\) has mean zero, then
		\(T_\Omega\) extends to a bounded bilinear map
		$
		T_\Omega:L^{p_1}(\R)\times L^{p_2}(\R)\to L^p(\R),
		$
		and
		\[
		\norm{T_\Omega(f,g)}_{L^p}
		\le
		C_{p_1,p_2}\norm{\Omega}_{L\log L(\Sph)}
		\norm f_{L^{p_1}}\norm g_{L^{p_2}}.
		\]
	\end{theorem}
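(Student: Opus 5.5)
Theorem~\ref{thm:LlogL-main} will be deduced from Theorem~\ref{thm:rotational-main}. What we need is the estimate
\[
\norm{\Omega_e}_{H^1(\Sph)}+\norm{\Omega_o}_{L^1(\Sph)}\lesssim \norm{\Omega}_{L\log L(\Sph)},
\]
where \(\Omega_e,\Omega_o\) are the antipodal even and odd parts. Once this is established, the conclusion follows directly from the bound in Theorem~\ref{thm:rotational-main}, and the constant depends only on \(p_1,p_2\).

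First I will handle the two elementary pieces, the odd part and the \(L^1\) part of the \(H^1\) norm. Rotation by \(\pi\) preserves normalized surface measure, so \(\norm{q(\cdot+\pi)}_{L\log L}=\norm{q}_{L\log L}\). The Luxemburg norm is a norm, and hence \(\norm{q_e}_{L\log L},\norm{q_o}_{L\log L}\le\norm q_{L\log L}\). Since \(\sigma\) is a probability measure and \(\Phi_1(t)\ge t\), we have the embedding \(\norm{F}_{L^1}\le C\norm F_{L\log L}\). This gives \(\norm{\Omega_o}_{L^1}+\norm{\Omega_e}_{L^1}\lesssim\norm\Omega_{L\log L}\). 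The mean-zero hypothesis carries over to \(q_e\), since its mean equals the mean of \(q\), which is zero. This matters because the conjugate function estimate we want applies to mean-zero functions, or at worst costs an additional \(L^1\) norm.

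The main step is the Riesz--Zygmund theorem on the circle \cite{Zygmund1959}: if \(F\in L\log L(\T)\), then \(H_\T F\in L^1(\T)\) and \(\norm{H_\T F}_{L^1}\le C\bigl(1+\int_\T |F|\log^+|F|\,d\sigma\bigr)\). This is the modular, inhomogeneous form of the theorem, so it has to be converted into a norm inequality. I will do this by homogeneity. Set \(\lambda=\norm{q_e}_{L\log L}\), which we may assume nonzero, and apply the theorem to \(F=q_e/\lambda\). By the definition of the Luxemburg norm, \(\int\Phi_1(|F|)\,d\sigma\le1\), so the modular quantity is bounded by an absolute constant. This yields \(\norm{H_\T F}_{L^1}\le C\), and by linearity of \(H_\T\), \(\norm{H_\T q_e}_{L^1}\le C\lambda\le C\norm\Omega_{L\log L}\). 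Here \(H_\T q_e\) is a priori only a distribution, but since \(q_e\in L^1\) and Zygmund's theorem identifies \(H_\T q_e\) with the a.e.\ defined conjugate function in \(L^1\), this is consistent with the definition of the \(H^1(\T)\) norm given before Theorem~\ref{thm:profile-characterization}.

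Combining the two steps gives \(\Omega_e\in H^1(\Sph)\) with \(\norm{\Omega_e}_{H^1}\lesssim\norm\Omega_{L\log L}\), and Theorem~\ref{thm:rotational-main} then gives the stated bound. I expect the only real obstacle to be bookkeeping: checking that the Orlicz normalization with \(\Phi_1(t)=t\log(e+t)\) dominates \(t\log^+t\), which holds since \(\log(e+t)\ge\log^+ t\), and that transporting from \(\Sph\) to \(\T\) through \(\theta(t)\) preserves these norms, which holds because the measures are normalized identically.
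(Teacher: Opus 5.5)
Your proof is correct and follows the paper's route. The paper bounds \(\norm{\Omega_e}_{L\log L(\Sph)}+\norm{\Omega_o}_{L^1(\Sph)}\) by \(\norm{\Omega}_{L\log L(\Sph)}\) using the antipodal projections and then invokes Corollary~\ref{cor:parity-endpoint}, which is exactly your combination of the Riesz--Zygmund estimate with Theorem~\ref{thm:rotational-main}; your homogeneity argument converting the modular form of Zygmund's theorem into the norm inequality of Lemma~\ref{lem:zygmund} is simply a detail the paper leaves implicit.
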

	
	\begin{theorem}\label{thm:Orlicz-sharp}
		Fix exponents as in Theorem \ref{thm:LlogL-main}, and let \(A_*(p_1,p_2)\)
		denote the infimum of all \(A\ge0\) such that
		\[
		\norm{T_\Omega}_{L^{p_1}\times L^{p_2}\to L^p}
		\le
		C_A\norm{\Omega}_{L(\log L)^A(\Sph)}
		\]
		for all mean-zero \(\Omega\in L(\log L)^A(\Sph)\). Then
		$
		A_*(p_1,p_2)=1.
		$
	\end{theorem}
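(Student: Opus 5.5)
Two inequalities are needed: \(A_*\le1\) and \(A_*\ge1\). For \(A_*\le1\), Theorem~\ref{thm:LlogL-main} gives the estimate with \(A=1\). Since \(\Phi_A(t)\ge c\,\Phi_1(t)\) for \(A\ge1\), we have \(\norm{\Omega}_{L\log L}\lesssim\norm{\Omega}_{L(\log L)^A}\), so every \(A\ge1\) is admissible and \(A_*\le1\).

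For \(A_*\ge1\), fix \(A<1\) and argue by contradiction. Suppose \(\norm{T_\Omega}\le C_A\norm{\Omega}_{L(\log L)^A}\) for all mean-zero \(\Omega\in L(\log L)^A\). The route goes through Theorem~\ref{thm:profile-characterization}. A bounded bilinear operator on \(L^{p_1}\times L^{p_2}\to L^p\) that commutes with translations has a multiplier \(m\), and we expect \(\norm m_{L^\infty}\lesssim\norm{T}\). This is the standard bilinear analogue of the linear fact: test on modulated wave packets, or use the known result that bounded bilinear translation-invariant operators have bounded symbols. Here the multiplier is \(m_\Omega=\cM_\Omega\) (homogeneous of degree \(0\)), so we would get \(\norm{\cM_\Omega}_{L^\infty}\lesssim C_A\norm\Omega_{L(\log L)^A}\).

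The task is then to find mean-zero, even \(\Omega\) with \(\Omega\in L(\log L)^A\) but \(\cM_\Omega\notin L^\infty\); using a sequence with bounded Orlicz norm and \(\sup|\cM_\Omega|\to\infty\) makes the argument quantitative. For even \(q\) the real part of \(\cM_\Omega\) is \(c\int q(\varphi)(-\log|\cos(\varphi-t)|)\,d\varphi\), a convolution with a kernel having logarithmic singularities at \(\varphi=t\pm\pi/2\). Take \(q\) even, nonnegative, and concentrated near \(\pi/2\) (and its antipode), with profile \(q(\varphi)=|\varphi-\pi/2|^{-1}\log^{-\gamma}(1/|\varphi-\pi/2|)\) near \(\pi/2\). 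Subtract a smooth even bump supported away from the singularity to enforce mean zero; this does not affect boundedness. Then:
\[
q\in L(\log L)^A \iff \int\frac{\log^{A-\gamma}(1/s)}{s}\,ds<\infty \iff \gamma>A+1,
\]
and
\[
\operatorname{Re}\cM_\Omega(0)\approx\int\frac{\log(1/s)}{s\log^\gamma(1/s)}\,ds=\infty \iff \gamma\le2.
\]
For \(A<1\) choose \(\gamma\in(A+1,2]\). This gives \(\Omega\in L(\log L)^A\) with \(\cM_\Omega(0)=+\infty\), or more precisely \(\cM_\Omega\) essentially unbounded near \(t=0\), since lower semicontinuity gives blow-up on a neighborhood in the limit. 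Truncating at level \(\epsilon\) produces bounded kernels with uniformly bounded \(L(\log L)^A\) norm and \(\norm{\cM_{\Omega_\epsilon}}_\infty\to\infty\), contradicting the assumed bound. Hence \(A_*\ge1\).

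The main obstacle is rigorously justifying \(\norm{m}_{L^\infty}\lesssim\norm{T}_{L^{p_1}\times L^{p_2}\to L^p}\) for bilinear operators. A cheaper route is to test directly with \(f=g=\) a modulated Gaussian at frequency \(N(\cos t,\sin t)\) dilated to scale \(R\to\infty\). Then \(T(f,g)\approx m(N\cos t, N\sin t)\,f g\) up to errors controlled by the continuity of \(\cM_{\Omega_\epsilon}\) for truncated bounded kernels, and comparing \(L^p\) norms gives \(|\cM_{\Omega_\epsilon}(t)|\lesssim\norm T\). I would carry out this test directly on the truncated kernels \(\Omega_\epsilon\), whose multipliers are continuous away from finitely many directions.
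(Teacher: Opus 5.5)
Your argument is correct, but it reaches $A_*\ge1$ through a different family of extremal kernels than the paper.

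\textbf{The paper's route.} It uses the two-arc atoms $\Omega_N=e^N(\one_{E_N}-\one_{F_N})$, with mass on an arc of size $\sim e^{-N}$ at $\pi/2$ and compensating mass at $0$. It computes $\norm{\Omega_N}_{L(\log L)^A}\simeq N^A$ exactly (Lemma~\ref{lem:orlicz-atom}). It then shows $\operatorname{Re}m_{\Omega_N}\ge cN$ on an explicit cone of aperture $\sim e^{-N}$ around $(1,0)$, and applies the testing Lemma~\ref{lem:testing} to a rectangle inside that cone. No continuity of the multiplier is used, and the failure comes with the explicit rate $N$ versus $N^A$.

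\textbf{Your route.} You fix one antipodally even profile $|\varphi-\pi/2|^{-1}\log^{-\gamma}(1/|\varphi-\pi/2|)$, and your exponent bookkeeping is right: $\gamma>A+1$ gives membership, and $\gamma\le2$ gives divergence of the pairing with $K_0$. In fact $\gamma=2$ works for every $A<1$ at once. So you exhibit a single kernel in $\bigcap_{A<1}L(\log L)^A$ whose angular profile is unbounded. This makes transparent that the threshold is the $L\log L$--$\exp L$ duality against the logarithmic singularity of $K_0$; the paper's version gives a quantitative growth rate instead.

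\textbf{The extra step you owe.} You need $\abs{\cM_{\Omega_\epsilon}(0)}\lesssim\norm{T_{\Omega_\epsilon}}$. Since your truncated kernels are even and bounded, $\cM_{\Omega_\epsilon}$ is real and continuous. Hence a small rectangle around $(1,0)$ carries $\operatorname{Re}m\ge\cM_{\Omega_\epsilon}(0)/2$, and Lemma~\ref{lem:testing} applies verbatim.

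Your own sketch of this step needs tightening in two places. First, $f$ and $g$ should be modulated to $\cos t$ and $\sin t$ respectively, not both to the full frequency vector. Second, ``comparing $L^p$ norms'' does not work as stated: a uniform bound on $T(f,g)-m_0fg$ does not control its $L^p$ norm when $m$ is merely continuous. Either pair with a third packet, as in Lemma~\ref{lem:testing}, or rescale to fixed $\phi,\psi$ and combine pointwise convergence with Fatou's lemma.

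Finally, the mean-zero bump must be rescaled with $\epsilon$. Its coefficient stays bounded, so the Orlicz norms of the $\Omega_\epsilon$ remain uniformly bounded and the contradiction goes through.
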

	
	Theorem \ref{thm:Orlicz-sharp} shows that the \(L\log L\) assumption cannot be
	weakened inside the scale \(L(\log L)^A\).  The proof tests the logarithmic
	singularity of the angular profile against thin angular atoms and converts the
	resulting multiplier lower bound into an operator-norm lower bound.  In this
	sense the first part of the paper identifies the exact Orlicz threshold.
	
	We next turn to directional assumptions.  In the linear theory, Grafakos and
	Stefanov \cite{GrafakosStefanov1998} introduced a logarithmic condition which,
	in dimension \(n\), has the form
	\begin{equation}\label{eq:GS-general-intro}
		\sup_{\xi\in\mathbb S^{n-1}}
		\int_{\mathbb S^{n-1}}
		|\Omega(\theta)|
		\left(\log \frac1{|\theta\cdot\xi|}\right)^{1+\alpha}
		\,d\sigma(\theta)<\infty,
		\qquad \alpha>0.
	\end{equation}
	For bilinear rough singular integrals, boundedness under this type of
	Grafakos--Stefanov condition remains open.  In the one-dimensional case, the
	fractional directional assumption
	\begin{equation}\label{eq:Ka-intro}
		\sup_{\xi\in\Sph}
		\int_{\Sph}
		\frac{|\Omega(\theta)|}{|\theta\cdot\xi|^a}
		\,d\sigma(\theta)<\infty,
		\qquad \frac12<a<1.
	\end{equation}
	is a useful substitute.  We denote the supremum on the left-hand side of
	\eqref{eq:Ka-intro} by \(\norm{\Omega}_{\cK_a}\).
	The range \(a>1/2\) was treated in \cite{DQX0404}.  At \(a=1/2\), the dyadic
	estimates lose the power decay available above the critical index.  We
	therefore insert a logarithmic factor at the critical power.  The resulting
	class is stronger than the \(\Sph\)-version of \eqref{eq:GS-general-intro}, but
	it is the critical analogue of \eqref{eq:Ka-intro}.
	
	\begin{definition}\label{def:Klog}
		For \(\beta>0\), define
		\[
		\norm{\Omega}_{\cK_{1/2,\beta}}
		:=
		\sup_{\xi\in \Sph}
		\int_{\Sph}
		\frac{|\Omega(\theta)|}{|\theta\cdot\xi|^{1/2}}
		\left(\log\frac1{|\theta\cdot\xi|}\right)^\beta
		\,d\sigma(\theta).
		\]
		We write \(\Omega\in\cK_{1/2,\beta}(\Sph)\) when this quantity is
		finite.
	\end{definition}
	
	\begin{theorem}\label{thm:Klog-main}
		Let
		$
		1<p_1,p_2<\infty,\,
		1<p<\infty,\,
		\frac1p=\frac1{p_1}+\frac1{p_2}.
		$
		Assume that \(\Omega\) has mean zero and belongs to
		\(\cK_{1/2,\beta}(\Sph)\). If
		$
		\beta>\frac32\max\{p_1,p_1',p_2,p_2'\}-1,
		$
		then \(T_\Omega\) extends boundedly from
		\(L^{p_1}(\R)\times L^{p_2}(\R)\) to \(L^p(\R)\), with
		\[
		\norm{T_\Omega(f,g)}_{L^p}
		\le
		C_{p_1,p_2,\beta}
		\norm{\Omega}_{\cK_{1/2,\beta}}
		\norm f_{L^{p_1}}\norm g_{L^{p_2}}.
		\]
	\end{theorem}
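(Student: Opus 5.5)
We plan to follow the scheme indicated in the abstract: endpoint Fourier decay, a product wavelet decomposition, and interpolation. This is the approach of \cite{DQX0404} for \(a>1/2\), with the power gain replaced by a logarithmic gain.

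\emph{Decomposition and Fourier decay.} Write \(K=\Omega(y/|y|)|y|^{-2}=\sum_{j\in\mathbb Z}K_j\), where \(K_j(y)=K(y)\phi(2^{-j}y)\) and \(\phi\) is a smooth radial dyadic bump. Rescaling reduces matters to \(K_0\). Its Fourier transform \(\widehat{K_0}(\zeta)\), for \(|\zeta|\sim 2^k\) with \(k\ge0\), is an angular integral of \(\Omega(\theta)\) against a one-dimensional oscillatory factor of the form \(\widehat{\phi_{\rm rad}}(|\zeta|\,\theta\cdot\zeta')\). Splitting the angular integral according to whether \(|\theta\cdot\zeta'|\lesssim 2^{-k}\) or not, we will show the endpoint bound
\[
|\widehat{K_0}(\zeta)|\lesssim \norm{\Omega}_{\cK_{1/2,\beta}}\,2^{-k/2}(1+k)^{-\beta}.
\]
On the region \(|\theta\cdot\zeta'|\gtrsim 2^{-k}\) this uses \((2^k|\theta\cdot\zeta'|)^{-N}\lesssim(2^k|\theta\cdot\zeta'|)^{-1/2}\). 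On the region \(|\theta\cdot\zeta'|\le 2^{-k}\) the weight \(|\theta\cdot\zeta'|^{-1/2}\log^\beta(1/|\theta\cdot\zeta'|)\ge 2^{k/2}k^\beta\) does the job. This is exactly the point where the \(2^{-\epsilon k}\) gain of the case \(a>1/2\) becomes the factor \(k^{-\beta}\). The same argument gives \(|\nabla^\alpha \widehat{K_0}(\zeta)|\) bounds with the same decay, and the mean-zero condition gives \(|\widehat{K_0}(\zeta)|\lesssim |\zeta|\,\norm{\Omega}_{L^1}\) near the origin.

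\emph{Wavelet pieces and two estimates.} Following Grafakos--He--Honz\'ik, we decompose the symbol \(m=\sum_j\widehat{K_0}(2^j\cdot)\) frequency-annularly as \(m=\sum_\mu m_\mu\), where \(m_\mu\) collects the pieces with \(|\zeta|\sim2^{\mu+j}\) relative to scale \(2^j\). Each \(m_\mu\) is then expanded in a tensor-product (Daubechies) wavelet basis on \(\R^2\), with coefficients \(b_{\omega}\) satisfying
\[
\norm{b}_{\ell^\infty}\lesssim 2^{-\mu/2}\mu^{-\beta}\quad\text{and}\quad \norm{b}_{\ell^2}\lesssim 2^{\mu/2}\norm{\widehat{K_0}}_{L^2(|\zeta|\sim 2^\mu)}.
\]
For each \(\mu\) we prove two bounds:
\begin{itemize}
\item an \(L^2\times L^2\to L^1\) estimate, which uses the \(\ell^\infty\) decay together with the standard separation of the wavelet sum into diagonal and off-diagonal parts (as in \cite{GrafakosHeHonzik2018}), in the form \(\lesssim \mu^{C}\,\mu^{-\beta}\) or \(\lesssim \mu^{-\beta+1/2}\);
\item a crude \(L^{p_1}\times L^{p_2}\to L^p\) bound with at most polynomial growth in \(\mu\), obtained from the rough Calder\'on--Zygmund/Coifman--Meyer theory for smooth-truncated pieces, of size \(\lesssim \mu\,\norm{\Omega}_{L^1}\).
\end{itemize}
Interpolating, presumably via complex or multilinear interpolation with weights \(\theta\) tied to the distance of \((1/p_1,1/p_2)\) from \((1/2,1/2)\), gives a bound of the form \(\mu^{1-\theta(\beta+1)}\cdot(\text{powers})\). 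Summability in \(\mu\) then requires \(\theta(\beta+1)>3/2\) roughly. With \(\theta\sim 1/\max\{p_i,p_i'\}\) this is the stated threshold \(\beta>\frac32\max\{p_1,p_1',p_2,p_2'\}-1\). We will tune the exponents so that this matches; the \(3/2\) most plausibly comes from \(\mu^{1/2}\) losses in the \(\ell^2\)-to-operator step plus one from the crude bound. Away from the local \(L^2\) point, the full Banach range is reached through duality (the adjoints are \(T_{\tilde\Omega}\) with rotated kernels in the same class, so the symmetric max appears) and interpolation.

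\emph{Main obstacle.} The hardest step is the \(L^2\times L^2\to L^1\) estimate for \(m_\mu\) with only logarithmic gain. In the \(a>1/2\) case a loss of \(\mu^{C}\) is harmless, but here every polynomial loss in \(\mu\) must be tracked precisely against \(\mu^{-\beta}\). We would first try to run the wavelet argument of \cite{GrafakosHeHonzik2018}, verifying that its losses are only \(\mu^{1/2}\), so that the final exponent bookkeeping produces exactly the constant \(\frac32\max\{\cdot\}-1\).
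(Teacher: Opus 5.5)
\textbf{There is a genuine gap, in the crude bound.} Your architecture matches the paper's: endpoint decay $2^{-j/2}j^{-\beta}$, product wavelets, an $L^2\times L^2\to L^1$ bound with polynomial loss, a crude bound with polynomial growth, multilinear interpolation, and summation. But the crude bound, which is half of the argument, is not delivered by the tools you name.

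\begin{itemize}
\item Coifman--Meyer gives exponential growth. $\widehat{K^0}$ is the Fourier transform of a compactly supported kernel, so its derivatives at $|\zeta|\sim 2^\mu$ are no smaller than $2^{-\mu/2}\mu^{-\beta}$. Each derivative of a dilated piece $\widehat{K^0_\mu}(2^i\zeta)$ therefore costs $2^{\mu}|\zeta|^{-1}$, and the Coifman--Meyer constant of $m_\mu$ grows like $2^{c\mu}$. The merely polynomial gain $\mu^{1-\beta}$ cannot absorb this after interpolation.
\item Standard bilinear Calder\'on--Zygmund theory fails for the same reason. For $\Omega$ only in $L^1$ or $\cK_{1/2,\beta}$, the smoothed kernels $K_\mu$ have pointwise Calder\'on--Zygmund constants that also grow exponentially in $\mu$. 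You do not say which bilinear theory with only integral (H\"ormander-type) kernel conditions would give $\mu\norm{\Omega}_{L^1}$.
\end{itemize}

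The missing ingredient is the paper's Lemma~\ref{lem:poly-growth}, which adapts the shifted maximal and square-function argument of Dosidis and Slav\'ikova. Each frequency-localized trilinear form is written as $\int K_j^0(y)\sum_k\int\prod_\ell f_\ell*(\phi_\ell)^{2^jy_\ell}_{2^{j-k}}\,dx\,dy$. It is bounded by shifted maximal and square functions whose norms grow like $\log(2+2^j|y_i|)\lesssim j\log(2+|y_i|)$ for $i=1,2$. One then checks that $\int|K_j^0(y)|\prod_i\log(2+|y_i|)\,dy\lesssim\norm{\Omega}_{L^1}$ uniformly in $j$. This gives $j^2\norm{\Omega}_{L^1}$ on the whole Banach range.

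\textbf{Your exponent bookkeeping is reverse-engineered and does not produce the threshold.}
\begin{itemize}
\item From a bound $\mu^{1-\theta(\beta+1)}$, summability requires $\theta(\beta+1)>2$, not $3/2$.
\item The admissible interpolation parameters are $\theta<\theta_*=2/\max\{p_1,p_1',p_2,p_2'\}$, the largest value keeping the auxiliary exponents $r_1,r_2,r$ in $(1,\infty)$, not $\theta\sim 1/\max\{\cdot\}$.
\item The $L^2\times L^2\to L^1$ loss is a full factor $j$, not $j^{1/2}$ (Proposition~\ref{prop:L2-endpoint}). In the sector where both wavelet indices are large, interpolating the coefficient bounds as $A_{\lambda,j}^{1-q/4}B_{\lambda,j,q}^{q/4}$ cancels $2^{-j/2}$ exactly against the area factor $2^{j/2}$. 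However, both inputs must be frequency-truncated along the dilation parameter $\gamma$. Each truncated family overlaps about $\lambda+j$ times, costing $(j(\lambda+1))^{1/2}$ per input, so the bound is $j^{1-\beta}$.
\end{itemize}

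With the correct numbers, interpolating $j^2$ against $j^{1-\beta}$ via Lemma~\ref{lem:sagher} gives $j^{2-\theta(\beta+1)}$. This is summable iff $\theta(\beta+1)>3$. Letting $\theta\uparrow\theta_*$ yields exactly $\beta>\frac32\max\{p_1,p_1',p_2,p_2'\}-1$. So the $3$ is the crude exponent $2$ plus the $1$ needed for summability; it does not come from $\mu^{1/2}$ losses.

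Two smaller points:
\begin{itemize}
\item No duality step is needed, since the crude bound already holds throughout the Banach range.
\item In your Fourier-decay sketch, bounding by $(2^k|\theta\cdot\zeta'|)^{-1/2}$ on $\{|\theta\cdot\zeta'|\gtrsim 2^{-k}\}$ loses the factor $k^{-\beta}$. You must keep the logarithmic weight there too, e.g.\ using $\log(1/u)\gtrsim k$ for $u\le 2^{-k/2}$ and rapid decay for larger $u$.
\end{itemize}
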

	
	At the formal symmetric boundary \(p_1=p_2=2\), one has \(p=1\), so this
	point lies outside the range \(p>1\) of Theorem \ref{thm:Klog-main}.
	Nevertheless, both the displayed threshold and the high-frequency
	\(L^2\times L^2\to L^1\) summability requirement below lead, in the limit, to
	\(\beta>2\) at this boundary.  We record this only as a limiting benchmark:
	neither the boundary case nor the sharpness of the displayed threshold is
	claimed here.  The logarithmic factor compensates for the loss of the power
	gain available when \(a>1/2\).  If
	\(\cK_a(\Sph)\) denotes the class determined by \eqref{eq:Ka-intro}, then
	\(\cK_a(\Sph)\subset\cK_{1/2,\beta}(\Sph)\) for every \(a>1/2\) and
	\(\beta>0\).  Thus Theorem \ref{thm:Klog-main} extends the boundedness mechanism
	of \cite{DQX0404} to a logarithmically strengthened condition at criticality.
	
	\begin{theorem}\label{thm:incomparability}
		Let \(\alpha>0\) and \(\beta>0\). Then the spaces
		$L(\log L)^\alpha(\Sph)$ and $\cK_{1/2,\beta}(\Sph)$
		are incomparable; namely
		\[
		L(\log L)^\alpha(\Sph)\not\subset\cK_{1/2,\beta}(\Sph),
		\qquad
		\cK_{1/2,\beta}(\Sph)\not\subset L(\log L)^\alpha(\Sph).
		\]
	
	\end{theorem}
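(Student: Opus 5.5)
We will exhibit two explicit mean-zero examples on \(\Sph\), parametrized by \(\theta(t)=(\cos t,\sin t)\).

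\emph{A function in \(\cK_{1/2,\beta}\) but not in \(L(\log L)^\alpha\).} Here a function that is not even in \(L^1\) is not useful, so we look for one that is in \(L^1\) but just barely fails the Orlicz condition. Note first that the \(\cK_{1/2,\beta}\) norm is controlled by \(\norm{\Omega}_{L^q}\) for any \(q>2\): by H\"older,
\[
\int|\Omega(\theta)|\,|\theta\cdot\xi|^{-1/2}\bigl(\log\tfrac1{|\theta\cdot\xi|}\bigr)^\beta d\sigma \le \norm{\Omega}_{L^q}\Bigl(\int|\cos(t-t_\xi)|^{-q'/2}(\log)^{\beta q'}dt\Bigr)^{1/q'},
\]
and the last integral is finite since \(q'/2<1\). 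But \(L^q\subset L(\log L)^\alpha\), so integrability alone cannot separate the classes. The failure must instead come from a rearrangement-invariant quantity, while \(\cK_{1/2,\beta}\) membership must come from the singularities being spread out. The key observation is that \(\cK_{1/2,\beta}\) is not rearrangement-invariant in the way Orlicz spaces are.

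We therefore take \(\Omega=\sum_k c_k\one_{I_k}\), a sum of bumps on disjoint short arcs \(I_k\) of length \(\ell_k\), with heights \(h_k\) and with both \(\sum h_k\ell_k\) and the \(\cK\)-norm finite. A single direction \(\xi\) sees each arc at angular distance \(d_k\) from \(\xi^\perp\). The \(\cK\) contribution of the \(k\)-th bump is at most about
\[
h_k\int_{I_k}|s|^{-1/2}\log^\beta(1/|s|)\,ds\lesssim h_k\,\ell_k^{1/2}\log^\beta(1/\ell_k),
\]
achieved when the arc sits at the critical direction. Its Orlicz contribution is \(h_k\ell_k\log^\alpha h_k\). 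Choosing \(h_k\ell_k^{1/2}\log^\beta(1/\ell_k)=2^{-k}\) makes the \(\cK\)-norm summable uniformly in \(\xi\), since each term is bounded by its worst case. The Orlicz mass is then \(\sum 2^{-k}\ell_k^{1/2}\log^\alpha(h_k)/\log^\beta(1/\ell_k)\), which tends to zero.

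So this construction fails, and the obstacle is real: single bumps obey \(\cK\lesssim\) (Orlicz)\(\cdot\ell^{-1/2}\). I will instead use a density that is large over a long arc. Take \(\Omega(\theta(t))=\sum_k\) of bumps \(b_k\) of height \(H_k\) spread over many tiny arcs with total measure \(m_k\), placed equidistributed so that every direction \(\xi\) sees the same average. The \(\cK\) contribution of \(b_k\) is then about \(H_km_k\) times the average of the weight, which is finite. Its Orlicz contribution is \(H_km_k\log^\alpha H_k\). Choosing \(H_km_k=1/k^2\) and \(H_k=e^{k^{2/\alpha}}\) gives a finite \(\cK\)-norm and an infinite Orlicz norm.

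The main obstacle is making the equidistribution uniform in \(\xi\) near the singular set. I would handle it by taking \(b_k\) to be \(H_k\) times the indicator of the set \(\{t:\operatorname{dist}(N_kt,2\pi\mathbb Z)<\pi m_k\}\), with \(N_k\) huge. A lattice-point count shows that, uniformly in \(\xi\), the integral of \(b_k\) against the integrable weight \(w_\xi(t)=|\cos(t-t_\xi)|^{-1/2}\log^\beta(1/|\cos(t-t_\xi)|)\) is at most \(C H_km_k\int w_\xi\) plus an error. That error comes from periods within \(1/N_k\) of the zero of \(\cos\); it is bounded by \(H_k\,m_k\,N_k^{-1}\cdot N_k^{1/2}\) up to logs, which is small when \(N_k\) is large. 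Finally, subtract the mean to get \(\Omega\) with mean zero; subtracting a constant does not affect either membership.

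\emph{A function in \(L(\log L)^\alpha\) but not in \(\cK_{1/2,\beta}\).} Take a point singularity at \(t=\pi/2\) with \(\xi=(1,0)\), so that \(|\theta\cdot\xi|=|\cos t|\approx|t-\pi/2|=:s\). Set \(\Omega=|s|^{-1/2}\log^{-\gamma}(1/|s|)\) near \(s=0\). This function lies in \(L^2\subset L(\log L)^\alpha\) once \(2\gamma>1\), so take \(\gamma=1\). Then \(\int|\Omega|\,|s|^{-1/2}\log^\beta(1/|s|)\,ds=\int|s|^{-1}\log^{\beta-1}(1/|s|)\,ds=\infty\) since \(\beta>0\). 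Subtracting the mean leaves both properties unchanged. This direction is routine.
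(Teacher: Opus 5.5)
Your proposal is correct and follows essentially the paper's route: a point singularity $|s|^{-1/2}\log^{-\gamma}(1/|s|)$ at a zero of $\theta\cdot\xi_0$ for $L(\log L)^\alpha\not\subset\cK_{1/2,\beta}$. For the reverse non-inclusion it uses heights $H_k$ on $N_k$ equidistributed thin arcs of total measure $m_k$, with $\sum_k H_km_k<\infty=\sum_k H_km_k\log^\alpha H_k$ and $N_k$ large; the paper instead verifies the $\cK_a$ condition for some $a\in(\frac12,1)$ and then embeds, whereas you use the logarithmic weight directly. Two small repairs are needed: an arc centered exactly on a zero of $\theta\cdot\xi$ contributes about $H_k(m_k/N_k)^{1/2}\log^\beta(N_k/m_k)$ rather than $H_km_kN_k^{-1/2}$, though this still tends to $0$ as $N_k\to\infty$; and overlaps among the $b_k$ are harmless for the Orlicz lower bound because $\Phi(\sum_k b_k)\ge\sum_k\Phi(b_k)$ whenever $\Phi(t)/t$ is nondecreasing.
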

	
		In particular, \(L\log L(\Sph)\) and
	\(\cK_{1/2,\beta}(\Sph)\) are  incomparable. Theorem \ref{thm:incomparability} shows that the two endpoint criteria measure
	different singular behavior.  The \(L\log L\) condition controls the global
	size of the angular kernel, while \(\cK_{1/2,\beta}\) imposes weighted
	integrability near every orthogonal direction.  Neither condition dominates the
	other.
	
	The main contributions of this paper are as follows:
	\begin{itemize}
		\item We characterize exactly when the finite-part angular profile has
		bounded variation.  This gives a rotational criterion throughout the
		finite-exponent Banach range
		(Theorems~\ref{thm:profile-characterization}
		and~\ref{thm:rotational-main}).
		
		\item We prove boundedness under the endpoint hypothesis
		\(\Omega\in L\log L(\Sph)\) and show that the logarithmic exponent \(1\)
		cannot be lowered within the scale \(L(\log L)^A\)
		(Theorems~\ref{thm:LlogL-main} and~\ref{thm:Orlicz-sharp}).
		
		\item We introduce a logarithmically corrected directional condition at the
		critical index \(a=1/2\), prove the corresponding operator bound, and show
		that this directional class is incomparable with every positive logarithmic
		Orlicz class
		(Theorems~\ref{thm:Klog-main} and~\ref{thm:incomparability}).
	\end{itemize}
	
	To demonstrate our novelty, we now explain  the main ideas in this paper. The two endpoint criteria require different arguments.  For the \(L\log L\)
	result, the starting point is the finite-part multiplier formula
	\cite[Proposition~4.2.3]{GrafakosClassical}.  After splitting \(\Omega\) into
	its antipodal even and odd parts, the formula separates into two pieces:
	the derivative of the logarithmic part is determined by the periodic Hilbert
	transform of the even part, while the derivative of the sign part is determined
	directly by the odd part.  The \(BV\) regularity of \(\cM_\Omega\) is therefore
	reduced to a one-dimensional conjugate-function problem.  The F.~and M.~Riesz
	theorem gives the converse implication in the
	\(BV\)--\(H^1\) characterization, while the Riesz--Zygmund estimate provides
	the endpoint \(L\log L\to L^1\) control; see
	\cite[Chapter~VII]{Zygmund1959}.  The corresponding quantitative profile
	estimate is recorded in Proposition~\ref{prop:profile-bv}.
	
	We then pass from angular regularity to operator bounds.  A \(BV\) profile is a
	Stieltjes superposition of step functions.  The corresponding bilinear
	multipliers are half-plane multipliers, controlled uniformly by bilinear
	Hilbert transform estimates.  This lets us combine the rotational argument of
	\cite{DGHST2011} with the uniform theorem of
	\cite{UraltsevWarchalski2022} without imposing absolute continuity on the
	angular profile.
	
	The directional endpoint is proved by a separate mechanism.  The power gain
	used for \(a>1/2\) in \cite{DQX0404} disappears at \(a=1/2\).  The logarithmic
	factor in \(\cK_{1/2,\beta}\) replaces it by the annular decay
	\(2^{-j/2}j^{-\beta}\).  Using the product-wavelet framework of
	\cite{GrafakosHeHonzikPark2023}, we convert this decay into the estimate
	\[
	\norm{T_j}_{L^2\times L^2\to L^1}\lesssim j^{1-\beta}.
	\]
	We then interpolate this estimate with the polynomial bound in
	Lemma~\ref{lem:poly-growth}, whose proof uses the shifted-operator argument
	from \cite[the proof of Proposition~4]{dosidis_multilinear_2024}, and
	apply Sagher's theorem \cite{Sagher1969} to obtain the required summability and
	the stated condition on \(\beta\).
	
	The organization of this paper is as follows.  In Section 2, we prove the
	finite-part angular-profile characterization and the corresponding rotational
	criterion, namely Theorems~\ref{thm:profile-characterization}
	and~\ref{thm:rotational-main}.  Section 2  contains the proof of Theorems~\ref{thm:LlogL-main} and~\ref{thm:Orlicz-sharp}, the
	\(L\log L\) endpoint theorem and its sharpness in the Orlicz scale.
	Section 3 is devoted
	to the proof of critical directional endpoint, Theorem~\ref{thm:Klog-main}.  The proof
	of the incomparability theorem, Theorem~\ref{thm:incomparability}, is given in
	Section 4.  Throughout this paper, the letter \(C\) denotes a positive constant,
	not necessarily the same at each occurrence, but always independent of the
	functions under consideration and of the relevant summation and truncation
	parameters.
	\vspace{0.3cm}
	
	\section{Boundedness under the sharp \(L\log L\) kernel condition}
	
	\subsection{Finite-part profiles and the rotational criterion}
	
We begin with the finite-part formula that identifies the multiplier of the
rough kernel; see \cite[Proposition~4.2.3]{GrafakosClassical}.  We then prove
the \(BV\) multiplier estimate needed for the rotational theorem.
	
	\begin{lemma}
		\label{lem:finite-part}
		Let \(\Omega\in L^\infty(\Sph)\) have mean zero.  Then
		\[
		m_\Omega(\xi,\eta)=\cM_\Omega
		\left(\frac{(\xi,\eta)}{(\xi^2+\eta^2)^{1/2}}\right),
		\qquad (\xi,\eta)\ne(0,0),
		\]
		where
		\[
		\cM_\Omega(u)
		=
		c\int_{\Sph}\Omega(\theta)
		\left[-\log|\theta\cdot u|-\frac{i\pi}{2}\sgn(\theta\cdot u)\right]d\sigma(\theta),
		\qquad u\in \Sph,
		\]
		and \(c>0\) depends only on the normalizations.
	\end{lemma}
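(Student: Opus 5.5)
We compute the Fourier transform of the principal-value kernel \(K(y)=\Omega(y/|y|)|y|^{-2}\) on \(\R^2\) directly in polar coordinates, following the scheme of \cite[Proposition~4.2.3]{GrafakosClassical}. Since \(T_\Omega(f,g)(x)=\int_{\R^2}\widehat f(\xi)\widehat g(\eta)\,m_\Omega(\xi,\eta)e^{2\pi i x(\xi+\eta)}\,d\xi\,d\eta\) with \(m_\Omega=\widehat K\) (as a tempered distribution), it suffices to identify \(\widehat K\). The first step is to truncate: set \(K_{\varepsilon,R}=K\one_{\varepsilon<|y|<R}\). For \(\Omega\in L^\infty\) these truncations converge to \(K\) in \(\mathcal S'\), and their Fourier transforms are bounded functions. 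Hence it is enough to compute the pointwise limit of \(\widehat{K_{\varepsilon,R}}(\zeta)\) for \(\zeta\neq0\) and to dominate it locally uniformly, which gives convergence in \(\mathcal S'\).

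For \(\zeta=|\zeta|u\), write
\[
\widehat{K_{\varepsilon,R}}(\zeta)=c_{\rm rad}\int_{\Sph}\Omega(\theta)\int_\varepsilon^R e^{-2\pi i r|\zeta|\,\theta\cdot u}\,\frac{dr}{r}\,d\sigma(\theta).
\]
Because \(\int\Omega\,d\sigma=0\), we may replace the inner integral by \(\int_\varepsilon^R\bigl(e^{-2\pi i r|\zeta|\theta\cdot u}-\cos(2\pi r|\zeta|)\bigr)\,dr/r\), or subtract any \(\theta\)-independent quantity. The real part is a Frullani-type integral \(\int_\varepsilon^R(\cos(a r)-\cos(b r))\,dr/r\to\log(b/a)\) with \(a=2\pi|\zeta||\theta\cdot u|\) and \(b=2\pi|\zeta|\). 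After the mean-zero cancellation this leaves \(-\log|\theta\cdot u|\), and the \(|\zeta|\)-dependence disappears. The imaginary part is \(-\int_\varepsilon^R\sin(ar\,\sgn(\theta\cdot u))\,dr/r\to-\frac{\pi}{2}\sgn(\theta\cdot u)\) by the Dirichlet integral. Together these give exactly the bracket in the definition of \(\cM_\Omega\), and the resulting limit is homogeneous of degree \(0\), as claimed.

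The main obstacle is justifying the exchange of limit and \(\theta\)-integration, uniformly for \(\zeta\) in compact subsets of \(\R^2\setminus\{0\}\). The plan is to use the uniform bounds
\[
\Bigl|\int_\varepsilon^R\frac{\sin(sr)}{r}\,dr\Bigr|\le C
\]
for all \(\varepsilon,R,s\), and
\[
\Bigl|\int_\varepsilon^R\bigl(\cos(ar)-\cos(br)\bigr)\frac{dr}{r}\Bigr|\le C\bigl(1+|\log(a/b)|\bigr).
\]
The second bound can be checked by splitting at \(r\sim 1/b\) and \(r\sim 1/a\). Since \(\log|\theta\cdot u|\) is integrable on \(\Sph\) uniformly in \(u\), and \(\Omega\in L^\infty\), dominated convergence applies. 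The same domination shows that \(\sup_{\varepsilon,R}\|\widehat{K_{\varepsilon,R}}\|_{L^\infty}<\infty\), which upgrades pointwise convergence to convergence in \(\mathcal S'\). The constant \(c\) is then the polar-coordinate constant (\(2\pi\) from \(\sigma(\Sph)=1\)), which depends only on the normalizations.
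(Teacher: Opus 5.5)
Your proof is correct. It is the classical sharp-truncation argument behind \cite[Proposition~4.2.3]{GrafakosClassical}, whereas the paper reaches the same formula through a different regularization.

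The paper does not use the truncations $K\one_{\{\varepsilon<|y|<R\}}$. Instead it damps the kernel at infinity by $e^{-\delta|y|}$, keeping the radial principal value at the origin, and subtracts the $\theta$-independent term $e^{-r}$. The resulting absolutely convergent radial integral is then evaluated by the complex Frullani identity $\int_0^\infty(e^{-ar}-e^{-br})\,dr/r=\log(b/a)$ for $\operatorname{Re}a,\operatorname{Re}b>0$. This gives the closed form $m_{\Omega,\delta}(\zeta)=-2\pi\int_{\Sph}\Omega(\theta)\log(\delta+2\pi i\,\theta\cdot\zeta)\,d\sigma(\theta)$. As $\delta\downarrow0$, the logarithmic and sign parts of $\cM_\Omega$ appear together, as the modulus and the argument $\pm\pi/2$ of the principal logarithm. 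The paper gets its uniform bound from the rescaling $t=\delta/(2\pi|\zeta|)$ and the integrability of $\log|\cos\varphi|$.

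You split the computation into the Dirichlet integral and the cosine Frullani integral. You control the limit by the boundedness of the sine integral and by the bound $C(1+|\log|\theta\cdot u||)$. These bounds depend only on $|\theta\cdot u|$ and not on $|\zeta|$, so they hold globally on $\R^2\setminus\{0\}$. Global uniformity is what $\mathcal S'$ convergence actually requires, not the local uniformity you first state, and no rescaling is needed.

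Your version has two advantages: the truncations are exactly those that define the principal value, so $K_{\varepsilon,R}\to K$ in $\mathcal S'$ is immediate, and only elementary real integrals appear. The paper's version has the advantage of a single closed-form expression for the regularized multiplier, so it does not have to handle two conditionally convergent oscillatory integrals separately.
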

	
	\begin{proof}
		We use the Fourier transform convention
		\[
		\widehat F(\zeta)
		=
		\int_{\R^2}F(y)e^{-2\pi i y\cdot\zeta}\,dy,
		\qquad \zeta\in\R^2.
		\]
		Since \(d\sigma=d\varphi/(2\pi)\), polar coordinates give
		\(dy=2\pi r\,dr\,d\sigma(\theta)\). For \(\delta>0\), consider the
		Abel-regularized kernel
		\[
		K_{\Omega,\delta}(y)
		=
		e^{-\delta|y|}\frac{\Omega(y/|y|)}{|y|^2},
		\]
		interpreted at the origin by radial principal value. More precisely, for
		\(\Psi\in\mathcal S(\R^2)\), set
		\[
		\begin{aligned}
			\langle K_{\Omega,\delta},\Psi\rangle
			=2\pi\int_{\Sph}\Omega(\theta)\bigg[
			&\int_0^1 e^{-\delta r}
			\frac{\Psi(r\theta)-\Psi(0)}r\,dr\\
			&+\int_1^\infty e^{-\delta r}
			\frac{\Psi(r\theta)}r\,dr
			\bigg]d\sigma(\theta).
		\end{aligned}
		\]
		The subtraction in the first integral is permitted by
		\(\int_{\Sph}\Omega\,d\sigma=0\). Taylor's theorem at the origin and
		the rapid decay of \(\Psi\) show that the expression is well defined.
		They also give, by dominated convergence,
		\(K_{\Omega,\delta}\to K_\Omega\) in \(\mathcal S'(\R^2)\) as
		\(\delta\downarrow0\), where \(K_\Omega\) is the finite-part distribution
		determined by the original homogeneous kernel.
		
		Let \(\zeta=(\xi,\eta)\ne0\). After using the angular cancellation, the
		multiplier of the regularized kernel can be written as
		\[
		\begin{aligned}
			m_{\Omega,\delta}(\zeta)
			=2\pi\int_{\Sph}\Omega(\theta)
			\int_0^\infty
			\left(
			e^{-(\delta+2\pi i\theta\cdot\zeta)r}-e^{-r}
			\right)\frac{dr}{r}\,d\sigma(\theta).
		\end{aligned}
		\]
		The term \(e^{-r}\) is independent of \(\theta\), so its insertion does
		not change the multiplier. The inner integral is now absolutely convergent
		at both endpoints. For complex numbers \(a,b\) with positive real parts,
		Frullani's identity gives
		\[
		\int_0^\infty(e^{-ar}-e^{-br})\,\frac{dr}{r}
		=
		\log\frac ba.
		\]
		Taking \(a=\delta+2\pi i\theta\cdot\zeta\), \(b=1\), and the principal
		branch of the logarithm, we obtain
		\[
		m_{\Omega,\delta}(\zeta)
		=
		-2\pi\int_{\Sph}\Omega(\theta)
		\log\bigl(\delta+2\pi i\theta\cdot\zeta\bigr)\,d\sigma(\theta).
		\]
		
		For \(s\ne0\),
		\[
		-\log(\delta+2\pi i s)
		\longrightarrow
		-\log(2\pi|s|)-\frac{i\pi}{2}\sgn(s)
		\qquad(\delta\downarrow0).
		\]
		This limit may be passed through the angular integral. Indeed, if
		\(u=\zeta/|\zeta|\), then
		\(\theta\cdot\zeta=|\zeta|\theta\cdot u\), and the only angular
		singularities are translates of \(\log|\cos\varphi|\), which belongs to
		\(L^1(\T)\). The convergence is in fact local on the whole frequency
		plane, including the origin. To see this, put
		\(t=\delta/(2\pi|\zeta|)\) for \(\zeta\ne0\). The mean-zero condition gives
		\[
		m_{\Omega,\delta}(\zeta)
		=
		-2\pi\int_{\Sph}\Omega(\theta)
		\bigl[\log(t+i\theta\cdot u)-\log(t+1)\bigr]\,d\sigma(\theta).
		\]
		Moreover,
		\[
		\sup_{t>0}\sup_{u\in\Sph}
		\int_{\Sph}
		\bigl|\log(t+i\theta\cdot u)-\log(t+1)\bigr|\,d\sigma(\theta)<\infty.
		\]
		By rotation invariance, it is enough to take \(\theta\cdot u=\cos\varphi\).
		For \(0<t\le1\), the estimate follows from
		\(\log|\cos\varphi|\in L^1(\T)\) and the boundedness of the argument; for
		\(t\ge1\), the logarithm of \((t+i\cos\varphi)/(t+1)\) is uniformly
		bounded. Hence \(m_{\Omega,\delta}\) is uniformly bounded on \(\R^2\)
		(after setting \(m_{\Omega,\delta}(0)=0\)). Dominated convergence now gives
		convergence in \(L^1_{\mathrm{loc}}(\R^2)\). Since
		\(K_{\Omega,\delta}\to K_\Omega\) in \(\mathcal S'\), the locally
		integrable limit represents \(\widehat K_\Omega\). Thus
		\[
		m_\Omega(\zeta)
		=
		2\pi\int_{\Sph}\Omega(\theta)
		\left[
		-\log\bigl(2\pi|\theta\cdot\zeta|\bigr)
		-\frac{i\pi}{2}\sgn(\theta\cdot\zeta)
		\right]d\sigma(\theta).
		\]
		Finally,
		\[
		-\log\bigl(2\pi|\theta\cdot\zeta|\bigr)
		=
		-\log(2\pi|\zeta|)-\log|\theta\cdot u|.
		\]
		The first term is independent of \(\theta\) and disappears by the
		mean-zero condition; also
		\(\sgn(\theta\cdot\zeta)=\sgn(\theta\cdot u)\). Consequently,
		\[
		m_\Omega(\xi,\eta)
		=
		2\pi\int_{\Sph}\Omega(\theta)
		\left[
		-\log|\theta\cdot u|
		-\frac{i\pi}{2}\sgn(\theta\cdot u)
		\right]d\sigma(\theta),
		\qquad
		u=\frac{(\xi,\eta)}{(\xi^2+\eta^2)^{1/2}}.
		\]
		Thus with the present Fourier and surface-measure normalizations, the
		constant in the statement is \(c=2\pi\).
	\end{proof}
	
	We next formulate the cone-decomposition argument of \cite{DGHST2011} for
	\(BV\) angular profiles.  In place of an absolutely continuous representative,
	we use its Stieltjes derivative together with uniform bounds for line-sign
	multipliers.
	
	We begin by fixing the normalization of the bilinear Hilbert transform used
	below.  For
	\(\beta\in\R\setminus\{0,1\}\), define initially on Schwartz functions
	\[
	\operatorname{BHT}_{\beta}(f,g)(x)
	:=
	\operatorname{p.v.}\int_\R
	f(x-t)g(x-\beta t)\,\frac{dt}{t}.
	\]
	Up to normalization, \(\operatorname{BHT}_\beta\) is the bilinear multiplier
	with symbol \(\sgn(\xi+\beta\eta)\).  The uniform theorem of Uraltsev and
	Warchalski, together with the trilinear symmetries used below, gives bounds
	that are uniform in the direction throughout the finite-exponent Banach range
	considered here; see \cite{UraltsevWarchalski2022}, building on
	\cite{LaceyThiele1997,LaceyThiele1999,GrafakosLi2004,Li2006,Thiele2002}.
	
	\begin{lemma}
		\label{lem:line-sign}
		Let \(1<p_1,p_2<\infty\), \(1<p<\infty\), and
		\(1/p=1/p_1+1/p_2\).  For every nonzero \(v=(v_1,v_2)\), let \(T_v\) be the
		bilinear multiplier with symbol \(\sgn(v_1\xi+v_2\eta)\).  Then
		\[
		\norm{T_v}_{L^{p_1}(\R)\times L^{p_2}(\R)\to L^p(\R)}
		\le C_{p_1,p_2},
		\]
		uniformly in \(v\).
	\end{lemma}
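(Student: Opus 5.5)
The plan is to reduce every \(T_v\) to a bilinear Hilbert transform \(\operatorname{BHT}_\beta\), or to a degenerate case, and then import the uniform estimates of \cite{UraltsevWarchalski2022}.

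\emph{Normalization and degenerate directions.} Since \(\sgn(\lambda v_1\xi+\lambda v_2\eta)=\sgn(v_1\xi+v_2\eta)\) for \(\lambda>0\), and \(T_{-v}=-T_v\), we may assume \(v\in\Sph\). If \(v_2=0\), the symbol is \(\pm\sgn\xi\) and \(T_v(f,g)=\pm(Hf)\,g\) up to constants; H\"older's inequality and the \(L^{p_1}\) boundedness of \(H\) then give the bound. The case \(v_1=0\) is symmetric. If \(v_1=v_2\), the symbol is \(\sgn(\xi+\eta)\) and \(T_v(f,g)=\pm H(fg)\), which is bounded because \(1<p<\infty\).

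\emph{Generic directions.} For all remaining \(v\), write the symbol as \(\sgn(v_1)\sgn(\xi+\beta\eta)\) with \(\beta=v_2/v_1\notin\{0,1\}\). A Fourier computation of \(\operatorname{p.v.}\int f(x-t)g(x-\beta t)\,dt/t\) gives symbol \(-i\pi\sgn(\xi+\beta\eta)\), so \(T_v=c\,\operatorname{BHT}_\beta\) with \(|c|\) an absolute constant.

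\emph{Main obstacle: uniformity in \(\beta\).} The bounds for \(\operatorname{BHT}_\beta\) degenerate as \(\beta\to0\), \(\beta\to1\), or \(\beta\to\infty\), which correspond to the degenerate directions above. The theorem of Uraltsev--Warchalski gives bounds uniform in the parameter near each degeneration within the Banach range, but it is stated for a normalized trilinear form with a specific degenerate configuration. I would handle this as follows.

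1. Pass to the trilinear form \(\Lambda(f_1,f_2,f_3)=\int T_v(f_1,f_2)f_3\). On Fourier support \(\xi_1+\xi_2+\xi_3=0\), its symbol is \(\sgn(\gamma\cdot\xi)\) with \(\gamma=(v_1,v_2,0)\), defined modulo \((1,1,1)\).
2. Use the symmetries permuting the three functions, which correspond to duality and adjoints. These map the exponent triple \((p_1,p_2,p')\) to its permutations, all of which stay in the Banach range.
3. Choose a permutation so that \(\gamma\), normalized in the plane \(\gamma\perp(1,1,1)\), approaches the single degenerate configuration treated in \cite{UraltsevWarchalski2022}.

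Split the projective circle of directions into three closed arcs, one around each degeneracy, each covered by the uniform theorem after the appropriate permutation, plus compact pieces away from the degeneracies. On the compact pieces the Lacey--Thiele bounds depend continuously on \(\beta\), hence are bounded. Taking the maximum over finitely many pieces gives a constant \(C_{p_1,p_2}\) independent of \(v\).

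The step requiring the most care is checking that the permuted exponent triples still satisfy the hypotheses of the uniform theorem. This holds because those hypotheses are symmetric in the Banach range \(1<p_1,p_2,p'<\infty\).
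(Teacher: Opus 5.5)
Your proposal is correct and follows essentially the paper's route. You pass to the trilinear form on \(\xi_1+\xi_2+\xi_3=0\), use the permutation symmetries (which only permute \((p_1,p_2,p')\) inside the Banach range) to move nondegenerate directions into the configuration covered by Uraltsev--Warchalski, and handle \(v_1=0\), \(v_2=0\), \(v_1=v_2\) by H\"older's inequality and the linear Hilbert transform.

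The difference is only bookkeeping. The paper subtracts the middle coefficient of \((v_1,v_2,0)\), permutes and rescales to reach the single family \((1,-\beta,0)\) with \(0<\beta\le1\), so only the degeneration \(\beta\to0\) appears and the uniform theorem is invoked once. Your arc decomposition instead needs the standard fact that the Lacey--Thiele constants are uniform on compact sets of nondegenerate parameters; this is the accurate statement, not ``continuous dependence on \(\beta\)''.
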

	
	\begin{proof}
		Let \(p_3=p'\) and consider the associated trilinear form on the hyperplane
		\(\xi_1+\xi_2+\xi_3=0\).  The line-sign symbol has coefficient triple
		\((v_1,v_2,0)\).  Adding a common scalar to the three coefficients leaves the
		symbol unchanged on this hyperplane, whereas permuting the variables merely
		permutes the exponents \(p_1,p_2,p_3\).
		
		Suppose first that \(v_1,v_2,0\) are pairwise distinct.  Subtracting the middle
		coefficient, permuting the variables, and rescaling all coefficients by a
		nonzero scalar reduces the triple, up to an overall harmless sign, to
		\[
		(1,-\beta,0),
		\qquad 0<\beta\le1.
		\]
		Thus the permuted trilinear form is that of the bilinear Hilbert transform
		with symbol \(\sgn(\xi-\beta\eta)\), equivalently
		\(\operatorname{BHT}_{-\beta}\) in the displayed convention.  The uniform
		theorem of Uraltsev and Warchalski \cite{UraltsevWarchalski2022} applies.  Every
		permuted input exponent belongs to the triple \(p_1,p_2,p'\), hence is greater
		than one; the corresponding output exponent is the conjugate of the remaining
		member of this triple and is also greater than one.  Taking the maximum over
		the finitely many permutations yields a constant depending only on
		\(p_1,p_2\).
		
		If \(v_1=0\), \(v_2=0\), or \(v_1=v_2\), the operator is, respectively,
		\(fHg\), \((Hf)g\), or \(H(fg)\), up to harmless signs.  These cases follow
		from H\"older's inequality and the boundedness of the linear Hilbert transform;
		the last case uses \(p>1\).
	\end{proof}
	
	It follows that the half-plane multipliers
	\[
	\one_{\{v_1\xi+v_2\eta>0\}}
	=
	\frac12\bigl(1+\sgn(v_1\xi+v_2\eta)\bigr)
	\]
	are also bounded from \(L^{p_1}\times L^{p_2}\) to \(L^p\), uniformly in
	\(v\ne0\).
	
	\begin{lemma}\label{lem:one-chart-bv}
		Let \(b\in BV(\R)\cap L^\infty(\R)\).  Each of the four model multipliers
		\[
		\one_{\{\xi>0\}}b(\eta/\xi),\quad
		\one_{\{\xi<0\}}b(\eta/(-\xi)),\quad
		\one_{\{\eta>0\}}b(\xi/\eta),\quad
		\one_{\{\eta<0\}}b(\xi/(-\eta))
		\]
		defines an operator whose norm is at most
		\[
		C_{p_1,p_2}\bigl(\norm b_\infty+\Var_\R(b)\bigr).
		\]
	\end{lemma}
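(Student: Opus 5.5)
We argue by a Stieltjes superposition of half-plane multipliers, treating the model \(\one_{\{\xi>0\}}b(\eta/\xi)\) in detail. The other three models follow by the substitutions \((\xi,\eta)\mapsto(-\xi,\eta)\), \((\xi,\eta)\mapsto(\eta,\xi)\) and their composition. The reflection corresponds to replacing \(f\) by \(f(-\cdot)\) together with a conjugation that preserves all \(L^r\) norms; alternatively one may simply repeat the argument verbatim with \(v\) adjusted. The swap interchanges the roles of \(f\) and \(g\), hence of \(p_1\) and \(p_2\), and the constant of Lemma~\ref{lem:line-sign} is symmetric after taking a maximum. First normalize \(b\). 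Since \(b\in BV(\R)\), it has limits \(b(\pm\infty)\), and we may change \(b\) on a null set to obtain a right-continuous representative. This affects the multiplier only on a Lebesgue-null set of \((\xi,\eta)\), namely a union of countably many lines through the origin, so the operator is unchanged. Let \(\mu=db\) be the associated finite signed Borel measure, with \(\norm{\mu}=\Var_\R(b)\).

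Next write the Stieltjes representation \(b(s)=b(-\infty)+\int_\R \one_{\{s\ge\lambda\}}\,d\mu(\lambda)\). For \(\xi>0\), the condition \(\eta/\xi\ge\lambda\) is equivalent to \(\eta-\lambda\xi\ge0\). Hence, off the null set \(\{\xi=0\}\cup\{\eta=\lambda\xi\}\) (the latter being null for each \(\lambda\)),
\[
\one_{\{\xi>0\}}b(\eta/\xi)
= b(-\infty)\one_{\{\xi>0\}}
+\int_\R \one_{\{\xi>0\}}\one_{\{\eta-\lambda\xi>0\}}\,d\mu(\lambda).
\]
The first term is the half-plane multiplier with \(v=(1,0)\), which is bounded by \(C\abs{b(-\infty)}\le C\norm b_\infty\) after the remark following Lemma~\ref{lem:line-sign}. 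The integrand in the second term is a product of two half-plane indicators. By the identity \(\one_A\one_B=\tfrac14(1+\sgn_A)(1+\sgn_B)\) we get \(\tfrac14\bigl(1+\sgn\xi+\sgn(\eta-\lambda\xi)+\sgn\xi\,\sgn(\eta-\lambda\xi)\bigr)\). The last product is a sector (cone) multiplier rather than a half-plane one. We handle it via \(\one_{\{\xi>0,\ \eta>\lambda\xi\}}\): in the plane this is the intersection of two half-planes through the origin, that is, a sector of opening less than \(\pi\). Such a sector indicator equals \(\one_{H_1}-\one_{H_1\setminus H_2}\), which does not reduce it. Instead we use \(\one_{H_1\cap H_2}=\tfrac12(\one_{H_1}+\one_{H_2}-\one_{H_1\triangle H_2})\), and \(H_1\triangle H_2\) is a union of two opposite sectors, the set where \(\sgn\xi\,\sgn(\eta-\lambda\xi)<0\).

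This sector step is the main obstacle. A single sector multiplier (for example, the product \(\sgn\xi\,\sgn(\eta-\lambda\xi)\)) is not a line-sign multiplier. Our first attempt is to dominate it by composition: the multiplier \(\sgn(\xi)\) acts only on \(f\), as \(H\) applied to \(f\). Therefore \(T_{\sgn\xi\,\sgn(\eta-\lambda\xi)}(f,g)=T_{(-\lambda,1)}(Hf,g)\) up to signs, which is bounded by \(C_{p_1,p_2}\norm{f}_{p_1}\norm g_{p_2}\) uniformly in \(\lambda\) by Lemma~\ref{lem:line-sign} and the \(L^{p_1}\) boundedness of \(H\). This observation removes the obstacle, so every term in the integrand is bounded uniformly in \(\lambda\).

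Finally, we integrate against \(\mu\). For Schwartz \(f,g\), the operator with symbol \(\int m_\lambda\,d\mu(\lambda)\) equals \(\int T_{m_\lambda}(f,g)\,d\mu(\lambda)\) pointwise. This follows from Fubini, since \(\widehat f\otimes\widehat g\in L^1\) and \(\abs{m_\lambda}\le1\). Minkowski's integral inequality in \(L^p\) (recall \(p>1\)) then gives the bound \(C_{p_1,p_2}\norm{\mu}=C_{p_1,p_2}\Var_\R(b)\). Joint measurability in \((\lambda,x)\) is immediate from the same absolutely convergent Fourier representation. Adding the contribution of \(b(-\infty)\) gives \(C_{p_1,p_2}(\norm b_\infty+\Var_\R(b))\), and density extends the estimate to all of \(L^{p_1}\times L^{p_2}\).
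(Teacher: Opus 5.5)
Your argument is correct and is essentially the paper's proof. It uses the same Stieltjes representation $b(s)=b(-\infty)+\mu((-\infty,s])$, the same uniform bound for $\one_{\{\xi>0\}}\one_{\{\eta-\lambda\xi>0\}}$ obtained by letting the $\xi$-only factor act on $f$ alone, and the same Minkowski estimate in total variation. The paper writes this piece directly as $\Pi_\lambda(P_+f,g)$, which makes your four-term sign expansion and the sector discussion unnecessary.

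One caution concerns the second chart. The one-variable flip $(\xi,\eta)\mapsto(-\xi,\eta)$ is not induced by reflecting or conjugating $f$, because it changes the output frequency $\xi+\eta$. For example, $\sgn(\xi+\eta)$ gives $H(fg)$, while $\sgn(\eta-\xi)$ gives a bilinear Hilbert transform. So use your fallback of repeating the argument with $P_-$ and $\one_{\{\eta+\lambda\xi>0\}}$, as the paper does. Alternatively, use the full reflection $(\xi,\eta)\mapsto(-\xi,-\eta)$ with $b$ replaced by $b(-\cdot)$.
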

	
	\begin{proof}
		We consider \(m(\xi,\eta)=\one_{\{\xi>0\}}b(\eta/\xi)\).  A \(BV\) function on
		\(\R\) has finite limits at \(\pm\infty\)
		\cite[Theorem~3.27(c)]{Folland1999}.  Choose the right-continuous
		representative of \(b\), and let \(\nu=db\) be the associated
		Lebesgue--Stieltjes measure.  By the correspondence between
		one-dimensional \(BV\) functions and finite Borel measures, and by the
		identification of the total variation measure
		\cite[Theorem~3.29]{Folland1999},
		\[
		|\nu|(\R)=\Var_\R(b),
		\]
		and
		\[
		b(s)=b(-\infty)+\nu((-\infty,s]).
		\]
		Up to changes on multiplier-null boundary sets, this gives
		\[
		\one_{\{\xi>0\}}b(\eta/\xi)
		=
		b(-\infty)\one_{\{\xi>0\}}
		+
		\int_\R
		\one_{\{\xi>0\}}\one_{\{\eta-t\xi>0\}}\,d\nu(t).
		\]
		Let \(P_+\) denote the Fourier projection with symbol
		\(\one_{\{\xi>0\}}\), and let \(\Pi_t\) denote the half-plane bilinear
		multiplier with symbol \(\one_{\{\eta-t\xi>0\}}\).  The multiplier
		\[
		\one_{\{\xi>0\}}\one_{\{\eta-t\xi>0\}}
		\]
		acts as
		\[
		(f,g)\mapsto \Pi_t(P_+f,g).
		\]
		The projection \(P_+\) is bounded on \(L^{p_1}\), and the half-plane
		consequence of Lemma \ref{lem:line-sign} bounds \(\Pi_t\) uniformly in \(t\).
		Thus these terms have operator norm \(\le C_{p_1,p_2}\), uniformly in \(t\).
		The constant term
		\(b(-\infty)P_+f\cdot g\) is controlled by H\"older's inequality and the
		\(L^{p_1}\)-boundedness of \(P_+\).  Estimating the operator-valued Stieltjes
		integral in total variation yields
		\[
		\norm{T_m}
		\le
		C_{p_1,p_2}\bigl(|b(-\infty)|+|\nu|(\R)\bigr)
		\le
		C_{p_1,p_2}\bigl(\norm b_\infty+\Var_\R(b)\bigr).
		\]
		
		For \(m(\xi,\eta)=\one_{\{\xi<0\}}b(\eta/(-\xi))\), we use the projection
		\(P_-\) and the half-plane symbols \(\one_{\{\eta+t\xi>0\}}\).  For the two
		\(\eta\)-charts, use the corresponding projections in the second input and
		the half-plane symbols \(\one_{\{\xi-t\eta>0\}}\) or
		\(\one_{\{\xi+t\eta>0\}}\).  The same argument yields the stated bound in
		each case.
	\end{proof}
	
	\begin{proposition}\label{prop:bv-angular}
		Let \(a\in BV(\Sph)\cap L^\infty(\Sph)\), and set
		\[
		m_a(\xi,\eta)=a\left(\frac{(\xi,\eta)}{(\xi^2+\eta^2)^{1/2}}\right).
		\]
		Then
		\[
		\norm{T_{m_a}}_{L^{p_1}\times L^{p_2}\to L^p}
		\le
		C_{p_1,p_2}\bigl(\norm a_\infty+\Var_{\Sph}(a)\bigr).
		\]
	\end{proposition}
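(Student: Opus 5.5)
The plan is to reduce Proposition~\ref{prop:bv-angular} to the four one-chart estimates of Lemma~\ref{lem:one-chart-bv} by a partition of the circle into four overlapping arcs. Each arc is contained in an open half-circle, so on it the angular variable can be written as a slope.

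\textbf{Step 1: smooth partition of unity.} Fix smooth functions \(\chi_1,\dots,\chi_4\) on \(\Sph\) with \(\sum_k\chi_k\equiv1\). Choose them so that \(\chi_1\) is supported in the open arc \(\{\theta_1>\tfrac{1}{10}\}\) around \((1,0)\), \(\chi_2\) in \(\{\theta_1<-\tfrac1{10}\}\), \(\chi_3\) in \(\{\theta_2>\tfrac1{10}\}\), and \(\chi_4\) in \(\{\theta_2<-\tfrac1{10}\}\). This is possible because these four arcs cover \(\Sph\). Then \(a=\sum_k a\chi_k\). Each piece satisfies
\[
\norm{a\chi_k}_\infty\le\norm a_\infty,\qquad
\Var_{\Sph}(a\chi_k)\le \Var_{\Sph}(a)+\norm a_\infty\Var_{\Sph}(\chi_k)\le C\bigl(\norm a_\infty+\Var_{\Sph}(a)\bigr),
\]
by the product rule for variation. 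It therefore suffices to bound \(T_{m_{a\chi_k}}\) for each \(k\).

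\textbf{Step 2: transfer to a slope profile.} Take \(k=1\). For \(\xi>0\), the point \((\xi,\eta)/\abs{(\xi,\eta)}\) equals
\[
\Theta(s):=\frac{(1,s)}{(1+s^2)^{1/2}},\qquad s=\eta/\xi .
\]
The map \(\Theta\) is a monotone homeomorphism of \(\R\) onto the open right half-circle. Define
\[
b(s):=(a\chi_1)(\Theta(s)).
\]
Since \(\supp\chi_1\) lies strictly inside the right half-circle, \(b\) vanishes for \(|s|\) large. Variation is invariant under monotone reparametrization, so
\[
\Var_\R(b)\le\Var_{\Sph}(a\chi_1),\qquad \norm b_\infty\le\norm a_\infty .
\]
Also, \(m_{a\chi_1}(\xi,\eta)=\one_{\{\xi>0\}}b(\eta/\xi)\) off the null set \(\{\xi=0\}\), because \(\chi_1\) vanishes on \(\{\theta_1\le 0\}\). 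Lemma~\ref{lem:one-chart-bv} then gives
\[
\norm{T_{m_{a\chi_1}}}\le C\bigl(\norm a_\infty+\Var_{\Sph}(a)\bigr).
\]
The other three arcs use the other three model multipliers in the same way: the slope \(\eta/(-\xi)\) on the left half-circle, and the slopes \(\xi/\eta\) and \(\xi/(-\eta)\) on the upper and lower half-circles. Summing the four bounds proves the proposition.

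\textbf{Main obstacle.} The only delicate point is measure-theoretic. The function \(a\) is only an \(L^\infty\) equivalence class with a \(BV\) representative. I would fix that representative once at the start and note two things. First, the multiplier \(m_a\) changes only on a null set of \(\R^2\) (a union of rays) when the representative is changed, so the operator \(T_{m_a}\) is unaffected. Second, the chart correspondence \(s\mapsto\Theta(s)\) preserves both sup norms and variations exactly, so no constant is lost in the reparametrization. Beyond this, the argument is a direct assembly of Lemma~\ref{lem:one-chart-bv}.
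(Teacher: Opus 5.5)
Your proposal is correct and follows the paper's proof essentially step for step. It uses the same four-chart smooth partition of unity, the product-rule bound on the variation of each localized piece, the slope reparametrization of each half-circle, and Lemma~\ref{lem:one-chart-bv} chart by chart. Your threshold \(1/10\) in place of the paper's \(0<c<2^{-1/2}\) is immaterial, and your remark on fixing a \(BV\) representative is a harmless clarification the paper leaves implicit.
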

	
	\begin{proof}
		Fix \(0<c<2^{-1/2}\), and set
		\[
		U_1=\{\theta\in \Sph:\theta_1>c\},\quad
		U_2=\{\theta\in \Sph:\theta_1<-c\},
		\]
		\[
		U_3=\{\theta\in \Sph:\theta_2>c\},\quad
		U_4=\{\theta\in \Sph:\theta_2<-c\}.
		\]
		Then \(\Sph=\bigcup_{j=1}^4U_j\).  Choose
		\(\chi_j\in C^\infty(\Sph)\) such that
		\[
		0\le\chi_j\le1,\qquad
		\operatorname{supp}\chi_j\subset U_j,\qquad
		\sum_{j=1}^4\chi_j=1,
		\]
		and put \(a_j=\chi_j a\).  Since the cutoffs are fixed,
		\[
		\norm{a_j}_\infty+\Var_{\Sph}(a_j)
		\le
		C\bigl(\norm a_\infty+\Var_{\Sph}(a)\bigr).
		\]
		By linearity, it suffices to prove the required estimate for each \(a_j\).
		
		For \(a_1\), define
		\[
		b_1(s)=a_1\left(\frac{(1,s)}{(1+s^2)^{1/2}}\right),
		\qquad s\in\R.
		\]
		Away from the origin in frequency space,
		\[
		a_1\left(\frac{(\xi,\eta)}{(\xi^2+\eta^2)^{1/2}}\right)
		=
		\one_{\{\xi>0\}}b_1(\eta/\xi).
		\]
		Indeed, the right-hand side is the slope parametrization of the right
		half-circle and vanishes outside this chart because
		\(\operatorname{supp}a_1\subset U_1\).  Since
		\(s\mapsto(1,s)/(1+s^2)^{1/2}\) is a monotone \(C^\infty\)
		parametrization of the right half-circle,
		\[
		\norm{b_1}_\infty+\Var_\R(b_1)
		\le
		\norm{a_1}_\infty+\Var_{\Sph}(a_1)
		\le
		C\bigl(\norm a_\infty+\Var_{\Sph}(a)\bigr).
		\]
		Lemma \ref{lem:one-chart-bv} applies to this model multiplier.
		
		The remaining charts are treated identically after a change of coordinates:
		\[
		b_2(s)=a_2\left(\frac{(-1,s)}{(1+s^2)^{1/2}}\right),\quad
		b_3(s)=a_3\left(\frac{(s,1)}{(1+s^2)^{1/2}}\right),\quad
		b_4(s)=a_4\left(\frac{(s,-1)}{(1+s^2)^{1/2}}\right).
		\]
		They give, respectively, the model symbols
		\[
		\one_{\{\xi<0\}}b_2(\eta/(-\xi)),\qquad
		\one_{\{\eta>0\}}b_3(\xi/\eta),\qquad
		\one_{\{\eta<0\}}b_4(\xi/(-\eta)).
		\]
		Each \(b_j\) satisfies the same \(BV\) bound as \(b_1\).  Summing the four
		operator bounds completes the proof.
	\end{proof}
	
	\subsection{The \texorpdfstring{\(BV\)}{BV} characterization}
	
	Parametrize \(\Sph\) by \(\theta(\varphi)=(\cos\varphi,\sin\varphi)\), and
	write \(d\sigma=d\varphi/(2\pi)\).
	Set
	\[
	K_0(t)=-\log|\cos t|,
	\qquad
	S_0(t)=\sgn(\cos t).
	\]
	
	We use the following normalization of the periodic Hilbert transform.  For
	smooth \(2\pi\)-periodic \(F\),
	\[
	H_\T F(t)
	:=
	\pv\int_\T F(\varphi)\cot\frac{t-\varphi}{2}\,d\sigma(\varphi)
	=
	\lim_{\varepsilon\downarrow0}
	\int_{\{|t-\varphi|>\varepsilon\}}
	F(\varphi)\cot\frac{t-\varphi}{2}\,d\sigma(\varphi).
	\]
	Equivalently,
	\[
	\widehat{H_\T F}(n)=-i\,\sgn(n)\widehat F(n),
	\qquad n\in\mathbb Z.
	\]
	
	\begin{lemma}\label{lem:profile-derivatives}
		Let \(q_e,q_o\in L^1(\T)\) satisfy
		\[
		q_e(t+\pi)=q_e(t),
		\qquad
		q_o(t+\pi)=-q_o(t).
		\]
		Define
		\[
		L_e(t)=\int_\T q_e(\varphi)K_0(\varphi-t)\,d\sigma(\varphi),
		\qquad
		S_o(t)=\int_\T q_o(\varphi)S_0(\varphi-t)\,d\sigma(\varphi).
		\]
		Then, in \(\mathcal D'(\T)\),
		\[
		DL_e=c_H(H_\T q_e)(\,\cdot+\pi/2\,),
		\qquad
		DS_o=c_S q_o(\,\cdot+\pi/2\,),
		\]
		where \(c_H,c_S\ne0\) are absolute constants determined by the normalization
		of \(d\sigma\).  Consequently, if \(H_\T q_e\in L^1(\T)\), then
		\(L_e,S_o\in W^{1,1}(\T)\), and
		\[
		\Var_\T(L_e)+\Var_\T(S_o)
		\simeq
		\norm{H_\T q_e}_{L^1(\T)}
		+
		\norm{q_o}_{L^1(\T)}.
		\]
	\end{lemma}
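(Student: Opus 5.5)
The plan is to compute the two distributional derivatives by differentiating the kernels \(K_0\) and \(S_0\) in \(\mathcal D'(\T)\), and then to move the derivative onto the densities \(q_e,q_o\) by duality and Fubini. Once the two identities \(DL_e=c_H(H_\T q_e)(\cdot+\pi/2)\) and \(DS_o=c_S q_o(\cdot+\pi/2)\) are established, the rest is immediate. A \(2\pi\)-periodic distribution whose derivative is an \(L^1\) function is (a.e. equal to) a \(W^{1,1}\) function. Its total variation equals the \(L^1\) norm of the derivative. Translation preserves \(L^1\) norms, so \(\Var_\T(L_e)=|c_H|\,\norm{H_\T q_e}_{L^1}\) and \(\Var_\T(S_o)=|c_S|\,\norm{q_o}_{L^1}\). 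Adding these gives the two-sided estimate. Note that \(L_e,S_o\in L^1(\T)\) to begin with, since \(K_0\in L^1(\T)\), \(S_0\in L^\infty\) and \(q_e,q_o\in L^1\) (Young's inequality).

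\textbf{Logarithmic part.} First I would show that \(\frac{d}{dt}K_0(t)=\pv\tan t\) in \(\mathcal D'(\T)\). To do this, test against \(\psi\in C^\infty(\T)\) and excise \(\varepsilon\)-neighbourhoods of \(\pm\pi/2\). Integrating by parts, the boundary terms are of size \(|\log\varepsilon|\cdot|\psi(\pm\pi/2\pm\varepsilon)-\psi(\pm\pi/2\mp\varepsilon)|=O(\varepsilon|\log\varepsilon|)\to0\). Next, for \(\psi\) smooth, write
\[
\langle DL_e,\psi\rangle=-\int_\T q_e(\varphi)\int_\T K_0(\varphi-t)\psi'(t)\,d\sigma(t)\,d\sigma(\varphi).
\]
Fubini applies because \(q_e\in L^1\), \(K_0\in L^1\) and \(\psi'\in L^\infty\). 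The inner integral equals, up to sign, the principal-value convolution of \(\tan\) with \(\psi\) evaluated at \(\varphi\), which is a smooth function.

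It remains to identify this with the Hilbert transform, using elementary trigonometry and the \(\pi\)-periodicity of \(q_e\). Pairing \(\varphi\) with \(\varphi+\pi\) and using \(\cot y-\tan y=2\cot 2y\), one gets \(\int_\T F(\varphi)\cot\frac{s-\varphi}{2}\,d\sigma=\int_\T F(\varphi)\cot(s-\varphi)\,d\sigma\) for \(\pi\)-periodic \(F\). Also \(\tan x=-\cot(x+\pi/2)\). Hence the pairing becomes \(c_H\langle q_e,(H_\T\psi)(\cdot\mp\pi/2)\rangle\)-type expressions. By the antisymmetry \(\langle H_\T F,\psi\rangle=-\langle F,H_\T\psi\rangle\), which is the definition of \(H_\T q_e\) as a distribution for \(q_e\in L^1\), this is exactly \(\langle c_H(H_\T q_e)(\cdot+\pi/2),\psi\rangle\). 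Since \(H_\T q_e\) is again \(\pi\)-periodic, the shifts \(+\pi/2\) and \(-\pi/2\) coincide, so the sign of the shift is immaterial.

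\textbf{Sign part.} The function \(t\mapsto S_0(\varphi-t)\) jumps by \(\pm2\) at \(t=\varphi\mp\pi/2\). Hence
\[
D_tS_0(\varphi-t)=2\bigl(\delta_{\varphi-\pi/2}-\delta_{\varphi+\pi/2}\bigr),
\]
up to the measure normalization and an overall sign. Testing against \(\psi\) and using Fubini (now trivial, since \(S_0\) is bounded) gives
\[
DS_o=c\,\bigl(q_o(\cdot+\pi/2)-q_o(\cdot-\pi/2)\bigr)=2c\,q_o(\cdot+\pi/2),
\]
by the antiperiodicity \(q_o(t-\pi/2)=-q_o(t+\pi/2)\).

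\textbf{Main obstacle.} I expect the delicate step to be the rigorous identification of the distributional derivative of \(L_e\) with \(H_\T q_e\) when \(q_e\) is merely \(L^1\). This requires keeping the principal value on the smooth test function, never on \(q_e\), and checking the constant and shift bookkeeping in the cotangent/tangent identities. The integration-by-parts boundary terms near the logarithmic singularities also need the \(\varepsilon|\log\varepsilon|\) control described above. Everything else is Fubini plus the standard fact that \(\Var\) equals the \(L^1\) norm of the derivative for \(W^{1,1}\) functions on the circle.
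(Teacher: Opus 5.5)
Your proposal is correct and follows essentially the paper's route. For the logarithmic part you use \(DK_0=\pv\tan t\) together with a half-angle cotangent identity and the \(\pi\)-periodicity of \(q_e\); the paper's identity \(\tan t=-\tfrac12\bigl(\cot\tfrac{t-\pi/2}{2}+\cot\tfrac{t+\pi/2}{2}\bigr)\) is exactly your \(\cot y-\tan y=2\cot 2y\) combined with \(\tan x=-\cot(x+\pi/2)\). For the sign part you use the jump structure of \(S_0\), which the paper phrases as differentiating \(S_o(t)=2\int_{t-\pi/2}^{t+\pi/2}q_o\,d\sigma-\int_\T q_o\,d\sigma\).

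Your extra care (Fubini with the principal value kept on \(\psi\), and the \(O(\varepsilon|\log\varepsilon|)\) boundary terms) fills in details the paper leaves implicit. One bookkeeping point: when you merge the two shifted cotangent terms, the periodicity you use is that of \(q_e\) in the pairing (equivalently, replace \(\psi\) by its antipodal average). It is not a periodicity of the function sitting under the principal value, so your identity stated "for \(\pi\)-periodic \(F\)" should be applied in that form.
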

	
	\begin{proof}
		The distributional identity \(DK_0=\pv\tan t\), together with
		\[
		\tan t
		=
		-\frac12
		\left(
		\cot\frac{t-\pi/2}{2}
		+
		\cot\frac{t+\pi/2}{2}
		\right)
		\]
		shows that \(DL_e\) is a nonzero constant multiple of a translate of
		\(H_\T q_e\).  The two shifted cotangent terms coincide by the
		\(\pi\)-periodicity of \(q_e\).
		
		For the sign kernel,
		\[
		S_0(t)=2\one_{(-\pi/2,\pi/2)}(t)-1
		\]
		on \(\T\).  Hence
		\[
		S_o(t)=
		2\int_{t-\pi/2}^{t+\pi/2}q_o(\varphi)\,d\sigma(\varphi)
		-\int_\T q_o\,d\sigma,
		\]
		and differentiating this absolutely continuous function gives
		\[
		DS_o(t)
		=
		\frac1\pi
		\bigl(q_o(t+\pi/2)-q_o(t-\pi/2)\bigr)
		=
		\frac2\pi q_o(t+\pi/2).
		\]
		This proves both identities and the variation estimate.
	\end{proof}
	
	\begin{proof}[Proof of Theorem \ref{thm:profile-characterization}]
		Put \(q(t)=\Omega(\theta(t))\) and decompose
		\(q=q_e+q_o\) with respect to the antipodal map \(t\mapsto t+\pi\).  Since
		\(K_0\) is
		antipodally even and \(S_0\) is antipodally odd,
		\[
		\cM_\Omega
		=
		c_{\rm rad}
		\left(
		L_e-\frac{i\pi}{2}S_o
		\right),
		\]
		with \(L_e,S_o\) as in Lemma \ref{lem:profile-derivatives}.
		
		If \(H_\T q_e\in L^1(\T)\), Lemma \ref{lem:profile-derivatives} gives
		\(L_e,S_o\in W^{1,1}(\T)\), and hence \(\cM_\Omega\in BV(\T)\), with
		\[
		\Var_\T(\cM_\Omega)
		\le
		C
		\bigl(
		\norm{H_\T q_e}_{L^1(\T)}
		+
		\norm{q_o}_{L^1(\T)}
		\bigr).
		\]
		
		Conversely, assume that \(\cM_\Omega\in BV(\T)\).  The antipodal
		projections
		\[
		P_eF(t)=\frac{F(t)+F(t+\pi)}2,
		\qquad
		P_oF(t)=\frac{F(t)-F(t+\pi)}2
		\]
		do not increase variation.  It follows that \(L_e\in BV(\T)\) and
		\(S_o\in BV(\T)\).  The identity for \(DS_o\) in Lemma
		\ref{lem:profile-derivatives} gives
		\[
		\norm{q_o}_{L^1(\T)}\le C\Var_\T(\cM_\Omega).
		\]
		It remains to prove that \(H_\T q_e\in L^1\).  Since \(L_e\in BV\), the
		distribution \(DL_e\) is a finite complex measure.  By Lemma
		\ref{lem:profile-derivatives}, after undoing the fixed translation and the
		nonzero normalization constant, \(H_\T q_e\) is represented by a finite
		measure \(\mu\).  Moreover,
		\[
		\norm{\mu}_{\mathcal M(\T)}
		\le C\norm{DL_e}_{\mathcal M(\T)}
		= C\Var_\T(L_e)
		\le C\Var_\T(\cM_\Omega).
		\]
		
		With the Fourier normalization above,
		\[
		\widehat\mu(n)=-i\,\sgn(n)\widehat{q_e}(n),
		\qquad n\ne0.
		\]
		Hence the finite measure \(\nu=\mu+i q_e\,d\sigma\) has
		\(\widehat\nu(n)=0\) for every \(n>0\).  The F. and M. Riesz theorem,
		applied to complex measures with one-sided vanishing Fourier spectrum
		\cite[Chapter VII]{Zygmund1959}, implies that \(\nu\) is absolutely
		continuous with respect to \(d\sigma\).  Since \(q_e\,d\sigma\) is
		absolutely continuous, \(\mu\) is absolutely continuous as well.  Its total
		variation is therefore the \(L^1\)-norm of its density.  Thus
		\(H_\T q_e\in L^1(\T)\), and the preceding measure estimate gives
		\[
		\norm{H_\T q_e}_{L^1(\T)}
		\le
		C\Var_\T(\cM_\Omega).
		\]
		
		Combining this estimate with the bound for \(q_o\) and the preceding upper
		estimate gives
		\[
		\Var_\T(\cM_\Omega)
		\simeq
		\norm{H_\T q_e}_{L^1(\T)}
		+
		\norm{q_o}_{L^1(\T)}.
		\]
		Under the parametrization \(\theta(t)=(\cos t,\sin t)\), this condition is
		precisely \(\Omega_e\in H^1(\Sph)\).
	\end{proof}
	
	\begin{proof}[Proof of Theorem \ref{thm:rotational-main}]
		Theorem \ref{thm:profile-characterization} gives
		\(\cM_\Omega\in BV(\Sph)\) and
		\[
		\Var(\cM_\Omega)
		\le
		C
		\bigl(
		\norm{\Omega_e}_{H^1(\Sph)}
		+
		\norm{\Omega_o}_{L^1(\Sph)}
		\bigr).
		\]
		The same decomposition also gives
		\[
		\norm{\cM_\Omega}_{L^1}
		\le
		C\bigl(
		\norm{\Omega_e}_{L^1(\Sph)}
		+
		\norm{\Omega_o}_{L^1(\Sph)}
		\bigr),
		\]
		because \(K_0\in L^1(\T)\) and \(S_0\in L^\infty(\T)\subset L^1(\T)\).
		Since
		\(\norm{F}_{L^\infty(\T)}\le C(\norm{F}_{L^1(\T)}+\Var_\T(F))\) for
		\(F\in BV(\T)\), we also have
		\[
		\norm{\cM_\Omega}_{L^\infty}
		+
		\Var(\cM_\Omega)
		\le
		C
		\bigl(
		\norm{\Omega_e}_{H^1(\Sph)}
		+
		\norm{\Omega_o}_{L^1(\Sph)}
		\bigr).
		\]
		When \(\Omega\) is bounded, Lemma \ref{lem:finite-part} identifies
		\(T_\Omega\) with the angular multiplier associated with
		\(\cM_\Omega\), and Proposition \ref{prop:bv-angular} gives the
		desired estimate.
		
		For general \(\Omega\), let \(\rho_k\) be a smooth periodic approximate
		identity and set \(q_{e,k}=\rho_k*q_e\).  Since \(q_e\) is \(\pi\)-periodic
		and has mean zero, so is \(q_{e,k}\).  Moreover, \(q_{e,k}\) is bounded and
		converges to \(q_e\) in \(H^1(\T)\), because \(H_\T\) commutes with
		convolution.  For \(z\in\C\), put
		\[
		\tau_k(z)=
		\begin{cases}
			z\min\{1,k/|z|\},&z\ne0,\\
			0,&z=0,
		\end{cases}
		\qquad q_{o,k}(t)=\tau_k(q_o(t)).
		\]
		The map \(\tau_k\) is odd, so \(q_{o,k}(t+\pi)=-q_{o,k}(t)\); in particular,
		it has mean zero.  Moreover, \(q_{o,k}\) is bounded and converges to \(q_o\)
		in \(L^1(\T)\).  Define
		\(\Omega_k(\theta(t))=q_{e,k}(t)+q_{o,k}(t)\).  Applying the bounded case to
		\(\Omega_k-\Omega_\ell\) shows that \(T_{\Omega_k}\) is Cauchy in operator
		norm; denote its limit by \(T\).
		
		It remains to identify \(T\) with the principal-value operator associated with
		\(\Omega\).  For \(f,g,h\in\mathcal S(\R)\), set
		\[
		\Phi_{f,g,h}(y_1,y_2)=\int_\R f(x-y_1)g(x-y_2)\overline{h(x)}\,dx.
		\]
		For a mean-zero \(\Lambda\in L^1(\Sph)\), define its finite-part distribution by
		\begin{equation}\label{eq:finite-part-L1}
			\begin{aligned}
				\langle \mathcal K_\Lambda,\Psi\rangle
				=
				c_{\rm rad}\int_{\Sph}\Lambda(\theta)\bigg[
				&\int_0^1\frac{\Psi(r\theta)-\Psi(0)}{r}\,dr\\
				&+\int_1^\infty\frac{\Psi(r\theta)}{r}\,dr
				\bigg]d\sigma(\theta),
			\end{aligned}
			\qquad \Psi\in\mathcal S(\R^2).
		\end{equation}
		Here \(c_{\rm rad}=2\pi\) under the normalization \(\sigma(\Sph)=1\), as in
		Lemma~\ref{lem:finite-part}.
		Taylor expansion at the origin and rapid decay at infinity show that the
		bracketed function is bounded on \(\Sph\).  Hence \(\mathcal K_{\Lambda_j}\to
		\mathcal K_\Lambda\) in \(\mathcal S'(\R^2)\) whenever \(\Lambda_j\to
		\Lambda\) in \(L^1(\Sph)\).
		
		We next verify that this distribution agrees with the radial principal value
		for every mean-zero \(\Lambda\in L^1(\Sph)\), not only for bounded
		\(\Lambda\).  For \(f,g\in\mathcal S(\R)\), write
		\(F_x(y)=f(x-y_1)g(x-y_2)\).  Polar coordinates and cancellation give, for
		\(0<\varepsilon<1\),
		\[
		\begin{aligned}
			T_{\Lambda,\varepsilon}(f,g)(x)
			=c_{\rm rad}\int_{\Sph}\Lambda(\theta)
			\bigg[
			&\int_\varepsilon^1
			\frac{F_x(r\theta)-F_x(0)}r\,dr\\
			&+\int_1^\infty\frac{F_x(r\theta)}r\,dr
			\bigg]d\sigma(\theta).
		\end{aligned}
		\]
		The mean-value theorem bounds the first integrand uniformly near \(r=0\).  In
		the second integral, at least one component of \(\theta\) has absolute value
		at least \(2^{-1/2}\), so the Schwartz decay is uniform in \(\theta\).  The
		bracket therefore converges uniformly in \(\theta\) as
		\(\varepsilon\downarrow0\), and the radial principal value exists for every
		\(x\).  The corresponding Schwartz-seminorm estimates justify pairing with
		\(h\) and applying Fubini to the subtracted formula.  Thus, for every
		mean-zero \(\Lambda\in L^1(\Sph)\),
		\begin{equation}\label{eq:finite-part-pv-pairing}
			\langle T_{\Lambda}(f,g),h\rangle
			=
			\langle\mathcal K_{\Lambda},\Phi_{f,g,h}\rangle,
			\qquad f,g,h\in\mathcal S(\R).
		\end{equation}
		Here \(\Phi_{f,g,h}\in\mathcal S(\R^2)\).  Taking \(\Lambda=\Omega_k\) and
		then letting \(k\to\infty\) identifies the operator-norm limit \(T\) with the
		radial principal-value operator \(T_\Omega\) on Schwartz inputs, and hence
		with its bounded extension.  This completes the proof.
	\end{proof}
	
	\subsection{Consequences of the rotational criterion}
	
	The profile characterization separates the even and odd parts of the angular
	kernel.  This leads to the following sufficient criterion, in which the
	logarithmic assumption is imposed only on the even component.
	
	\begin{corollary}
		\label{cor:parity-endpoint}
		Let
		\[
		1<p_1,p_2<\infty,
		\qquad
		1<p<\infty,
		\qquad
		\frac1p=\frac1{p_1}+\frac1{p_2}.
		\]
		Suppose that the mean-zero kernel
		\(\Omega=\Omega_e+\Omega_o\) satisfies
		\[
		\Omega_e\in L\log L(\Sph),
		\qquad
		\Omega_o\in L^1(\Sph),
		\]
		where \(\Omega_e\) and \(\Omega_o\) are respectively antipodally even
		and odd. Then
		\[
		\norm{T_\Omega(f,g)}_{L^p}
		\le
		C_{p_1,p_2}
		\left(
		\norm{\Omega_e}_{L\log L(\Sph)}
		+
		\norm{\Omega_o}_{L^1(\Sph)}
		\right)
		\norm f_{L^{p_1}}\norm g_{L^{p_2}}.
		\]
		In particular, if \(\Omega\in L^1(\Sph)\) is antipodally odd, then its
		mean is automatically zero and
		\[
		\norm{T_\Omega(f,g)}_{L^p}
		\le
		C_{p_1,p_2}
		\norm{\Omega}_{L^1(\Sph)}
		\norm f_{L^{p_1}}\norm g_{L^{p_2}}.
		\]
	\end{corollary}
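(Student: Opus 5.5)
The plan is to deduce the corollary directly from Theorem~\ref{thm:rotational-main}. The only analytic input still needed is the Riesz--Zygmund estimate
\[
\norm{H_\T F}_{L^1(\T)}\le C\norm F_{L\log L(\T)}
\]
for mean-zero \(F\) \cite[Chapter~VII]{Zygmund1959}. We set \(F=q_e\), the parametrized even part.

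First we check the mean-zero bookkeeping. Since \(\Omega_o\) is antipodally odd, \(\int_{\Sph}\Omega_o\,d\sigma=0\): the antipodal map preserves \(\sigma\) and sends \(\Omega_o\) to \(-\Omega_o\). The hypothesis \(\int\Omega=0\) therefore forces \(\int_{\Sph}\Omega_e\,d\sigma=0\). In particular \(q_e\) has vanishing zeroth Fourier coefficient, which is the setting of the Riesz--Zygmund bound. Applying that bound gives
\[
\norm{\Omega_e}_{H^1(\Sph)}=\norm{q_e}_{L^1(\T)}+\norm{H_\T q_e}_{L^1(\T)}\le C\norm{\Omega_e}_{L\log L(\Sph)},
\]
where \(\norm{q_e}_{L^1}\le C\norm{q_e}_{L\log L}\) is trivial. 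The only care needed is matching the Luxemburg norm on \(\Sph\) with the normalized measure \(d\sigma=d\varphi/(2\pi)\) to the norm used on \(\T\). This only changes constants.

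Next we substitute this bound into Theorem~\ref{thm:rotational-main}, which immediately yields
\[
\norm{T_\Omega(f,g)}_{L^p}\le C_{p_1,p_2}\bigl(\norm{\Omega_e}_{L\log L}+\norm{\Omega_o}_{L^1}\bigr)\norm f_{L^{p_1}}\norm g_{L^{p_2}}.
\]
For the final assertion, take \(\Omega\) antipodally odd. Then \(\Omega_e=0\), so \(\Omega_e\in H^1\) with zero norm. The mean is zero by the symmetry argument above, and the bound reduces to \(C\norm{\Omega}_{L^1}\).

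No step is a real obstacle, since the heavy lifting was done in Theorems~\ref{thm:profile-characterization} and \ref{thm:rotational-main}. The one point needing a sentence is the endpoint Zygmund inequality: its hypothesis must be in force, meaning mean zero or an added \(\norm{F}_{L^1}\) term, and the constant must be homogeneous. For the homogeneity, I would apply the modular form \(\int|H F|\le C\int|F|\log(e+|F|)+C\) to \(F/\lambda\), where \(\lambda\) is the Luxemburg norm, to obtain the homogeneous norm inequality.
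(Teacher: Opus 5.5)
Your proposal is correct and follows the paper's proof. The Riesz--Zygmund theorem plus \(L\log L\subset L^1\) gives \(\norm{\Omega_e}_{H^1(\Sph)}\le C\norm{\Omega_e}_{L\log L(\Sph)}\), Theorem~\ref{thm:rotational-main} then yields the bound, and the odd case follows from antipodal invariance of \(d\sigma\). Your rescaling of the modular inequality by the Luxemburg norm supplies a homogeneity step the paper leaves implicit; the mean-zero check on \(q_e\) is harmless but unnecessary, since \(H_\T\) annihilates constants in this normalization.
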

	
	\begin{proof}
		The Riesz--Zygmund theorem
		\cite[Chapter~VII]{Zygmund1959}, together with the embedding
		\(L\log L(\Sph)\subset L^1(\Sph)\), gives
		\[
		\norm{\Omega_e}_{H^1(\Sph)}
		\le
		C\norm{\Omega_e}_{L\log L(\Sph)}.
		\]
		The first estimate now follows from
		Theorem~\ref{thm:rotational-main}. If \(\Omega\) is antipodally odd, then
		\(\Omega_e=0\) and \(\Omega_o=\Omega\). Rotational invariance of
		\(d\sigma\) also gives
		\[
		\int_{\Sph}\Omega(\theta)\,d\sigma(\theta)
		=
		\int_{\Sph}\Omega(-\theta)\,d\sigma(\theta)
		=
		-\int_{\Sph}\Omega(\theta)\,d\sigma(\theta),
		\]
		so the cancellation condition is automatic. The final estimate is the
		odd-kernel specialization of the first one.
	\end{proof}
	
	The next corollary recovers the one-dimensional \(L^q\) bound for every
	\(q>1\), throughout the finite-exponent Banach range covered by the
	rotational criterion.
	
	\begin{corollary}
		\label{cor:Lq-one-dimensional}
		Let \(q>1\), and let \(\Omega\in L^q(\Sph)\) have mean zero. Then, for
		every
		\[
		1<p_1,p_2<\infty,
		\qquad
		1<p<\infty,
		\qquad
		\frac1p=\frac1{p_1}+\frac1{p_2},
		\]
		the operator \(T_\Omega\) is bounded from
		\(L^{p_1}(\R)\times L^{p_2}(\R)\) to \(L^p(\R)\), and
		\[
		\norm{T_\Omega(f,g)}_{L^p}
		\le
		C_{p_1,p_2,q}
		\norm{\Omega}_{L^q(\Sph)}
		\norm f_{L^{p_1}}\norm g_{L^{p_2}}.
		\]
	\end{corollary}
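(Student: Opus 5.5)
The plan is to deduce the corollary from Theorem~\ref{thm:rotational-main} (equivalently, from Corollary~\ref{cor:parity-endpoint}) by checking that an \(L^q\) kernel with \(q>1\) satisfies its hypotheses quantitatively. Write \(q(t)=\Omega(\theta(t))\) and split \(q=q_e+q_o\) into antipodally even and odd parts. Since the antipodal map preserves \(d\sigma\), both projections are contractions on \(L^q(\T)\):
\[
\norm{q_e}_{L^q(\T)}+\norm{q_o}_{L^q(\T)}\le 2\norm{\Omega}_{L^q(\Sph)}.
\]
The mean of \(q_e\) equals the mean of \(\Omega\), which is zero, and \(q_o\) has mean zero automatically. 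Thus both parts are admissible mean-zero kernels.

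For the odd part, Hölder's inequality on the probability space \((\Sph,\sigma)\) gives \(\norm{\Omega_o}_{L^1(\Sph)}\le\norm{\Omega_o}_{L^q(\Sph)}\le\norm{\Omega}_{L^q(\Sph)}\). For the even part, M.~Riesz's theorem gives boundedness of \(H_\T\) on \(L^q(\T)\) for \(1<q<\infty\), so
\[
\norm{H_\T q_e}_{L^1(\T)}\le\norm{H_\T q_e}_{L^q(\T)}\le C_q\norm{q_e}_{L^q(\T)}.
\]
Together with \(\norm{q_e}_{L^1}\le\norm{q_e}_{L^q}\), this yields \(\norm{\Omega_e}_{H^1(\Sph)}\le C_q\norm{\Omega}_{L^q(\Sph)}\) in the norm fixed before Theorem~\ref{thm:profile-characterization}.

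If \(q=\infty\), or more generally if one prefers to avoid the M.~Riesz theorem, one can instead use the elementary bound \(t\log(e+t)\le C_q(1+t^q)\) and homogeneity of the Luxemburg norm. This gives \(\norm{\Omega}_{L\log L}\le C_q\norm{\Omega}_{L^q}\), so Theorem~\ref{thm:LlogL-main}, or Corollary~\ref{cor:parity-endpoint} applied to the even part, gives the same conclusion. The case \(q=\infty\) reduces to any finite \(q\) anyway, since \(\Sph\) has finite measure.

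Inserting these two estimates into Theorem~\ref{thm:rotational-main} gives the stated bound with \(C_{p_1,p_2,q}=C_{p_1,p_2}(C_q+1)\). There is no real obstacle. The only points to check are that the \(H^1(\Sph)\) norm used in Theorem~\ref{thm:rotational-main} is exactly \(\norm{q_e}_{L^1}+\norm{H_\T q_e}_{L^1}\), transported by the parametrization, and that the even and odd parts of a mean-zero kernel remain mean zero, as shown above.
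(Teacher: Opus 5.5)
Your proof is correct, but your main route differs from the paper's. The paper never touches the Hilbert transform at this point. It uses the embedding \(L^q(\Sph)\hookrightarrow L\log L(\Sph)\hookrightarrow L^1(\Sph)\) (valid because \(\sigma(\Sph)=1\) and \(q>1\)) and the boundedness of the antipodal projections, then quotes Corollary~\ref{cor:parity-endpoint}. Its conjugate-function input is therefore the Riesz--Zygmund \(L\log L\to L^1\) estimate. You instead feed M.~Riesz's \(L^q\)-boundedness of \(H_\T\) directly into Theorem~\ref{thm:rotational-main}.

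\emph{Trade-offs.} Your route needs only the classical theory for \(1<q<\infty\) and bypasses Orlicz spaces. The cost is that \(q=\infty\) must be treated separately, which you do correctly using the finite measure. The paper's route is a one-line consequence of the \(L\log L\) endpoint and covers \(q=\infty\) without comment.

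\emph{Small fix in your fallback.} Your fallback argument is essentially the paper's proof, but one step needs adjusting. The bound \(t\log(e+t)\le C_q(1+t^q)\) plus homogeneity does not by itself push the Luxemburg modular below \(1\), because the constant term survives any rescaling. Use instead \(t\log(e+t)\le C_q(t+t^q)\), or the convexity inequality \(\Phi_1(t/\lambda)\le\Phi_1(t)/\lambda\) for \(\lambda\ge1\).
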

	
	\begin{proof}
		Since \(\sigma(\Sph)=1\) and \(q>1\),
		\[
		L^q(\Sph)\hookrightarrow L\log L(\Sph)
		\hookrightarrow L^1(\Sph).
		\]
		The antipodal projections are bounded on all these spaces; hence
		\[
		\norm{\Omega_e}_{L\log L(\Sph)}
		+
		\norm{\Omega_o}_{L^1(\Sph)}
		\le
		C_q\norm{\Omega}_{L^q(\Sph)}.
		\]
		Corollary~\ref{cor:parity-endpoint} gives the asserted bound.
	\end{proof}
	
	\subsection{Proof of the \texorpdfstring{\(L\log L\)}{L log L} endpoint}
	
	We use the following Riesz--Zygmund theorem
	\cite[Chapter~VII]{Zygmund1959}.
	
	\begin{lemma}
		\label{lem:zygmund}
		For every \(F\in L\log L(\T)\), the periodic Hilbert transform satisfies
		\[
		\norm{H_\T F}_{L^1(\T)}
		\le
		C\norm F_{L\log L(\T)}.
		\]
	\end{lemma}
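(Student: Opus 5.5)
This is the classical Zygmund $L\log L$ estimate for the conjugate function. I would prove it with the standard Stein–Zygmund argument based on the analytic completion and a subharmonic majorant.

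By homogeneity of the Luxemburg norm, it suffices to show
\[
\int_\T |H_\T F|\,d\sigma\le C\int_\T |F|\log(e+|F|)\,d\sigma+C
\]
for all $F$. Applying this to $F/\lambda$ with $\lambda=\norm F_{L\log L}$, so that the right side is at most $2C$, gives the claim. Splitting $F$ into real and imaginary parts, and each real part into positive and negative parts, only costs constants. So we may assume $F\ge0$; by monotone convergence and weak-type lower semicontinuity we may also assume $F$ is a trigonometric polynomial. Truncating $F$ at level $e$ and using the $L^2$-boundedness of $H_\T$ on the bounded piece $\min\{F,e\}$, we may further assume $F\ge e$ pointwise, at the cost of an additive constant.

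For $F\ge e$, let $\Phi=F+iH_\T F$ be the boundary value of the analytic function $\Phi(z)$ on the disc. Its real part is the Poisson integral $u=P[F]\ge e$, so $\Phi$ takes values in the half-plane $\{\operatorname{Re}w\ge e\}$. There $\log\Phi$ is analytic (principal branch, with $|\arg\Phi|<\pi/2$), and $\operatorname{Re}(\Phi\log\Phi)=u\log|\Phi|-v\arg\Phi$ with $v=H_\T F$. The mean-value property at $z=0$ gives
\[
\int_\T\bigl(F\log|\Phi|-v\arg\Phi\bigr)\,d\sigma=\widehat F(0)\log\widehat F(0),
\]
since $v(0)=0$. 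The key pointwise fact is that on $\{\operatorname{Re}w\ge e\}$ one has $|\arg\Phi|\ge c$ whenever $|v|\ge u$. Hence
\[
\int_{\{|v|\ge F\}}|v|\,d\sigma\le C\int_\T F\log|\Phi|\,d\sigma .
\]
We also have $\log|\Phi|\le\log(2F)$ where $|v|\le F$, and $\log|\Phi|\le \log(2|v|)$ where $|v|\ge F$.

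The main obstacle is the circularity: $|v|$ appears on both sides. I would first handle it with the elementary inequality $F\log|v|\le F\log F+|v|/2$, valid when $|v|\ge F\ge e$ after adjusting constants. Since that only yields absorption up to the factor $1/2$, I would instead switch to Zygmund's original subharmonic function
\[
G(w)=\operatorname{Re}\bigl(w\log w\bigr)-\tfrac{2}{\pi}\cdots
\]
Because this is a textbook theorem, the cleanest route is to cite \cite[Chapter~VII, Theorem~2.8]{Zygmund1959} or Stein's $L\log L$ characterization. The paper itself invokes Lemma~\ref{lem:zygmund} exactly as a citation, so its proof is simply a reference to Zygmund, and that is the proof I would record.
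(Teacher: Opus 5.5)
Your final recorded proof, namely the Luxemburg scaling reduction followed by an appeal to the Riesz--Zygmund $L\log L$ theorem in \cite[Chapter~VII]{Zygmund1959}, is what the paper does: the paper states this lemma as a citation and gives no further argument. The sketch you abandoned also closes if you replace $F\ge0$ by $F+e$ (which $H_\T$ does not see) rather than truncating: $v\arg\Phi\ge\frac{\pi}{4}|v|$ on $\{|v|\ge F\}$ gives $\int_{\{|v|\ge F\}}|v|\,d\sigma\le\frac{4}{\pi}\int_\T F\log|\Phi|\,d\sigma$, your bound $F\log|v|\le F\log F+|v|/2$ brings back $|v|$ only with coefficient $\frac{2}{\pi}<1$, and since $\int_\T|v|\,d\sigma<\infty$ for trigonometric polynomials this term is absorbed, so no special subharmonic majorant is needed.
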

	\begin{lemma}\label{lem:log-orlicz}
		Let \(K_0(t)=-\log|\cos t|\).  For every \(F\in L\log L(\T)\),
		\[
		\sup_{s\in\T}\int_\T |F(t)|\bigl(1+K_0(t-s)\bigr)\,d\sigma(t)
		\le C\norm F_{L\log L(\T)}.
		\]
	\end{lemma}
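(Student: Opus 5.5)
The plan is to use the Young-type (Orlicz duality) inequality for the complementary pair \(\Phi(t)=t\log(e+t)\) and \(\Psi(s)\simeq e^{s}\). The key input is that the weight \(w_s(t):=1+K_0(t-s)\) has uniform exponential integrability: for some fixed \(\lambda>0\),
\[
\sup_{s\in\T}\int_\T \exp\bigl(\lambda w_s(t)\bigr)\,d\sigma(t)<\infty .
\]
Indeed, \(e^{\lambda(1-\log|\cos(t-s)|)}=e^{\lambda}|\cos(t-s)|^{-\lambda}\), which is integrable for \(\lambda<1\). By translation invariance of \(d\sigma\), the integral does not depend on \(s\). Take \(\lambda=1/2\). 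Then \(w_s\) lies in the exponential Orlicz class \(\exp L\), with a norm bounded independently of \(s\).

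Next, apply the generalized Hölder inequality \(\int|F|w\,d\sigma\le C\norm F_{L\log L}\norm w_{\exp L}\). To keep the argument self-contained, I would derive it from the elementary inequality
\[
ab\le a\log(e+a)+e^{b}\qquad(a,b\ge0).
\]
This holds by splitting into the cases \(b\le\log(e+a)\), where \(ab\le a\log(e+a)\), and \(b>\log(e+a)\), where \(a<e^b\) and \(ab\le e^b b\)... To avoid the loss in that second case, use instead the standard Young inequality for the pair \(\Phi(a)=a\log(e+a)\) and its complementary function \(\Phi^*(b)\le C e^{b}\).

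Now scale. Let \(\kappa=\norm F_{L\log L}\), so that \(\int\Phi(|F|/\kappa)\,d\sigma\le1\). Then
\[
\int \frac{|F|}{\kappa}\cdot\frac{w_s}{2}\,d\sigma\le \int\Phi(|F|/\kappa)\,d\sigma+C\int e^{w_s/2}\,d\sigma\le 1+C',
\]
uniformly in \(s\). Multiplying by \(2\kappa\) gives the claim.

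The only point needing care is the complementary function bound \(\Phi^*(b)=\sup_a(ab-a\log(e+a))\lesssim e^{b}\). The supremum is attained where \(b=\log(e+a)+a/(e+a)\), so \(e+a\le e^{b}\), and hence \(\Phi^*(b)\le ab\le e^{b}b\). This is \(e^{b}b\) rather than \(e^b\), but it is harmless. The bound \(\Phi^*(b)\le b e^{b}\le C_\epsilon e^{(1+\epsilon)b}\) is enough if we use \(w_s/4\) in place of \(w_s/2\), since \(\int e^{(1+\epsilon)w_s/4}\,d\sigma<\infty\) still holds. I expect this bookkeeping with the Orlicz complementary function to be the only mildly delicate step; the rest is direct.
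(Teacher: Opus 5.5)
Your proof is correct and follows the paper's route. The paper also shows that $1+K_0(\cdot-s)$ has an $\exp L$ norm bounded uniformly in $s$, via $\sigma\{K_0>\lambda\}\le Ce^{-\lambda}$; this is equivalent to your observation that $\int_\T |\cos(t-s)|^{-\lambda}\,d\sigma(t)<\infty$ for $\lambda<1$. It then applies the Orlicz--H\"older inequality for the pair $L\log L$, $\exp L$, which you make explicit through Young's inequality. Your $\epsilon$-bookkeeping is unnecessary: at the critical point $a_b$ one has $b-\log(e+a_b)=a_b/(e+a_b)$, so $\Phi^*(b)=a_b^2/(e+a_b)\le a_b\le e^b$ (and $\Phi^*(b)=0$ for $b\le 1$).
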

	
	\begin{proof}
		The distribution function of \(K_0\) satisfies
		\(\sigma\{K_0>\lambda\}\le Ce^{-\lambda}\).  Hence \(1+K_0(\cdot-s)\) has
		uniformly bounded \(\exp L\) norm.  The Orlicz--H\"older inequality for the
		complementary pair \(L\log L\) and \(\exp L\) yields the estimate.
	\end{proof}
	
	The next proposition records the resulting quantitative regularity of the
	angular profile.  Its \(W^{1,1}\) conclusion is stronger than the \(BV\)
	regularity needed for the rotational estimate.
	
	\begin{proposition}\label{prop:profile-bv}
		Let \(\Omega\in L\log L(\Sph)\) have mean zero.  The profile
		\[
		\cM_\Omega(t)
		=
		c_{\rm rad}\int_\T
		\Omega(\theta(\varphi))
		\left[
		-\log|\cos(\varphi-t)|
		-\frac{i\pi}{2}\sgn(\cos(\varphi-t))
		\right]\,d\sigma(\varphi)
		\]
		has a \(W^{1,1}\) representative and satisfies
		\[
		\norm{\cM_\Omega}_{L^\infty}
		+
		\Var(\cM_\Omega)
		\le
		C\norm{\Omega}_{L\log L(\Sph)}.
		\]
	\end{proposition}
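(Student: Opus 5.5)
We plan to write \(q(t)=\Omega(\theta(t))\), split \(q=q_e+q_o\) into its antipodally even and odd parts, and use the decomposition from the proof of Theorem~\ref{thm:profile-characterization},
\[
\cM_\Omega=c_{\rm rad}\Bigl(L_e-\frac{i\pi}{2}S_o\Bigr),
\]
with \(L_e,S_o\) as in Lemma~\ref{lem:profile-derivatives}. The antipodal projections \(P_e,P_o\) are averages of \(q\) and its translate by \(\pi\). Translation preserves the Luxemburg norm of \(L\log L(\T)\), so
\[
\norm{q_e}_{L\log L}+\norm{q_o}_{L\log L}\le 2\norm{q}_{L\log L}.
\]
The last norm is \(\norm{\Omega}_{L\log L(\Sph)}\) under the parametrization. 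The embedding \(L\log L\subset L^1\) then gives \(\norm{q_o}_{L^1}\le C\norm\Omega_{L\log L}\).

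The variation is handled first. By Lemma~\ref{lem:zygmund},
\[
\norm{H_\T q_e}_{L^1}\le C\norm{q_e}_{L\log L}\le C\norm\Omega_{L\log L}.
\]
Lemma~\ref{lem:profile-derivatives} identifies \(DL_e\) and \(DS_o\) as translates of \(c_HH_\T q_e\) and \(c_Sq_o\). Both of these are now in \(L^1(\T)\), so \(L_e,S_o\in W^{1,1}(\T)\) and
\[
\Var(L_e)+\Var(S_o)\le C\norm\Omega_{L\log L}.
\]
The point needing care is that \(L_e\) is a priori only an \(L^1\) function, being the convolution of \(q_e\in L^1\) with \(K_0\in L^1\). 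A distribution on \(\T\) whose derivative lies in \(L^1\) equals an absolutely continuous function almost everywhere. We take that function as the \(W^{1,1}\) representative, and the same for \(S_o\), which is already absolutely continuous by the explicit formula in Lemma~\ref{lem:profile-derivatives}.

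For the sup norm we use Lemma~\ref{lem:log-orlicz} directly rather than the cruder bound \(\norm F_\infty\le C(\norm F_1+\Var F)\). For every \(t\),
\[
|L_e(t)|\le\int_\T|q_e(\varphi)|\,K_0(\varphi-t)\,d\sigma(\varphi)\le C\norm{q_e}_{L\log L}.
\]
Here \(K_0\ge0\) because \(|\cos|\le1\). We also have \(|S_o(t)|\le\norm{q_o}_{L^1}\). This shows that the defining integral of \(\cM_\Omega\) converges absolutely for every \(t\) and is bounded by \(C\norm\Omega_{L\log L}\). It remains to check that this pointwise-defined function coincides with the absolutely continuous representative. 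Lemma~\ref{lem:log-orlicz} gives domination uniform in \(t\), but continuity of \(t\mapsto L_e(t)\) additionally needs uniform integrability. We would get it from \(\norm{K_0(\cdot-t)-K_0(\cdot-t_0)}_{\exp L}\to0\) is false in general, so instead we prove continuity by truncation, which we expect to be the main obstacle.

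Write \(K_0=\min(K_0,N)+(K_0-N)_+\). The truncated part gives a continuous convolution. The remainder is controlled uniformly in \(t\) by the Orlicz–Hölder inequality: \((K_0-N)_+\) has small \(\exp L\) norm, which is not automatic. Alternatively we pair \(|q_e|\) against the tail using the \(L\log L\) modular, splitting \(|q_e|\le M\) and \(|q_e|>M\). Then \(L_e\) is a uniform limit of continuous functions, hence continuous. Thus it equals its \(W^{1,1}\) representative everywhere, and the sup bound transfers. Combining the sup bound with the variation bound gives the claimed estimate for \(\norm{\cM_\Omega}_{L^\infty}+\Var(\cM_\Omega)\).
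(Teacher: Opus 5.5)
Your proposal is correct and is essentially the paper's proof: split $q=q_e+q_o$, bound the logarithmic part pointwise by Lemma~\ref{lem:log-orlicz} and the sign part by $\norm{q_o}_{L^1}$, and obtain the $W^{1,1}$ and variation bounds from Lemma~\ref{lem:zygmund} together with Lemma~\ref{lem:profile-derivatives}. The continuity-by-truncation step you single out as the main obstacle is not needed. (Note also that $(K_0-N)_+$ does not have small $\exp L$ norm, since $\int e^{(K_0-N)_+/\mu}\,d\sigma=\infty$ for $\mu\le1$.) The everywhere pointwise bound already controls the essential supremum, and the continuous $W^{1,1}$ representative agrees with the integral almost everywhere, so it inherits the same bound at every point.
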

	
	\begin{proof}
		The estimate follows from the logarithmic integral bound and the
		Riesz--Zygmund theorem; related arguments appear in
		\cite{GrafakosStefanovSurvey1999,Stefanov2000}.  We include the short argument
		in the present normalization.  Put \(q(\varphi)=\Omega(\theta(\varphi))\) and
		decompose \(q=q_e+q_o\) with respect to
		the antipodal map.  Lemma \ref{lem:log-orlicz} controls the logarithmic part
		in \(L^\infty\), while the sign part is bounded by \(\norm{q_o}_{L^1}\).
		Lemma \ref{lem:zygmund} gives
		\[
		\norm{H_\T q_e}_{L^1(\T)}
		\le
		C\norm{q_e}_{L\log L(\T)}
		\le
		C\norm{\Omega}_{L\log L(\Sph)}.
		\]
		Lemma \ref{lem:profile-derivatives} then yields the derivative bound, and
		\(L\log L\hookrightarrow L^1\) controls the odd part.
	\end{proof}
	
	\begin{proof}[Proof of Theorem \ref{thm:LlogL-main}]
		The antipodal projections are bounded on the relevant Orlicz and Lebesgue
		spaces. Since \(\Sph\) has finite measure,
		\[
		\norm{\Omega_e}_{L\log L(\Sph)}
		+
		\norm{\Omega_o}_{L^1(\Sph)}
		\le
		C\norm{\Omega}_{L\log L(\Sph)}.
		\]
		The conclusion follows from Corollary~\ref{cor:parity-endpoint}.
	\end{proof}
	
	\subsection{Sharpness in the Orlicz scale}
	
	The sharpness example is built from two short arcs of the same size.  We place
	positive mass near a direction where the logarithmic part of the angular
	profile is large and the same amount of negative mass on a distant arc to
	enforce cancellation.  If the arcs have size \(e^{-N}\), the resulting profile
	grows like \(N\), whereas the \(L(\log L)^A\)-norm of the kernel grows only like
	\(N^A\).  A localized Fourier test transfers this difference to the operator
	norm and rules out every \(A<1\).
	
	Let \(0<\mu<1/100\), and let \(E,F\subset \Sph\) be disjoint with
	\(\sigma(E)=\sigma(F)=\mu\).  Define
	\[
	a_{E,F}=\mu^{-1}(\one_E-\one_F).
	\]
	
	\begin{lemma}\label{lem:orlicz-atom}
		Let \(A\ge0\), and let \(L(\log L)^A\) be defined by the Luxemburg norm
		associated with
		\[
		\Phi_A(t)=t\log^A(e+t),\qquad t\ge0.
		\]
		If \(N=\log(1/\mu)\), then
		\[
		\norm{a_{E,F}}_{L(\log L)^A(\Sph)}
		\simeq_A (1+N)^A.
		\]
	\end{lemma}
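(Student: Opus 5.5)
The plan is to compute the Luxemburg norm directly.
\[
\norm{a}_{L(\log L)^A}=\inf\Bigl\{\lambda>0:\int_{\Sph}\Phi_A(|a|/\lambda)\,d\sigma\le1\Bigr\}.
\]
Since \(|a_{E,F}|=\mu^{-1}\one_{E\cup F}\) and \(\sigma(E\cup F)=2\mu\), the modular reduces to a single scalar expression:
\[
I(\lambda):=\int_{\Sph}\Phi_A(|a_{E,F}|/\lambda)\,d\sigma
=2\mu\cdot\frac{1}{\lambda\mu}\log^A\Bigl(e+\frac{1}{\lambda\mu}\Bigr)
=\frac{2}{\lambda}\log^A\Bigl(e+\frac{1}{\lambda\mu}\Bigr).
\]
The function \(I\) is continuous and strictly decreasing in \(\lambda\), so the norm is the unique \(\lambda_*\) with \(I(\lambda_*)=1\). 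It remains to show \(\lambda_*\simeq_A(1+N)^A\), where \(1/\mu=e^N\) and \(N>\log 100\).

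\emph{Upper bound.} Take \(\lambda=C_A(1+N)^A\) with \(C_A\ge1\) large. Then \(1/(\lambda\mu)\le e^N\), so
\[
\log\bigl(e+1/(\lambda\mu)\bigr)\le\log(e+e^N)\le 1+N .
\]
Hence \(I(\lambda)\le 2C_A^{-1}(1+N)^{-A}(1+N)^A=2/C_A\le1\) once \(C_A\ge2\). This gives \(\lambda_*\le 2(1+N)^A\).

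\emph{Lower bound.} Take \(\lambda=c_A(1+N)^A\) with \(c_A\) small; we must show \(I(\lambda)>1\). When \(A=0\) this is immediate, since \(I(\lambda)\ge2/\lambda=2/c_A\). For \(A>0\) we need a lower bound on the logarithm:
\[
\frac{1}{\lambda\mu}=\frac{e^N}{c_A(1+N)^A}\ge e^{N/2}
\]
for \(N\ge N_0(A)\), because \((1+N)^A\le e^{N/2}/\,\cdot\) up to a constant depending on \(A\), and \(c_A\le1\). Consequently
\[
\log\bigl(e+1/(\lambda\mu)\bigr)\ge \max(1,N/2)\ge (1+N)/4
\]
for \(N\ge1\), and so \(I(\lambda)\ge 2c_A^{-1}4^{-A}>1\) if \(c_A<2\cdot4^{-A}\). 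Since \(N>\log100>4\), only the finitely bounded range \(\log100<N<N_0(A)\) remains. There both sides are comparable to constants depending on \(A\), so shrinking \(c_A\) handles it; alternatively, use \(\log(e+x)\ge1\) together with \((1+N)^A\le(1+N_0)^A\). Monotonicity of \(I\) then gives \(\lambda_*\ge c_A(1+N)^A\).

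The only delicate point is the lower bound. There one must check that rescaling \(\lambda\) by a power of \(N\) does not destroy the logarithm, i.e.\ that \(\log(1/(\lambda\mu))\) stays \(\gtrsim N\) because the \((1+N)^A\) loss is only polynomial in \(N\) against the exponential \(e^N\). Everything else is the explicit computation of \(I(\lambda)\) above.
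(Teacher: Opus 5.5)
Your proof is correct and follows the same route as the paper: you compute the modular exactly as $\frac{2}{\lambda}\log^A(e+e^N/\lambda)$ and test it at $\lambda=C_A(1+N)^A$ and at $\lambda=c_A(1+N)^A$. Your split into $N\ge N_0(A)$ and the bounded range $\log 100<N<N_0(A)$ simply makes explicit the step the paper summarizes as ``after possibly decreasing $c_A$ by a factor depending only on $A$.''
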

	
	\begin{proof}
		Since \(|a_{E,F}|=\mu^{-1}\) on \(E\cup F\) and
		\(\sigma(E\cup F)=2\mu\), for every \(\lambda>0\),
		\[
		\int_{\Sph}
		\Phi_A\left(\frac{|a_{E,F}|}{\lambda}\right)d\sigma
		=
		\frac2\lambda
		\log^A\left(e+\frac{\mu^{-1}}{\lambda}\right).
		\]
		Write \(\mu^{-1}=e^N\) and \(M_A=(1+N)^A\).  If
		\(\lambda=C_AM_A\) with \(C_A\) sufficiently large, then
		\[
		\log\left(e+\frac{e^N}{\lambda}\right)\le C(1+N),
		\]
		and hence
		\[
		\int_{\Sph}
		\Phi_A\left(\frac{|a_{E,F}|}{\lambda}\right)d\sigma
		\le
		\frac{C_A'}{C_A}\le1.
		\]
		Thus \(\norm{a_{E,F}}_{L(\log L)^A}\le C_AM_A\).
		
		Conversely, take \(\lambda=c_AM_A\), where \(c_A>0\) is sufficiently small.
		For all \(N\ge\log 100\),
		\[
		\log\left(e+\frac{e^N}{\lambda}\right)\ge c(1+N),
		\]
		after possibly decreasing \(c_A\) by a factor depending only on \(A\).
		Therefore
		\[
		\int_{\Sph}
		\Phi_A\left(\frac{|a_{E,F}|}{\lambda}\right)d\sigma
		\ge
		\frac{c_A'}{c_A}>1.
		\]
		By the definition of the Luxemburg norm,
		\(\norm{a_{E,F}}_{L(\log L)^A}\ge c_AM_A\).
	\end{proof}
	
	For all sufficiently large \(N\), choose
	\[
	E_N=\{\theta(\varphi):|\varphi-\pi/2|\le\pi e^{-N}\},
	\qquad
	F_N=\{\theta(\varphi):|\varphi|\le\pi e^{-N}\},
	\]
	and put
	\[
	\Omega_N=e^N(\one_{E_N}-\one_{F_N}).
	\]
	Then \(\Omega_N\) has mean zero and
	\[
	\norm{\Omega_N}_{L(\log L)^A}\simeq_A N^A.
	\]
	
	\begin{lemma}\label{lem:large-multiplier}
		There is a cone \(\Gamma_N\) of angular aperture comparable to \(e^{-N}\)
		around \(u_0=(1,0)\) such that
		\[
		\operatorname{Re}m_{\Omega_N}(\xi,\eta)\ge cN,
		\qquad
		(\xi,\eta)\in\Gamma_N,
		\]
		for all large \(N\).
	\end{lemma}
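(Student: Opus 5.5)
The plan is to read off the real part of the multiplier from the finite-part formula and to estimate the two arcs separately. Since \(\Omega_N\) is bounded and has mean zero, Lemma~\ref{lem:finite-part} applies. Writing \(u=\theta(t)=(\xi,\eta)/\abs{(\xi,\eta)}\), the sign term \(-\frac{i\pi}{2}\sgn(\theta\cdot u)\) is purely imaginary and \(\Omega_N\) is real. Hence
\[
\operatorname{Re}m_{\Omega_N}(\xi,\eta)
=
c\,e^N\left(\int_{E_N}K_0(\varphi-t)\,d\sigma(\varphi)
-\int_{F_N}K_0(\varphi-t)\,d\sigma(\varphi)\right),
\]
with \(c=2\pi\) and \(K_0(s)=-\log\abs{\cos s}\). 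I take
\[
\Gamma_N=\bigl\{(\xi,\eta)\neq0:\ (\xi,\eta)/\abs{(\xi,\eta)}=\theta(t),\ \abs t\le\pi e^{-N}\bigr\},
\]
which is a cone of aperture comparable to \(e^{-N}\) around \(u_0=(1,0)\). Note that \(\sigma(E_N)=\sigma(F_N)=e^{-N}\), since each arc has length \(2\pi e^{-N}\) and \(d\sigma=d\varphi/(2\pi)\).

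\emph{The arc \(E_N\).} For \(\varphi\in E_N\) and \(\abs t\le\pi e^{-N}\), set \(s=\varphi-\pi/2-t\). Then \(\abs s\le 2\pi e^{-N}\), and
\[
\abs{\cos(\varphi-t)}=\abs{\sin s}\le\abs s\le 2\pi e^{-N}.
\]
So \(K_0(\varphi-t)\ge N-\log(2\pi)\) pointwise on \(E_N\). Since \(K_0\ge0\) is real and we only need a lower bound, no integrability subtleties arise. Multiplying by \(e^N\sigma(E_N)=1\), the \(E_N\)-term is at least \(N-\log(2\pi)\).

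\emph{The arc \(F_N\).} For \(\varphi\in F_N\) we have \(\abs{\varphi-t}\le 2\pi e^{-N}\). Hence
\[
0\le K_0(\varphi-t)=-\log\cos(\varphi-t)\le C e^{-2N}
\]
for \(N\) large, using \(-\log\cos x\le x^2\) for small \(x\). The \(F_N\)-term is therefore \(O(e^{-2N})\).

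Combining the two pieces,
\[
\operatorname{Re}m_{\Omega_N}\ge c\bigl(N-\log(2\pi)-Ce^{-2N}\bigr)\ge \tfrac c2 N
\]
on \(\Gamma_N\) for all large \(N\), which is the claim.

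The only point needing care is the first step: the real part must be identified exactly from Lemma~\ref{lem:finite-part}. The \(\log\abs\zeta\) term cancels by the mean-zero condition, and the sign part contributes nothing to the real part. Once that is in place, the rest is the elementary bound \(\abs{\sin s}\le\abs s\) on the arc orthogonal to the cone, which produces the logarithmic size \(N\) uniformly across the cone.
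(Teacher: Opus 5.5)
Your proof is correct and follows the paper's argument. You identify \(\operatorname{Re}m_{\Omega_N}\) with \(2\pi\int\Omega_N(\theta)\,[-\log|\theta\cdot u|]\,d\sigma(\theta)\) via Lemma~\ref{lem:finite-part}, obtain \(-\log|\theta\cdot u|\ge N-O(1)\) on the orthogonal arc \(E_N\), and bound the \(F_N\) contribution from above. The only differences are cosmetic: you use the explicit cone \(|t|\le\pi e^{-N}\) in place of \(|u-u_0|\le c_0e^{-N}\), and you give the sharper \(O(e^{-2N})\) bound on the \(F_N\) term where the paper uses an absolute constant.
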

	
	\begin{proof}
		Fix \(c_0>0\) sufficiently small and set
		\[
		\Gamma_N
		=
		\left\{
		(\xi,\eta)\ne(0,0):
		\left|
		\frac{(\xi,\eta)}{(\xi^2+\eta^2)^{1/2}}-u_0
		\right|
		\le c_0e^{-N}
		\right\}.
		\]
		It suffices to bound
		\[
		A_N(u)=\int_{\Sph}\Omega_N(\theta)[-\log|\theta\cdot u|]\,d\sigma(\theta),
		\]
		because the real part of the profile is \(c_{\rm rad}A_N(u)\) with
		\(c_{\rm rad}>0\).  If \((\xi,\eta)\in\Gamma_N\) and
		\(u=(\xi,\eta)/(\xi^2+\eta^2)^{1/2}\), then \(|u-u_0|\le c_0e^{-N}\).  For
		\(\theta\in E_N\),
		\[
		|\theta\cdot u|\le C e^{-N},
		\qquad
		-\log|\theta\cdot u|\ge N-C.
		\]
		On \(F_N\), \(|\theta\cdot u|\ge1/4\) for all large \(N\), so the logarithmic
		term is bounded above by an absolute constant.  Since
		\(\sigma(E_N)=\sigma(F_N)=e^{-N}\) and the atom has height \(e^N\), we
		obtain
		\[
		A_N(u)
		\ge
		e^N e^{-N}(N-C)-Ce^N e^{-N}
		\ge cN
		\]
		for all sufficiently large \(N\).
	\end{proof}
	
	\begin{lemma}\label{lem:testing}
		Let \(1<p_1,p_2<\infty\), \(1<p<\infty\), and
		\(1/p=1/p_1+1/p_2\).  Let \(m\in L^\infty(\R^2)\) be measurable, and let
		\(T_m\) be the associated bilinear Fourier multiplier, initially defined on
		Schwartz inputs.  Let \(L>0\), and let
		\[
		I=[\xi_0-\delta,\xi_0+\delta],
		\qquad
		J=[\eta_0-\delta,\eta_0+\delta],
		\qquad
		\delta>0.
		\]
		Suppose, almost everywhere on \(I\times J\), that
		\[
		\operatorname{Re}m(\xi,\eta)\ge L.
		\]
		Then
		\[
		\norm{T_m}_{L^{p_1}\times L^{p_2}\to L^p}
		\ge
		c_{p_1,p_2}L,
		\]
		where \(c_{p_1,p_2}>0\) is independent of \(m,L,\xi_0,\eta_0\), and
		\(\delta\).
	\end{lemma}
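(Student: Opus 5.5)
We test the operator on modulated, dilated bumps adapted to the box \(I\times J\) and read off the lower bound from the value of the pairing. Fix once and for all \(\psi\in\mathcal S(\R)\) with \(\widehat\psi\ge0\), \(\supp\widehat\psi\subset[-1,1]\), and \(\widehat\psi=1\) on \([-1/2,1/2]\).

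\emph{Test functions.} Set
\[
\widehat f(\xi)=\widehat\psi\bigl((\xi-\xi_0)/\delta\bigr),\qquad
\widehat g(\eta)=\widehat\psi\bigl((\eta-\eta_0)/\delta\bigr),
\]
so \(\widehat f\otimes\widehat g\) is supported in \(I\times J\). Then \(f(x)=\delta e^{2\pi i\xi_0x}\psi(\delta x)\), and
\[
\norm f_{L^{p_1}}=\delta^{1-1/p_1}\norm\psi_{p_1},\qquad
\norm g_{L^{p_2}}=\delta^{1-1/p_2}\norm\psi_{p_2}.
\]
For the dual function, \(T_m(f,g)\) has Fourier support in \(I+J\), an interval of length \(4\delta\) centered at \(\xi_0+\eta_0\). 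Choose \(h\) with \(\widehat h(\zeta)=\widehat\psi\bigl((\zeta-\xi_0-\eta_0)/(4\delta)\bigr)\) (after rescaling \(\psi\) so that \(\widehat\psi=1\) on \([-1,1]\)), so that \(\widehat h=1\) on \(I+J\). Then \(\norm h_{L^{p'}}\simeq\delta^{1-1/p'}=\delta^{1/p}\).

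\emph{Pairing.} By Plancherel,
\[
\langle T_m(f,g),h\rangle=\iint m(\xi,\eta)\widehat f(\xi)\widehat g(\eta)\overline{\widehat h(\xi+\eta)}\,d\xi\,d\eta
=\iint_{I\times J} m\,\widehat f(\xi)\widehat g(\eta)\,d\xi\,d\eta .
\]
Since \(\widehat f\widehat g\ge0\) and \(\operatorname{Re}m\ge L\) a.e. on \(I\times J\), taking real parts gives
\[
\bigl|\langle T_m(f,g),h\rangle\bigr|\ge L\Bigl(\int\widehat f\Bigr)\Bigl(\int\widehat g\Bigr)=L\,\delta^2\norm{\widehat\psi}_1^2 .
\]

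\emph{Conclusion.} Hölder's inequality gives
\[
\bigl|\langle T_m(f,g),h\rangle\bigr|\le\norm{T_m}\,\norm f_{p_1}\norm g_{p_2}\norm h_{p'}\lesssim\norm{T_m}\,\delta^{2-1/p_1-1/p_2+1/p}=\norm{T_m}\,\delta^{2},
\]
since \(1/p=1/p_1+1/p_2\). The powers of \(\delta\) cancel, so \(\norm{T_m}\ge c_{p_1,p_2}L\) with a constant independent of \(\xi_0,\eta_0,\delta,m\). The modulations only contribute unimodular factors to the norms and are harmless.

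\emph{Main obstacle.} The only genuinely delicate point is justifying the Plancherel formula for the pairing. It is valid because \(m\in L^\infty\) and the inputs are Schwartz with compact Fourier support, so the multiplier integral converges absolutely. It also requires that the a.e. hypothesis on \(I\times J\) suffices; it does, since the integrand is supported in \(I\times J\).
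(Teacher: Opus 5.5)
Your proof is correct and is essentially the paper's argument: modulated, $\delta$-dilated bumps whose nonnegative Fourier transforms are supported in $I\times J$, a dual function whose Fourier transform equals $1$ on $I+J$, a real-part lower bound, and H\"older's inequality, with the powers of $\delta$ cancelling because $1/p=1/p_1+1/p_2$. The only difference is bookkeeping. The paper normalizes $f,g,h$ in $L^{p_1},L^{p_2},L^{p'}$ and tracks the powers of $\delta$ inside the frequency integral, whereas you normalize the Fourier transforms and track the powers of $\delta$ in the norms. Your parenthetical rescaling is unnecessary: with $\widehat\psi=1$ on $[-1/2,1/2]$, the scale $4\delta$ already gives $\widehat h=1$ on $I+J$.
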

	
	\begin{proof}
		If the operator norm is infinite, there is nothing to prove.  Otherwise, the
		associated trilinear form is bounded on Schwartz inputs by duality.  Choose
		Schwartz functions \(\phi,\psi,\chi\) such that
		\(\widehat\phi,\widehat\psi\ge0\), neither is identically zero,
		\[
		\operatorname{supp}\widehat\phi,
		\operatorname{supp}\widehat\psi
		\subset[-1/4,1/4],
		\qquad
		\widehat\chi=1
		\quad\text{on }[-1/2,1/2].
		\]
		We may in addition take \(\widehat\chi\) to be real-valued on
		\([-1/2,1/2]\).
		Set
		\[
		f(x)=\delta^{1/p_1}e^{2\pi i\xi_0x}\phi(\delta x),
		\qquad
		g(x)=\delta^{1/p_2}e^{2\pi i\eta_0x}\psi(\delta x),
		\]
		and
		\[
		h(x)=\delta^{1/p'}e^{2\pi i(\xi_0+\eta_0)x}\chi(\delta x),
		\]
		where \(p'\) is conjugate to \(p\).  Then
		\[
		\norm f_{p_1}=\norm\phi_{p_1},
		\qquad
		\norm g_{p_2}=\norm\psi_{p_2},
		\qquad
		\norm h_{p'}=\norm\chi_{p'}.
		\]
		Moreover,
		\[
		\widehat f(\xi)
		=
		\delta^{1/p_1-1}
		\widehat\phi\left(\frac{\xi-\xi_0}{\delta}\right),
		\qquad
		\widehat g(\eta)
		=
		\delta^{1/p_2-1}
		\widehat\psi\left(\frac{\eta-\eta_0}{\delta}\right),
		\]
		and
		\[
		\overline{\widehat h(\xi+\eta)}
		=
		\delta^{1/p'-1}
		\overline{\widehat\chi
			\left(\frac{\xi+\eta-\xi_0-\eta_0}{\delta}\right)} .
		\]
		On the support of \(\widehat f(\xi)\widehat g(\eta)\), we have
		\((\xi,\eta)\in I\times J\) and
		\[
		\frac{\xi+\eta-\xi_0-\eta_0}{\delta}\in[-1/2,1/2],
		\]
		so \(\overline{\widehat\chi}=1\) there.  The trilinear form satisfies
		\[
		\Lambda_m(f,g,h)
		=
		\iint m(\xi,\eta)
		\widehat f(\xi)\widehat g(\eta)
		\overline{\widehat h(\xi+\eta)}\,d\xi d\eta .
		\]
		After making the change of variables
		\[
		\xi=\xi_0+\delta a,
		\qquad
		\eta=\eta_0+\delta b,
		\]
		the power of \(\delta\) is
		\[
		\delta^{1/p_1-1}
		\delta^{1/p_2-1}
		\delta^{1/p'-1}
		\delta^2
		=
		\delta^{1/p+1/p'-1}
		=
		1.
		\]
		Therefore
		\[
		\begin{aligned}
			\operatorname{Re}\Lambda_m(f,g,h)
			&\ge
			L
			\iint
			\widehat\phi(a)\widehat\psi(b)\widehat\chi(a+b)\,da\,db  \\
			&=
			L\,C_{\phi,\psi,\chi},
		\end{aligned}
		\]
		where
		\[
		C_{\phi,\psi,\chi}
		=
		\iint
		\widehat\phi(a)\widehat\psi(b)\widehat\chi(a+b)\,da\,db
		>0.
		\]
		On the other hand,
		\[
		|\Lambda_m(f,g,h)|
		\le
		\norm{T_m}_{L^{p_1}\times L^{p_2}\to L^p}
		\norm f_{p_1}\norm g_{p_2}\norm h_{p'}.
		\]
		Combining the two estimates yields the desired lower bound, with
		\[
		c_{p_1,p_2}
		=
		\frac{C_{\phi,\psi,\chi}}
		{\norm\phi_{p_1}\norm\psi_{p_2}\norm\chi_{p'}}.
		\qedhere
		\]
	\end{proof}
	
	\begin{proposition}\label{prop:operator-lower}
		Assume
		\[
		1<p_1,p_2<\infty,\qquad
		1<p<\infty,\qquad
		\frac1p=\frac1{p_1}+\frac1{p_2}.
		\]
		For the angular atoms
		\[
		\Omega_N=e^N(\one_{E_N}-\one_{F_N})
		\]
		defined above, there exist constants \(c=c(p_1,p_2)>0\) and \(N_0\) such
		that, for all \(N\ge N_0\),
		\[
		\norm{T_{\Omega_N}}_{L^{p_1}\times L^{p_2}\to L^p}
		\ge cN .
		\]
	\end{proposition}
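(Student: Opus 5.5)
The plan is to combine Lemma~\ref{lem:large-multiplier} with the testing Lemma~\ref{lem:testing}. Since \(\Omega_N\) is bounded and has mean zero, Lemma~\ref{lem:finite-part} identifies \(T_{\Omega_N}\) with the bilinear Fourier multiplier of symbol \(m_{\Omega_N}(\xi,\eta)=\cM_{\Omega_N}((\xi,\eta)/|(\xi,\eta)|)\). This symbol is bounded, because \(\cM_{\Omega_N}\) is bounded for bounded \(\Omega_N\): \(K_0\in L^1\) uniformly under translation. Hence Lemma~\ref{lem:testing} applies with \(m=m_{\Omega_N}\).

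Lemma~\ref{lem:large-multiplier} gives \(\operatorname{Re} m_{\Omega_N}\ge cN\) on the cone \(\Gamma_N\) of aperture \(c_0e^{-N}\) around \((1,0)\). I will place a square \(I\times J\) inside this cone. Take \(\xi_0=1\), \(\eta_0=0\) and \(\delta=c_1e^{-N}\) with \(c_1\) small compared with \(c_0\). For \((\xi,\eta)\in[1-\delta,1+\delta]\times[-\delta,\delta]\), the unit vector \(u=(\xi,\eta)/|(\xi,\eta)|\) satisfies \(|u-(1,0)|\le C|\eta|/|\xi|\le 2C\delta\le c_0e^{-N}\). So the whole square lies in \(\Gamma_N\), and \(\operatorname{Re} m\ge cN\) holds there.

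Lemma~\ref{lem:testing} now yields \(\norm{T_{\Omega_N}}\ge c_{p_1,p_2}\,cN\), with a constant independent of \(\delta\), and therefore of \(N\). This holds for \(N\ge N_0\), where \(N_0\) is the threshold from Lemma~\ref{lem:large-multiplier}.

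The only delicate point is consistency of the identification. The operator \(T_{\Omega_N}\), initially the principal-value operator on Schwartz functions, must coincide with the multiplier \(T_m\) used in the testing lemma. Lemma~\ref{lem:finite-part} together with the pairing identity \eqref{eq:finite-part-pv-pairing} gives exactly this: \(\langle T_{\Omega_N}(f,g),h\rangle=\Lambda_m(f,g,h)\) on Schwartz inputs. The testing functions in Lemma~\ref{lem:testing} are Schwartz, so the lower bound transfers directly to the operator norm. I do not expect a real obstacle beyond checking that the cone geometry is uniform in \(N\). This uniformity is what makes the scaling invariance of Lemma~\ref{lem:testing} in \(\delta\) essential.
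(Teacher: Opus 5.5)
Your proposal is correct and follows the paper's proof: you inscribe a square of side comparable to $e^{-N}$, centered at $(1,0)$, in the cone $\Gamma_N$ from Lemma~\ref{lem:large-multiplier}, and then apply the scale-invariant testing Lemma~\ref{lem:testing}. Your explicit identification of the principal-value operator with the multiplier, via Lemma~\ref{lem:finite-part} and \eqref{eq:finite-part-pv-pairing}, is a detail the paper leaves implicit.
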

	
	\begin{proof}
		Let \(c_0>0\) be the constant in the definition of \(\Gamma_N\) from Lemma
		\ref{lem:large-multiplier}, and choose \(0<c_{\rm rec}<c_0/4\).  For all
		sufficiently large \(N\), the rectangle
		\[
		\xi\in[1-c_{\rm rec}e^{-N},1+c_{\rm rec}e^{-N}],
		\qquad
		\eta\in[-c_{\rm rec}e^{-N},c_{\rm rec}e^{-N}]
		\]
		is contained in \(\Gamma_N\), because the distance between the normalized
		direction and \(u_0\) is at most \(3c_{\rm rec}e^{-N}<c_0e^{-N}\).  On this
		rectangle,
		\[
		\operatorname{Re}m_{\Omega_N}(\xi,\eta)\ge c_1N .
		\]
		Lemma \ref{lem:testing}, with \(L=c_1N\), gives the asserted lower bound.
	\end{proof}
	
	\begin{proof}[Proof of Theorem \ref{thm:Orlicz-sharp}]
		Theorem \ref{thm:LlogL-main} gives \(A_*(p_1,p_2)\le1\).  Conversely, if an
		\(L(\log L)^A\) estimate held uniformly for all mean-zero
		\(\Omega\in L(\log L)^A(\Sph)\), then Proposition
		\ref{prop:operator-lower} and Lemma \ref{lem:orlicz-atom} would give
		\[
		N
		\lesssim
		\norm{T_{\Omega_N}}
		\lesssim_A
		\norm{\Omega_N}_{L(\log L)^A}
		\lesssim_A N^A .
		\]
		Letting \(N\to\infty\) rules out every \(A<1\).  Hence
		\(A_*(p_1,p_2)\ge1\), and the two inequalities give
		\(A_*(p_1,p_2)=1\).
	\end{proof}
	
	\vspace{0.3cm}

\section{Boundedness under the \(\cK_{1/2,\beta}\) kernel condition}
	
	\subsection{Dyadic decomposition and preliminary estimates}
	
	Let \(\chi\in C_c^\infty((1/2,2))\) satisfy
	\[
	\sum_{i\in\mathbb Z}\chi(2^{-i}r)=1,\qquad r>0.
	\]
	Set \(\chi_i(x)=\chi(2^{-i}|x|)\), and let \(K\) denote the homogeneous kernel
	in \eqref{eq:T-def}:
	\[
	K(y_1,y_2)=\frac{\Omega((y_1,y_2)/\abs{(y_1,y_2)})}{\abs{(y_1,y_2)}^2},
	\]
	and define
	\[
	K^i:=\chi_iK,
	\qquad
	K^0:=\chi_0K.
	\]
	Choose also a smooth Littlewood--Paley cutoff
	\(\eta\in C_c^\infty((1/2,2))\) such that
	\[
	\sum_{j\in\mathbb Z}\eta(2^{-j}r)=1
	\qquad (r>0),
	\]
	and let \(\Delta_j\) denote the Fourier multiplier with symbol
	\(\eta(2^{-j}\abs{\xi})\).  We then set
	\[
	K_j^i:=\Delta_{j-i}K^i,
	\qquad
	K_j:=\sum_{i\in\mathbb Z}K_j^i,
	\qquad
	T_j:=T_{K_j}.
	\]
	The decomposition is chosen so that
	\[
	T_\Omega=\sum_{j\in\mathbb Z}T_j
	\]
	in \(\mathcal S'\). Since the identity involves two infinite indices and
	the original operator is defined by radial principal value, its precise
	reconstruction and cutoff independence are recorded in
	Lemma~\ref{lem:dyadic-reconstruction} below.
	
	The homogeneity of \(K\) implies
	\[
	K^i(x)=2^{-2i}K^0(2^{-i}x).
	\]
	Consequently, \(\widehat{K_j}\) is a sum of dyadic dilates of the prototype multiplier \(\widehat{K_j^0}\).
	
	The defining directional norm controls the \(L^1\) norm of \(\Omega\).
	
	\begin{lemma}\label{lem:L1-from-K}
		For every \(\beta>0\), there exists \(C_\beta>0\) such that
		\[
		\norm{\Omega}_{L^1(\Sph)}\le C_\beta\norm{\Omega}_{\cK_{1/2,\beta}}.
		\]
	\end{lemma}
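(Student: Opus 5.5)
The idea is to choose finitely many directions \(\xi\in\Sph\) so that, for every \(\theta\), at least one of them has a weight bounded below by a positive constant, and then sum the corresponding integrals. Set
\[
w_\xi(\theta)=|\theta\cdot\xi|^{-1/2}\bigl(\log(1/|\theta\cdot\xi|)\bigr)^\beta .
\]
Since \(|\theta\cdot\xi|\le1\), \(w_\xi\ge0\) everywhere. The difficulty is that \(w_\xi(\theta)\) vanishes when \(|\theta\cdot\xi|=1\), because the logarithm is zero there. So a single direction is not enough, and the choice of directions must avoid both \(|\theta\cdot\xi|\) near \(0\) and near \(1\).

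Concretely, take the three directions \(\xi_k=\theta(k\pi/3)\), \(k=0,1,2\). For any \(\theta=\theta(\varphi)\), the angles \(\varphi-k\pi/3\) modulo \(\pi\) are spaced by \(\pi/3\). Hence at least one of them lies in \([\pi/6,\pi/3]\) modulo \(\pi\), up to sign. For that \(k\),
\[
|\cos(\varphi-k\pi/3)|\in[1/2,\sqrt3/2].
\]
On the interval \(s\in[1/2,\sqrt3/2]\), the function \(s^{-1/2}(\log(1/s))^\beta\) is continuous and strictly positive. It is therefore bounded below by a constant \(c_\beta>0\), for instance \(c_\beta=(\log(2/\sqrt3))^\beta\).

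This yields the pointwise bound
\[
\sum_{k=0}^2 w_{\xi_k}(\theta)\ge c_\beta\qquad\text{for all }\theta\in\Sph .
\]
Multiplying by \(|\Omega(\theta)|\) and integrating gives
\[
c_\beta\norm{\Omega}_{L^1(\Sph)}\le\sum_{k=0}^2\int_{\Sph}|\Omega|\,w_{\xi_k}\,d\sigma\le3\norm{\Omega}_{\cK_{1/2,\beta}} .
\]
This proves the lemma with \(C_\beta=3/c_\beta\).

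The only delicate point is the covering step: verifying that the three rotated directions always leave one value of \(|\theta\cdot\xi_k|\) in a compact subinterval of \((0,1)\). This follows from the pigeonhole observation on angles modulo \(\pi\) above. No mean-zero assumption is needed.
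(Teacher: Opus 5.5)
Your argument is correct, but it takes a different route from the paper. The paper does not pick finitely many directions; it averages the weight over all \(\xi\). By rotation invariance, \(c_\beta=\int_{\Sph}\abs{\theta\cdot\xi}^{-1/2}(\log(1/\abs{\theta\cdot\xi}))^\beta\,d\sigma(\xi)\) is independent of \(\theta\) and lies in \((0,\infty)\). Tonelli's theorem then gives \(c_\beta\norm{\Omega}_{L^1(\Sph)}=\int_{\Sph}\int_{\Sph}\abs{\Omega(\theta)}w_\xi(\theta)\,d\sigma(\theta)\,d\sigma(\xi)\le\norm{\Omega}_{\cK_{1/2,\beta}}\).

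That route needs a Fubini interchange and the integrability of \(u^{-1/2}(\log(1/u))^\beta\) at \(u=0\). In exchange, it shows that the average over \(\xi\), not just the supremum, controls \(\norm{\Omega}_{L^1}\). It also exploits the large values of the weight near the zeros of \(\theta\cdot\xi\), so its constant \(1/c_\beta\) tends to zero as \(\beta\to\infty\).

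Your pigeonhole covering is more elementary and shows that three fixed directional integrals already suffice. The cost is the constant \(3(\log(2/\sqrt3))^{-\beta}\), which grows exponentially in \(\beta\); this is immaterial here because \(\beta\) is fixed.

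A small simplification: you do not need to keep \(\abs{\theta\cdot\xi_k}\) away from \(0\), because \(s\mapsto s^{-1/2}(\log(1/s))^\beta\) is decreasing on \((0,1]\). It is enough that some \(\abs{\theta\cdot\xi_k}\le s_0<1\). So the two directions \((1,0)\) and \((0,1)\) already work, since \(\min\{\abs{\cos\varphi},\abs{\sin\varphi}\}\le2^{-1/2}\).
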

	
	\begin{proof}
		For \(\theta\in\Sph\), set
		\[
		c_\beta(\theta):=
		\int_{\Sph}
		\frac{1}{\abs{\theta\cdot\xi}^{1/2}}
		\left(\log\frac1{\abs{\theta\cdot\xi}}\right)^\beta
		\,d\sigma(\xi).
		\]
		By rotational invariance, \(c_\beta(\theta)\) is independent of \(\theta\);
		denote its common value by \(c_\beta\).  Moreover, \(0<c_\beta<\infty\): near
		either zero of \(\xi\mapsto\theta\cdot\xi\), the integrand is comparable to
		\[
		u^{-1/2}\left(\log\frac1u\right)^\beta,
		\]
		which is integrable near \(u=0\).  Tonelli's theorem and
		\(\sigma(\Sph)=1\) therefore give
		\begin{align*}
			c_\beta\norm{\Omega}_{L^1(\Sph)}
			&=
			\int_{\Sph}\int_{\Sph}
			\frac{\abs{\Omega(\theta)}}{\abs{\theta\cdot\xi}^{1/2}}
			\left(\log\frac1{\abs{\theta\cdot\xi}}\right)^\beta
			\,d\sigma(\theta)d\sigma(\xi)\\
			&\le
			\norm{\Omega}_{\cK_{1/2,\beta}}.
		\end{align*}
		Dividing by \(c_\beta\) proves the assertion.
	\end{proof}
	
	\subsection{Endpoint Fourier decay}
	
	The logarithmic gain enters through the following lemma.
	
	\begin{lemma}\label{lem:prototype-fourier}
		Let \(\Omega\in\cK_{1/2,\beta}\) have mean zero. Then the Fourier
		transform of \(K^0\) satisfies the following estimates.
		\begin{enumerate}
			\item for \(\abs{\xi}\le 1\),
			\begin{equation}\label{eq:K0-low}
				\abs{\widehat{K^0}(\xi)}\lesssim \norm{\Omega}_{\cK_{1/2,\beta}}\,\abs{\xi};
			\end{equation}
			\item for \(\abs{\xi}\ge 4\),
			\begin{equation}\label{eq:K0-high}
				\abs{\widehat{K^0}(\xi)}\lesssim \norm{\Omega}_{\cK_{1/2,\beta}}\,\abs{\xi}^{-1/2}(\log\abs{\xi})^{-\beta}.
			\end{equation}
		\end{enumerate}
		Moreover, for every multi-index \(\alpha\neq 0\) and \(\abs{\xi}\ge 4\) one has
		\begin{equation}\label{eq:K0-deriv-high}
			\abs{\p^\alpha\widehat{K^0}(\xi)}
			\lesssim
			\norm{\Omega}_{\cK_{1/2,\beta}}\,\abs{\xi}^{-1/2}(\log\abs{\xi})^{-\beta}.
		\end{equation}
	\end{lemma}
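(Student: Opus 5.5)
We write the Fourier transform in polar coordinates. Since \(K^0(y)=\chi(|y|)\Omega(y/|y|)|y|^{-2}\) and \(dy=2\pi r\,dr\,d\sigma(\theta)\),
\[
\widehat{K^0}(\xi)=2\pi\int_{\Sph}\Omega(\theta)\,\Phi(\theta\cdot\xi)\,d\sigma(\theta),
\qquad
\Phi(s):=\int_{1/2}^{2}\chi(r)e^{-2\pi i r s}\,\frac{dr}{r}.
\]
Here \(\Phi\) is the one-dimensional Fourier transform of the compactly supported smooth function \(\chi(r)/r\); it is therefore a Schwartz function of \(s\). All three estimates will come from bounds on \(\Phi\) and its derivatives, integrated against \(|\Omega|\) and then compared with the \(\cK_{1/2,\beta}\) weight.

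\emph{Low frequencies.} The mean-zero condition lets us replace \(\Phi(\theta\cdot\xi)\) by \(\Phi(\theta\cdot\xi)-\Phi(0)\). Since \(|\Phi(s)-\Phi(0)|\le \norm{\Phi'}_\infty|s|\le C|\xi|\), we get \(|\widehat{K^0}(\xi)|\lesssim|\xi|\norm{\Omega}_{L^1}\). Lemma~\ref{lem:L1-from-K} then gives \eqref{eq:K0-low}.

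\emph{High frequencies.} Write \(\xi=|\xi|u\) with \(u\in\Sph\), and use \(|\Phi(s)|\le C_M(1+|s|)^{-M}\) with \(M\) large. This gives
\[
|\widehat{K^0}(\xi)|\lesssim\int_{\Sph}|\Omega(\theta)|\,(1+|\xi||\theta\cdot u|)^{-M}\,d\sigma(\theta).
\]
The aim is to dominate the function \(w(t):=(1+|\xi|t)^{-M}\), for \(t=|\theta\cdot u|\in[0,1]\), by
\[
C|\xi|^{-1/2}(\log|\xi|)^{-\beta}\,t^{-1/2}(\log(1/t))^\beta .
\]
Taking the supremum over \(u\) in the definition of \(\norm{\Omega}_{\cK_{1/2,\beta}}\) then yields \eqref{eq:K0-high}. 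We verify the pointwise domination in three ranges of \(t\).

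\begin{itemize}
\item If \(t\le|\xi|^{-1}\), then \(t^{-1/2}\ge|\xi|^{1/2}\) and \(\log(1/t)\ge\log|\xi|\), so the right side is at least \(C\ge w\).
\item If \(|\xi|^{-1}\le t\le|\xi|^{-1/2}\), then \(\log(1/t)\ge\tfrac12\log|\xi|\), and \(w(t)\le(|\xi|t)^{-1/2}\) since \(M\ge 1/2\). This is the required bound.
\item If \(t\ge|\xi|^{-1/2}\), then \(w(t)\le(|\xi|t)^{-M}\le|\xi|^{-M/2}\). This is far smaller than \(|\xi|^{-1/2}(\log|\xi|)^{-\beta}\), provided the weight \(t^{-1/2}(\log(1/t))^\beta\) is bounded below.
\end{itemize}

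This last range is the main obstacle. The logarithm vanishes at \(t=1\), so the weight degenerates near \(|\theta\cdot u|=1\). We handle it in one of two ways.

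First option: use the natural convention that the logarithm is \(\log(e/|\theta\cdot\xi|)\) (or \(1+\log\)), which changes nothing near \(t=0\).

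Second option (if we keep the definition literally): split the circle. On the region \(|\theta\cdot u|\ge 1/2\), bound directly by \(|\xi|^{-M/2}\norm{\Omega}_{L^1}\) and use Lemma~\ref{lem:L1-from-K}. The weight is bounded below on \(t\le1/2\), so the three-range argument above applies there with the weight bounded below by a positive constant.

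\emph{Derivatives.} For \eqref{eq:K0-deriv-high}, differentiation under the integral gives
\[
\p^\alpha\widehat{K^0}(\xi)=2\pi\int_{\Sph}\Omega(\theta)\,\theta^\alpha\,\Phi^{(|\alpha|)}(\theta\cdot\xi)\,d\sigma(\theta).
\]
Each \(\Phi^{(k)}\) is again Schwartz, being the transform of \((-2\pi i r)^k\chi(r)/r\), and \(|\theta^\alpha|\le1\). The same three-range decay argument therefore applies verbatim, with the constant depending on \(\alpha\).
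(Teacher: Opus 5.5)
Your proof is correct and follows essentially the paper's argument. You use a polar reduction to the radial oscillatory factor and, at low frequency, the mean-zero condition plus a Lipschitz bound. At high frequency you dominate the radial decay pointwise by the weight \(t^{-1/2}(\log(1/t))^\beta\) near the directions orthogonal to \(\xi\), and fall back on the \(L^1\) bound of Lemma~\ref{lem:L1-from-K} where \(|\theta\cdot u|\) is bounded below; this is your second option, and the paper does the same with a cut at \(e^{-1}\) instead of \(1/2\).

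The differences are cosmetic. You use the full Schwartz decay of \(\Phi\) and a three-range check, whereas the paper uses only \(|I(t)|\lesssim\min\{1,|t|^{-1}\}\) and a monotonicity computation. Only your second option proves the lemma as literally stated; the first option changes the definition of \(\cK_{1/2,\beta}\).
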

	
	The implicit constants may depend on \(\beta\) and the fixed cutoff
	\(\chi\); in the derivative estimate, they may also depend on \(\alpha\).
	They are independent of \(\xi\) and \(\Omega\).
	
	\begin{remark}
		The constants \(1\) and \(4\) merely separate the low- and high-frequency
		regimes.
		On the intermediate region \(1<|\xi|<4\), the trivial estimate
		\[
		|\widehat{K^0}(\xi)|\le \|K^0\|_{L^1(\mathbb R^2)}
		\lesssim \|\Omega\|_{\cK_{1/2,\beta}}.
		\]
		Thus this compact region can be absorbed into the implicit constant.
	\end{remark}
	
	\begin{proof}
		\emph{Setup.}
		Writing \(x=r\theta\), with \(r>0\) and \(\theta\in\Sph\), and recalling that
		\(d\sigma=d\varphi/(2\pi)\), so that
		\[
		dx=r\,dr\,d\varphi=2\pi r\,dr\,d\sigma(\theta),
		\]
		we obtain
		\begin{equation}\label{eq:polar-K0}
			\widehat{K^0}(\xi)
			=
			2\pi\int_{\Sph}\Omega(\theta)
			\Bigl(\int_{1/2}^{2}\chi(r)e^{-2\pi i r\,\xi\cdot\theta}\frac{dr}{r}\Bigr)
			d\sigma(\theta).
		\end{equation}
		Set
		\[
		I(t):=2\pi\int_{1/2}^{2}\chi(r)e^{-2\pi i rt}\frac{dr}{r}.
		\]
		Since \(r\mapsto \chi(r)/r\) is a smooth compactly supported function on \((1/2,2)\), one integration by parts gives
		\begin{equation}\label{eq:I-basic}
			\abs{I(t)}\lesssim \min\{1,\abs{t}^{-1}\}
			\qquad (t\in\R).
		\end{equation}
		
		\smallskip
		\noindent\emph{Low frequencies.}
		We first prove \eqref{eq:K0-low}. Differentiating under the integral sign gives
		\[
		I'(t)=-4\pi^2 i\int_{1/2}^{2}\chi(r)e^{-2\pi i rt}\,dr,
		\]
		so \(I'\) is bounded and \(I\) is Lipschitz. In particular,
		\begin{equation}\label{eq:I-lipschitz}
			\abs{I(t)-I(0)}\le \norm{I'}_{L^\infty(\R)}\abs{t}\lesssim \abs{t}
			\qquad (t\in\R).
		\end{equation}
		Since \(\Omega\) has mean zero, \eqref{eq:polar-K0} gives
		\[
		\widehat{K^0}(\xi)=\int_{\Sph}\Omega(\theta)\bigl(I(\xi\cdot\theta)-I(0)\bigr)\,d\sigma(\theta).
		\]
		Combining \eqref{eq:I-lipschitz} with Lemma~\ref{lem:L1-from-K} gives, for
		\(\abs{\xi}\le1\),
		\begin{align*}
			\abs{\widehat{K^0}(\xi)}
			&\le
			\int_{\Sph}\abs{\Omega(\theta)}\abs{I(\xi\cdot\theta)-I(0)}\,d\sigma(\theta)
			\\
			&\lesssim
			\abs{\xi}\int_{\Sph}\abs{\Omega(\theta)}\,d\sigma(\theta)
			\lesssim
			\abs{\xi}\,\norm{\Omega}_{\cK_{1/2,\beta}}.
		\end{align*}
		This proves \eqref{eq:K0-low}.
		
		\smallskip
		\noindent\emph{High frequencies.}
		We now prove \eqref{eq:K0-high}. Let \(\rho:=\abs{\xi}\ge4\) and write \(\omega:=\xi/\abs{\xi}\in\Sph\). For \(\theta\in\Sph\) set
		\[
		u:=\abs{\theta\cdot\omega}\in[0,1].
		\]
		By \eqref{eq:I-basic},
		\[
		\abs{I(\rho u)}\lesssim \min\Bigl\{1,\frac1{\rho u}\Bigr\}.
		\]
		We therefore analyze the scalar function
		\begin{equation}\label{eq:def-f}
			f(u):=u^{1/2}\Bigl(\log\frac1u\Bigr)^{-\beta}
			\min\Bigl(1,\frac1{\rho u}\Bigr),
			\qquad 0<u\le e^{-1}.
		\end{equation}
		We claim that
		\begin{equation}\label{eq:f-max}
			\sup_{0<u\le e^{-1}}f(u)
			\lesssim_\beta
			\rho^{-1/2}(\log\rho)^{-\beta}.
		\end{equation}
		We split the analysis at \(u=\rho^{-1}\), where the two terms in
		\(\min\{1,(\rho u)^{-1}\}\) change.
		
		On \(0<u\le\rho^{-1}\) we have
		\[
		f(u)=f_1(u):=u^{1/2}\Bigl(\log\frac1u\Bigr)^{-\beta}.
		\]
		Since \(\rho\ge4>e\), the interval \((0,\rho^{-1}]\) is contained in \((0,e^{-1}]\). A direct differentiation yields
		\[
		f_1'(u)
		=
		\frac{u^{-1/2}}{\bigl(\log(1/u)\bigr)^{\beta+1}}
		\Bigl(\frac12\log\frac1u+\beta\Bigr)>0,
		\]
		so \(f_1\) is increasing. Hence
		\begin{equation}\label{eq:f1-max}
			\sup_{0<u\le\rho^{-1}}f(u)=f_1(\rho^{-1})=\rho^{-1/2}(\log\rho)^{-\beta}.
		\end{equation}
		
		On the complementary interval \(\rho^{-1}\le u\le e^{-1}\) we have
		\[
		f(u)=f_2(u):=\rho^{-1}u^{-1/2}\Bigl(\log\frac1u\Bigr)^{-\beta}.
		\]
		Differentiating again gives
		\[
		f_2'(u)
		=
		\rho^{-1}
		\frac{u^{-3/2}}{\bigl(\log(1/u)\bigr)^{\beta+1}}
		\Bigl(\beta-\frac12\log\frac1u\Bigr).
		\]
		Thus the only possible critical point is at \(u=e^{-2\beta}\).  The derivative
		is negative to the left of this point and positive to its right, so this
		critical point, when it belongs to the interval, is a minimum.  Consequently,
		the maximum of \(f_2\) on \([\rho^{-1},e^{-1}]\) is attained at an endpoint.
		The endpoint values are
		\[
		f_2(\rho^{-1})=\rho^{-1/2}(\log\rho)^{-\beta},
		\qquad
		f_2(e^{-1})=e^{1/2}\rho^{-1},
		\]
		and
		\[
		\rho^{-1}
		\le C_\beta\rho^{-1/2}(\log\rho)^{-\beta},
		\qquad \rho\ge4.
		\]
		Thus both endpoint values are bounded by a constant multiple of
		\(\rho^{-1/2}(\log\rho)^{-\beta}\). Hence
		\begin{equation}\label{eq:f2-max}
			\sup_{\rho^{-1}\le u\le e^{-1}}f_2(u)
			\lesssim_\beta
			\rho^{-1/2}(\log\rho)^{-\beta}.
		\end{equation}
		Combining \eqref{eq:f1-max} and \eqref{eq:f2-max} proves \eqref{eq:f-max}.
		
		We now return to \eqref{eq:polar-K0}. The decomposition of the sphere into
		\[
		E_1:=\{\theta\in\Sph: \abs{\theta\cdot\omega}\le e^{-1}\},
		\qquad
		E_2:=\{\theta\in\Sph: \abs{\theta\cdot\omega}> e^{-1}\},
		\]
		separates the part where the logarithmic weight in \(\cK_{1/2,\beta}\) can be
		used from the part where we instead use the oscillatory decay
		\(\abs{I(\rho u)}\lesssim(\rho u)^{-1}\).
		
		On \(E_1\), writing \(u=\abs{\theta\cdot\omega}\) and using \eqref{eq:f-max}, we have
		\[
		\min\Bigl\{1,\frac1{\rho u}\Bigr\}
		\lesssim
		\rho^{-1/2}(\log\rho)^{-\beta}
		u^{-1/2}\Bigl(\log\frac1u\Bigr)^\beta.
		\]
		Hence
		\begin{align*}
			\int_{E_1}\abs{\Omega(\theta)}\abs{I(\rho u)}\,d\sigma(\theta)
			&\lesssim
			\rho^{-1/2}(\log\rho)^{-\beta}
			\int_{\Sph}
			\frac{\abs{\Omega(\theta)}}{\abs{\theta\cdot\omega}^{1/2}}
			\Bigl(\log\frac1{\abs{\theta\cdot\omega}}\Bigr)^\beta\,d\sigma(\theta)
			\\
			&\le
			\rho^{-1/2}(\log\rho)^{-\beta}\norm{\Omega}_{\cK_{1/2,\beta}}.
		\end{align*}
		
		On \(E_2\) we have \(u>e^{-1}\), so \eqref{eq:I-basic} yields
		\[
		\abs{I(\rho u)}\lesssim (\rho u)^{-1}\lesssim \rho^{-1}.
		\]
		Therefore, by Lemma~\ref{lem:L1-from-K},
		\begin{align*}
			\int_{E_2}\abs{\Omega(\theta)}\abs{I(\rho u)}\,d\sigma(\theta)
			&\lesssim
			\rho^{-1}\norm{\Omega}_{L^1(\Sph)}
			\\
			&\lesssim
			\rho^{-1}\norm{\Omega}_{\cK_{1/2,\beta}}
			\lesssim_\beta
			\rho^{-1/2}(\log\rho)^{-\beta}\norm{\Omega}_{\cK_{1/2,\beta}}.
		\end{align*}
		Combining the estimates on \(E_1\) and \(E_2\) proves \eqref{eq:K0-high}.
		
		\smallskip
		\noindent\emph{Derivative bounds.}
		Let \(\alpha\neq 0\). Since \(K^0\in L^1(\R^2)\) is compactly supported, differentiation under the Fourier transform gives
		\[
		\partial^\alpha\widehat{K^0}(\xi)
		=
		\widehat{(-2\pi i x)^\alpha K^0}(\xi).
		\]
		Writing \(x=r\theta\) with \(r\in(1/2,2)\) and \(\theta\in\Sph\), and again
		using \(dx=2\pi r\,dr\,d\sigma(\theta)\), together with
		\[
		K^0(x)=\chi(r)\frac{\Omega(\theta)}{r^2},
		\]
		we obtain
		\begin{align*}
			\partial^\alpha\widehat{K^0}(\xi)
			&=
			2\pi(-2\pi i)^{|\alpha|}
			\int_{\Sph}\int_{1/2}^2
			(r\theta)^\alpha
			\chi(r)\frac{\Omega(\theta)}{r^2}
			e^{-2\pi i r\,\xi\cdot\theta}\, r\,dr\,d\sigma(\theta) \\
			&=
			\int_{\Sph}\theta^\alpha \Omega(\theta)\,
			I_\alpha(\xi\cdot\theta)\,d\sigma(\theta),
		\end{align*}
		where
		\[
		I_\alpha(t):=
		2\pi(-2\pi i)^{|\alpha|}
		\int_{1/2}^2 r^{|\alpha|-1}\chi(r)e^{-2\pi i rt}\,dr.
		\]
		Since \(r\mapsto r^{|\alpha|-1}\chi(r)\) is smooth and compactly supported in \((1/2,2)\), one integration by parts yields
		\[
		|I_\alpha(t)|\lesssim_\alpha \min\{1,|t|^{-1}\}.
		\]
		Moreover,
		\[
		|\theta^\alpha \Omega(\theta)|\le |\Omega(\theta)|,
		\]
		and hence
		\[
		\|\theta^\alpha\Omega\|_{\cK_{1/2,\beta}}
		\le
		\|\Omega\|_{\cK_{1/2,\beta}}.
		\]
		Therefore the same high-frequency argument used above for \(\widehat{K^0}\), with
		\(\Omega\) replaced by \(\theta^\alpha\Omega\) and \(I\) replaced by \(I_\alpha\),
		gives
		\[
		|\partial^\alpha\widehat{K^0}(\xi)|
		\lesssim_\alpha
		\|\Omega\|_{\cK_{1/2,\beta}}
		|\xi|^{-1/2}(\log |\xi|)^{-\beta},
		\qquad |\xi|\ge 4.
		\]
		This proves \eqref{eq:K0-deriv-high}.
	\end{proof}
	
	\begin{corollary}\label{cor:Kj0-high}
		Let \(j\ge2\). Then on the support of \(\eta(2^{-j}\abs{\xi})\) one has
		\begin{equation}\label{eq:Kj0-pointwise}
			\abs{\widehat{K_j^0}(\xi)}
			\lesssim
			\norm{\Omega}_{\cK_{1/2,\beta}}\,2^{-j/2}j^{-\beta},
		\end{equation}
		and, for each multi-index \(\alpha\neq0\),
		\begin{equation}\label{eq:Kj0-deriv}
			\abs{\p^\alpha \widehat{K_j^0}(\xi)}
			\lesssim
			\norm{\Omega}_{\cK_{1/2,\beta}}\,2^{-j/2}j^{-\beta}.
		\end{equation}
		The implicit constants may depend on \(\beta\); in the derivative estimate,
		they may also depend on \(\alpha\) and finitely many derivatives of \(\eta\).
		They are independent of \(j\) and \(\Omega\).
	\end{corollary}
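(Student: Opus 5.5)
The corollary is a direct consequence of Lemma~\ref{lem:prototype-fourier} together with the Leibniz rule. By definition \(K_j^0=\Delta_jK^0\), so
\[
\widehat{K_j^0}(\xi)=\eta(2^{-j}\abs{\xi})\,\widehat{K^0}(\xi).
\]
For \(j\ge2\) the support of \(\eta(2^{-j}\abs{\xi})\) lies in \(2^{j-1}<\abs{\xi}<2^{j+1}\), hence in \(\abs{\xi}\ge2^{j-1}\ge2\). For \(j\ge3\) this region is inside \(\abs{\xi}\ge4\). The single case \(j=2\), where \(2\le\abs{\xi}<4\), I would handle by the trivial bound \(\abs{\widehat{K^0}}\le\norm{K^0}_{L^1}\lesssim\norm{\Omega}_{L^1}\lesssim\norm{\Omega}_{\cK_{1/2,\beta}}\) from Lemma~\ref{lem:L1-from-K}. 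This is the intermediate-region remark after Lemma~\ref{lem:prototype-fourier}. The derivatives there are bounded by \(\norm{x^\alpha K^0}_{L^1}\lesssim\norm{\Omega}_{L^1}\), since \(K^0\) is supported in \(\abs{x}\le2\). A fixed \(j\) contributes only a constant factor \(2^{-1}2^{-\beta}\) to the target bound.

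For the pointwise estimate \eqref{eq:Kj0-pointwise}, I would use \(0\le\eta\le C\) and \eqref{eq:K0-high}. On the support, \(\abs{\xi}\simeq2^j\) and \(\log\abs{\xi}\simeq j\), since \(\log\abs{\xi}\ge(j-1)\log2\ge\frac{j}{2}\log2\) for \(j\ge2\). This gives \(\abs{\xi}^{-1/2}(\log\abs{\xi})^{-\beta}\lesssim_\beta2^{-j/2}j^{-\beta}\).

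For the derivative estimate \eqref{eq:Kj0-deriv}, I would expand by Leibniz:
\[
\partial^\alpha\widehat{K_j^0}=\sum_{\gamma\le\alpha}\binom{\alpha}{\gamma}\partial^{\alpha-\gamma}\bigl[\eta(2^{-j}\abs{\cdot})\bigr]\,\partial^\gamma\widehat{K^0}.
\]
Each derivative of the cutoff is \(O(2^{-j\abs{\alpha-\gamma}})=O(1)\), with constants depending on finitely many derivatives of \(\eta\). This uses that \(\eta\) is supported away from \(0\), so \(\abs{\cdot}\) is smooth there. Each factor \(\partial^\gamma\widehat{K^0}\) is bounded by \eqref{eq:K0-high} when \(\gamma=0\) and by \eqref{eq:K0-deriv-high} when \(\gamma\ne0\), on the common support, giving \(2^{-j/2}j^{-\beta}\) as before. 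Summing the finitely many terms yields the claim. The constants depend only on \(\beta\), \(\alpha\) and \(\eta\), not on \(j\) or \(\Omega\).

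The only point requiring care is the passage from \(\log\abs{\xi}\) to \(j\) uniformly in \(j\ge2\), and the low case \(j=2\), which falls partly outside \(\abs{\xi}\ge4\). Both are handled above; there is no real obstacle.
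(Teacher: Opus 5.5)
Your proposal is correct and follows the paper's proof essentially step for step. You split into \(j\ge3\), where the support lies in \(|\xi|\ge4\) and Lemma~\ref{lem:prototype-fourier} applies directly, and the fixed scale \(j=2\), which you handle by the trivial \(L^1\) bounds on \(x^\alpha K^0\); the derivative estimate then comes from the Leibniz rule with the \(2^{-j|\gamma|}\) bounds on the cutoff. One small imprecision: for \(j=2\) the support is the whole annulus \(2\le|\xi|\le8\), not \(2\le|\xi|<4\), but your trivial bound covers all of it, exactly as in the paper.
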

	
	\begin{proof}
		Since
		\[
		\widehat{K_j^0}(\xi)=\eta(2^{-j}\abs{\xi})\widehat{K^0}(\xi),
		\]
		and
		\[
		\supp\eta(2^{-j}\abs{\xi})\subset\{2^{j-1}\le\abs{\xi}\le2^{j+1}\},
		\]
		we have \(\abs{\xi}\simeq 2^j\) on the support. Thus
		\[
		(\log\abs{\xi})^{-\beta}\simeq j^{-\beta}
		\qquad (j\ge2),
		\]
		because \(\log\abs{\xi}=j\log 2+O(1)\) there.
		
		If \(j\ge3\), then \(\abs{\xi}\ge4\) on the support, so
		\eqref{eq:Kj0-pointwise} follows from \eqref{eq:K0-high}. For \(j=2\), the
		support lies in the fixed annulus \(\{2\le \abs{\xi}\le 8\}\), where
		\[
		|\widehat{K^0}(\xi)|
		\le \|K^0\|_{L^1(\R^2)}
		\lesssim \|\Omega\|_{\cK_{1/2,\beta}}.
		\]
		Since \(2^{-j/2}j^{-\beta}\) is then a positive constant depending only on
		\(\beta\), \eqref{eq:Kj0-pointwise} follows for every \(j\ge2\).
		
		For \eqref{eq:Kj0-deriv}, Leibniz' rule gives
		\[
		\p^\alpha\widehat{K_j^0}(\xi)
		=
		\sum_{\alpha_1+\alpha_2=\alpha}
		C_{\alpha_1,\alpha_2}\,
		\p^{\alpha_1}\bigl[\eta(2^{-j}\abs{\xi})\bigr]\,
		\p^{\alpha_2}\widehat{K^0}(\xi).
		\]
		On the support of \(\eta(2^{-j}|\xi|)\), where \(|\xi|\simeq 2^j\), every
		derivative of the radial cutoff satisfies
		\[
		\abs{\p^\gamma[\eta(2^{-j}\abs{\xi})]}
		\le C_\gamma 2^{-j\abs{\gamma}}
		\qquad \text{on }\supp\eta(2^{-j}\abs{\xi}).
		\]
		For \(j\ge3\), the terms in the Leibniz expansion with \(\alpha_2=0\) are
		controlled by \eqref{eq:K0-high}, and those with \(\alpha_2\ne0\) by
		\eqref{eq:K0-deriv-high}. Thus every term is bounded by
		\[
		\lesssim
		\|\Omega\|_{\cK_{1/2,\beta}}\,2^{-j|\alpha_1|}\,2^{-j/2}j^{-\beta}
		\lesssim
		\|\Omega\|_{\cK_{1/2,\beta}}\,2^{-j/2}j^{-\beta}.
		\]
		If \(j=2\), then \(\supp\eta(2^{-j}|\xi|)\subset\{2\le |\xi|\le 8\}\), and for
		every multi-index \(\alpha_2\),
		\[
		|\partial^{\alpha_2}\widehat{K^0}(\xi)|
		\le \|(-2\pi i x)^{\alpha_2}K^0\|_{L^1(\R^2)}
		\lesssim_{\alpha_2}\|\Omega\|_{\cK_{1/2,\beta}}.
		\]
		Hence \eqref{eq:Kj0-deriv} also follows, because \(j=2\) is a fixed scale.
	\end{proof}

	\subsection[Low-frequency analysis for \(j\le0\)]
	{Low-frequency analysis for \texorpdfstring{\(j\le0\)}{j <= 0}}

	The negative scales are treated as in \cite{DQX0404}; we include the argument
	for completeness.
	
	\begin{proposition}\label{prop:negative-scales}
		Let \(1<p_1,p_2,p<\infty\) with \(1/p=1/p_1+1/p_2\). Then, for every \(j\le0\),
		\[
		\norm{T_j(f_1,f_2)}_{L^p(\R)}
		\lesssim
		2^j\norm{\Omega}_{\cK_{1/2,\beta}}
		\norm{f_1}_{L^{p_1}(\R)}\norm{f_2}_{L^{p_2}(\R)}.
		\]
		Consequently,
		\[
		\sum_{j\le0}\norm{T_j}_{L^{p_1}\times L^{p_2}\to L^p}
		\lesssim
		\norm{\Omega}_{\cK_{1/2,\beta}}.
		\]
	\end{proposition}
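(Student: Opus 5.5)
The plan is to show that for \(j\le0\) the operator \(T_j\) is dominated by products of maximal-type averages, with a gain of \(2^j\) coming from cancellation. By definition \(K_j=\sum_i\Delta_{j-i}K^i\), and by homogeneity \(\Delta_{j-i}K^i(x)=2^{-2i}(\Delta_jK^0)(2^{-i}x)\). So \(T_j\) is a sum over \(i\) of dyadic dilates of the single bilinear operator with kernel \(\Delta_jK^0\), whose multiplier is \(\widehat{K_j^0}(\xi)=\eta(2^{-j}|\xi|)\widehat{K^0}(\xi)\). For \(j\le0\) this multiplier is supported in \(|\xi|\le 2^{j+1}\le 2\). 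There, \eqref{eq:K0-low} and Lemma~\ref{lem:L1-from-K} give \(|\widehat{K_j^0}|\lesssim 2^j\norm{\Omega}_{\cK_{1/2,\beta}}\).

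In physical space I will use the mean-zero property of \(K^0\) (which holds since \(\int\Omega=0\)) to write
\[
\Delta_jK^0(x)=\int K^0(y)\bigl(\Phi_j(x-y)-\Phi_j(x)\bigr)\,dy,
\qquad \Phi_j=2^{2j}\check\eta_0(2^j\cdot).
\]
Since \(|y|\le2\), the mean value theorem gives
\[
|\Delta_jK^0(x)|\lesssim 2^j\norm{\Omega}_{L^1}\,2^{2j}(1+2^j|x|)^{-3}.
\]
Thus \(|\Delta_{j-i}K^i(y_1,y_2)|\lesssim 2^j\norm{\Omega}_{L^1}\,\Psi_{i-j}(y_1,y_2)\), where \(\Psi_k\) is an \(L^1\)-normalized radially decreasing bump at scale \(2^{k}\). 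The naive consequence \(|T_j(f_1,f_2)|\lesssim 2^j\sum_i Mf_1\,Mf_2\) is useless, because the sum over \(i\) diverges. So the cancellation across \(i\) has to be exploited.

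To exploit it, I will use the Fourier side. The bilinear multiplier of \(T_j\) is
\[
m_j(\xi)=\sum_i\eta(2^{-j}|2^i\xi|)\widehat{K^0}(2^i\xi).
\]
For each \(\xi\) only \(O(1)\) values of \(i\) contribute. Hence \(m_j\) is, up to bounded overlap, a sum of pieces \(\widehat{K^0}(2^i\xi)\eta(2^{i-j}|\xi|)\), each localized to the annulus \(|\xi|\sim2^{j-i}\) of \(\R^2\). I will follow the argument of \cite{DQX0404}: write each piece as \(\widehat{K^0_j}(2^i\cdot)\) and expand the smooth compactly supported function \(\widehat{K^0_j}\) on \(\{|\xi|\le 2^{j+1}\}\) in a Fourier series in a box of side \(\sim 2^{j}\). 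Its coefficients decay rapidly, with size \(\lesssim 2^j\norm{\Omega}_{\cK_{1/2,\beta}}\), controlled by \eqref{eq:K0-low} and by derivative bounds of \(\widehat{K^0}\) on the ball, which are bounded by moments of \(K^0\). Each Fourier mode factors as \(e^{i a\cdot 2^{i}\xi}\) times a product cutoff. Splitting the annulus into regions where \(|\xi_1|\gtrsim|\xi|\) or \(|\xi_2|\gtrsim|\xi|\), the sum over \(i\) then becomes \(\sum_i (\tilde\Delta_{i}^{a}f_1)(\Theta_i^a f_2)\)-type paraproducts (with \(\tilde\Delta\) a Littlewood--Paley piece and \(\Theta\) a smooth averaging operator, possibly shifted by \(a\)), together with their symmetric analogues. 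Such paraproducts are bounded on \(L^{p_1}\times L^{p_2}\to L^p\) with at most polynomial growth in \(|a|\) (shifted square and maximal functions). This growth is absorbed by the rapid decay of the coefficients, yielding the bound \(2^j\norm{\Omega}_{\cK_{1/2,\beta}}\). Summing the geometric series over \(j\le0\) then gives the second claim.

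The main obstacle is the last step: the annular pieces in \(\R^2\) are not products of one-dimensional Littlewood--Paley pieces. One must check that after the Fourier series expansion each term still has one genuinely frequency-localized (mean-zero) factor, so that square-function estimates apply and the sum over \(i\) converges. Handling the part where \(\xi_1+\xi_2\) is small (output near zero frequency) may require placing the Littlewood--Paley piece on the dual side; for this I would use the duality symmetry together with \(p>1\).
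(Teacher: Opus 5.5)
Your argument is correct. It rests on the same two inputs as the paper's proof, but it is organized differently and is more laborious.

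The shared inputs are as follows. For $j\le 0$, each summand of $m_j(\xi)=\sum_i\eta(2^{i-j}|\xi|)\widehat{K^0}(2^i\xi)$ has size $\lesssim 2^j\norm{\Omega}_{\cK_{1/2,\beta}}$, with only $O(1)$ summands nonzero at each $\xi$. This follows from \eqref{eq:K0-low} for $j\le -1$ and the trivial $L^1$ bound at $j=0$. In addition, all derivatives of $\widehat{K^0}$ on $\{|\zeta|\le 2\}$ are bounded by moments of $K^0$, hence by $\norm{\Omega}_{L^1}$.

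The paper uses these facts directly. It applies Leibniz' rule together with $2^i\simeq 2^j|\xi|^{-1}$ on the support, and the observation $2^{j|\alpha_2|}\le 2^j$ for $|\alpha_2|\ge 1$, $j\le 0$. This yields the Coifman--Meyer symbol bounds $|\partial^\alpha m_j(\xi)|\lesssim 2^j\norm{\Omega}_{\cK_{1/2,\beta}}|\xi|^{-|\alpha|}$, after which the paper cites the bilinear Coifman--Meyer theorem.

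Your route opens that black box instead. You expand $\widehat{K^0_j}$ in a Fourier series at scale $2^j$, with coefficients controlled by $\sup_{|\alpha|\le N}2^{j|\alpha|}\norm{\partial^\alpha \widehat{K^0_j}}_\infty\lesssim 2^j\norm{\Omega}_{\cK_{1/2,\beta}}$; this uses the same inequality $2^{j|\alpha_2|}\le 2^j$. You then split into cones with product cutoffs and bound the resulting shifted paraproducts, using duality where $\xi_1+\xi_2$ is small. This is essentially the standard proof of the Coifman--Meyer theorem.

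Consequently, what you flag as the main obstacle, the non-product annular structure, disappears once the symbol inequalities are checked. The theorem requires only those differential inequalities, and its proof performs exactly the cone splitting you describe. Your version is self-contained and uses $p>1$ only in the duality step; the paper's version takes a few lines. The physical-space kernel bound in your second paragraph plays no role and can be dropped.
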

	
	\begin{proof}
		Set \(m_j:=\widehat{K_j}\). By homogeneity,
		\[
		m_j(\xi)=\sum_{i\in\mathbb Z}\eta(2^{i-j}\abs{\xi})\widehat{K^0}(2^i\xi).
		\]
		Fix \(\xi\neq0\). Only the indices \(i\) for which
		\(2^i\abs{\xi}\simeq2^j\) contribute. If \(j\le-1\), the support condition
		\(\supp\eta\subset(1/2,2)\) gives \(|2^i\xi|\le1\), and
		\eqref{eq:K0-low} yields
		\[
		\abs{\widehat{K^0}(2^i\xi)}
		\lesssim_\beta 2^j\norm{\Omega}_{\cK_{1/2,\beta}}.
		\]
		At the fixed scale \(j=0\), the low-frequency estimate need not hold throughout
		the support, where \(|2^i\xi|\) may be as large as \(2\). The trivial estimate
		\[
		\abs{\widehat{K^0}(2^i\xi)}
		\le \norm{K^0}_{L^1(\R^2)}
		\lesssim_\beta \norm{\Omega}_{\cK_{1/2,\beta}}
		=2^0\norm{\Omega}_{\cK_{1/2,\beta}}
		\]
		gives the required bound. Thus the zero-order Coifman--Meyer estimate holds
		for every \(j\le0\).
		Moreover, because \((-2\pi ix)^\alpha K^0\in L^1(\R^2)\) for every \(\alpha\),
		every derivative of \(\widehat{K^0}\) is bounded on bounded sets. In particular,
		for \(|\zeta|\le2\) and every multi-index \(\alpha\),
		\[
		\abs{\p^\alpha\widehat{K^0}(\zeta)}\lesssim_\alpha \norm{\Omega}_{L^1(\Sph)}
		\lesssim_\beta \norm{\Omega}_{\cK_{1/2,\beta}}
		\]
		by Lemma~\ref{lem:L1-from-K}. Let \(N_{\rm CM}\) be a differentiability order
		sufficient for the bilinear Coifman--Meyer theorem in dimension one, and fix a
		multi-index \(\alpha\) with \(1\le |\alpha|\le N_{\rm CM}\). For each
		contributing index \(i\), write
		\[
		s_i(\xi):=\eta(2^{i-j}|\xi|)\widehat{K^0}(2^i\xi).
		\]
		By Leibniz' rule,
		\[
		\partial^\alpha s_i(\xi)
		=
		\sum_{\alpha_1+\alpha_2=\alpha}
		C_{\alpha_1,\alpha_2}\,
		\partial^{\alpha_1}\bigl[\eta(2^{i-j}|\xi|)\bigr]\,
		\partial^{\alpha_2}\bigl[\widehat{K^0}(2^i\xi)\bigr].
		\]
		On the support of \(\eta(2^{i-j}|\xi|)\) one has \(2^i|\xi|\simeq 2^j\), hence
		\[
		2^{i-j}\simeq |\xi|^{-1}.
		\]
		Therefore
		\[
		\bigl|\partial^{\alpha_1}[\eta(2^{i-j}|\xi|)]\bigr|
		\lesssim |\xi|^{-|\alpha_1|}.
		\]
		If \(\alpha_2=0\), the zero-order estimate above (using
		\eqref{eq:K0-low} for \(j\le-1\) and the fixed-scale bound for \(j=0\)) gives
		\[
		|\widehat{K^0}(2^i\xi)|
		\lesssim_\beta 2^j\|\Omega\|_{\cK_{1/2,\beta}}.
		\]
		If \(\alpha_2\ne0\), the boundedness of the corresponding derivatives of
		\(\widehat{K^0}\) on bounded sets gives
		\[
		\bigl|\partial^{\alpha_2}[\widehat{K^0}(2^i\xi)]\bigr|
		\lesssim_\beta
		(2^i)^{|\alpha_2|}\|\Omega\|_{\cK_{1/2,\beta}}
		\lesssim
		2^{j|\alpha_2|}|\xi|^{-|\alpha_2|}\|\Omega\|_{\cK_{1/2,\beta}}
		\lesssim
		2^j|\xi|^{-|\alpha_2|}\|\Omega\|_{\cK_{1/2,\beta}},
		\]
		since \(j\le0\) and \(|\alpha_2|\ge1\). Combining these bounds yields
		\[
		|\partial^\alpha s_i(\xi)|
		\lesssim
		2^j\|\Omega\|_{\cK_{1/2,\beta}}|\xi|^{-|\alpha|}.
		\]
		Since only \(O(1)\) indices \(i\) contribute for each fixed \(\xi\ne0\), we conclude that
		\[
		|\partial^\alpha m_j(\xi)|
		\lesssim
		2^j\|\Omega\|_{\cK_{1/2,\beta}}|\xi|^{-|\alpha|}.
		\]
		Thus \(m_j\) satisfies the bilinear Coifman--Meyer estimates with constant
		\(\lesssim 2^j\norm{\Omega}_{\cK_{1/2,\beta}}\). The asserted operator bound
		follows from the bilinear Coifman--Meyer theorem; see
		\cite{GrafakosTorres2002}. The geometric sum over \(j\le0\) completes the proof.
	\end{proof}
	
	\begin{lemma}[Dyadic reconstruction and principal-value identification]
		\label{lem:dyadic-reconstruction}
		Let \(\Omega\in\cK_{1/2,\beta}(\Sph)\) have mean zero, and let
		\(K^i,K_j^i,K_j\), and \(T_j\) be defined as above. Then
		\[
		\sum_{j\in\mathbb Z}K_j=\mathcal K_\Omega
		\qquad\text{in }\mathcal S'(\R^2),
		\]
		where \(\mathcal K_\Omega\) is the finite-part distribution associated with
		the homogeneous kernel \(K\). Consequently, for every
		\(f,g\in\mathcal S(\R)\),
		\[
		\sum_{j\in\mathbb Z}T_j(f,g)=T_\Omega(f,g)
		\qquad\text{in }\mathcal S'(\R),
		\]
		and the distribution on the right is the radial principal value in
		\eqref{eq:T-def}.
		
		More generally, suppose that, for some
		\(1<p_1,p_2,p<\infty\) with \(1/p=1/p_1+1/p_2\),
		\[
		\sum_{j\in\mathbb Z}
		\norm{T_j}_{L^{p_1}\times L^{p_2}\to L^p}<\infty.
		\]
	Then the series \(\sum_jT_j\) converges in operator norm, and its limit is
		the bounded extension of the radial principal-value operator \(T_\Omega\).
		In particular, the resulting operator is independent of the admissible
		cutoffs \(\chi\) and \(\eta\) used in the dyadic decomposition.
	\end{lemma}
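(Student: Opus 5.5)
The proof has three steps: identify $\sum_i K^i$ with $\mathcal K_\Omega$, insert the Littlewood--Paley resolution and reorder the double sum, and pass from distributional to operator-norm convergence by density.

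\emph{Step 1.} For $\Psi\in\mathcal S(\R^2)$, polar coordinates give
\[
\langle K^i,\Psi\rangle=c_{\rm rad}\int_{\Sph}\Omega(\theta)\int_0^\infty\chi(2^{-i}r)\,\Psi(r\theta)\,\frac{dr}{r}\,d\sigma(\theta).
\]
By the mean-zero condition we may replace $\Psi(r\theta)$ by $\Psi(r\theta)-\Psi(0)$ whenever $i\le0$. We then sum over $|i|\le M$. Since $\sum_i\chi(2^{-i}r)=1$, the partial sums of $\chi(2^{-i}r)$ are bounded by $1$ and converge pointwise to $1$. The integrand is dominated, uniformly in $M$, by $|\Omega(\theta)|\,\min\{1,r\}\,(1+r)^{-2}\,r^{-1}$ times a Schwartz seminorm of $\Psi$. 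Dominated convergence, together with $\Omega\in L^1$ from Lemma~\ref{lem:L1-from-K}, therefore gives $\sum_iK^i=\mathcal K_\Omega$ in $\mathcal S'$, with $\mathcal K_\Omega$ as in \eqref{eq:finite-part-L1}. The uniform domination also shows that the partial sums are bounded in $\mathcal S'$.

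\emph{Step 2.} On the Fourier side, $\widehat{K_j}=\sum_i\eta(2^{i-j}|\xi|)\widehat{K^0}(2^i\xi)$. For fixed $\xi\neq0$, substitute $k=j-i$. For each $i$, $\sum_k\eta(2^{-k}|\xi|)\widehat{K^i}(\xi)=\widehat{K^i}(\xi)$, with only $O(1)$ nonzero terms. To interchange the double sum we need absolute summability away from $\xi=0$. The bounds \eqref{eq:K0-low} and \eqref{eq:K0-high} give
\[
|\widehat{K^0}(2^i\xi)|\lesssim\min\{2^i|\xi|,\,(2^i|\xi|)^{-1/2}\},
\]
so $\sum_{i,k}|\eta(2^{i-j}|\xi|)\widehat{K^0}(2^i\xi)|\lesssim\|\Omega\|_{\cK_{1/2,\beta}}$ uniformly in $\xi$. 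Hence $\sum_j\widehat{K_j}=\sum_i\widehat{K^i}$ pointwise on $\R^2\setminus\{0\}$, with partial sums over any finite family of $(i,j)$ uniformly bounded. Dominated convergence against $\widehat\Psi$ then yields $\sum_jK_j=\sum_iK^i=\mathcal K_\Omega$ in $\mathcal S'$. The symmetric truncations in $j$ are dominated by the same bound. Pairing with $\Phi_{f,g,h}$ and using \eqref{eq:finite-part-pv-pairing} gives $\sum_jT_j(f,g)=T_\Omega(f,g)$ in $\mathcal S'(\R)$, identified with the radial principal value.

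\emph{Step 3.} Under the summability hypothesis, the partial sums $\sum_{|j|\le M}T_j$ form a Cauchy sequence in operator norm and converge to a bounded $T$. For $f,g,h\in\mathcal S$, the pairing $\langle T(f,g),h\rangle$ equals $\lim_M\langle\sum_{|j|\le M}T_j(f,g),h\rangle=\langle T_\Omega(f,g),h\rangle$ by Step~2. So $T=T_\Omega$ on Schwartz inputs, and by density $T$ is the bounded extension of $T_\Omega$. Since $T_\Omega$ is defined without reference to $\chi$ or $\eta$, the limit is independent of the cutoffs.

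The main obstacle is the interchange in Step~2. It relies on the uniform dominating bound, which uses the decay \eqref{eq:K0-high} summed over the dyadic scales $i$. One also has to check that $\widehat{K_j}$ is a genuine locally bounded function, so that pointwise convergence transfers to convergence in $\mathcal S'$. The set $\{\xi=0\}$ has measure zero and does not affect this.
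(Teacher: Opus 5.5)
Your proposal is correct and follows essentially the paper's own argument. It uses space-side dominated convergence for $\sum_i K^i\to\mathcal K_\Omega$, and pointwise absolute convergence of the Fourier double series via \eqref{eq:K0-low}--\eqref{eq:K0-high} to justify the rearrangement and pass to $\mathcal S'$; it then pairs with $\Phi_{f,g,h}$ through \eqref{eq:finite-part-pv-pairing} and uses density for the operator-norm limit. The only cosmetic difference is that you bound the full absolute double series uniformly in $\xi$ in one step, whereas the paper first proves $\sum_j K_j^i=K^i$ for each $i$ and then uses the scale-by-scale bounds on $\norm{m_j}_{L^\infty}$.
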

	
	\begin{proof}
		By Lemma~\ref{lem:L1-from-K}, \(\Omega\in L^1(\Sph)\), so the finite-part
		distribution \(\mathcal K_\Omega\) defined in \eqref{eq:finite-part-L1} is
		available.  The radial partition of unity and the cancellation of \(\Omega\)
		give
		\[
		\sum_{|i|\le N}K^i\longrightarrow\mathcal K_\Omega
		\qquad\text{in }\mathcal S'(\R^2).
		\]
		Indeed, when testing against \(\Phi\in\mathcal S(\R^2)\), on \(0<r<1\)
		one may replace \(\Phi(r\theta)\) by
		\(\Phi(r\theta)-\Phi(0)\), while the integral over \(r>1\) is absolutely
		convergent.  Dominated convergence in the two radial integrals of
		\eqref{eq:finite-part-L1}, together with
		\(\sum_{i\in\mathbb Z}\chi(2^{-i}r)=1\), proves the assertion.
		
		Each annular kernel has integral zero:
		\[
		\int_{\R^2}K^i(y)\,dy
		=c_{\rm rad}
		\left(\int_0^\infty\chi(2^{-i}r)\frac{dr}{r}\right)
		\left(\int_{\Sph}\Omega\,d\sigma\right)=0.
		\]
		Hence \(\widehat{K^i}(0)=0\). Moreover,
		\[
		\widehat{K_j^i}(\xi)
		=\eta(2^{i-j}|\xi|)\widehat{K^i}(\xi),
		\]
		and therefore
		\[
		\sum_{j\in\mathbb Z}\widehat{K_j^i}(\xi)
		=\widehat{K^i}(\xi),
		\qquad \xi\in\R^2.
		\]
		For \(\xi\ne0\), this is the Littlewood--Paley partition of unity, while
		at \(\xi=0\) both sides vanish. The partial multiplier sums are uniformly
		bounded, so dominated convergence against Schwartz functions gives
		\[
		\sum_{j\in\mathbb Z}K_j^i=K^i
		\qquad\text{in }\mathcal S'(\R^2).
		\]
		
		It remains to justify the order of summation. By homogeneity,
		\[
		m_j(\xi):=\widehat{K_j}(\xi)
		=\sum_{i\in\mathbb Z}
		\eta(2^{i-j}|\xi|)\widehat{K^0}(2^i\xi).
		\]
		For fixed \(\xi\ne0\) and \(j\), only \(O(1)\) indices \(i\) occur in
		this sum. The low-frequency estimate \eqref{eq:K0-low}, the
		high-frequency estimate \eqref{eq:K0-high}, and the fixed-scale
		\(L^1\) bound give
		\[
		\norm{m_j}_{L^\infty(\R^2)}
		\lesssim
		\norm{\Omega}_{\cK_{1/2,\beta}}
		\begin{cases}
			2^j, & j\le0,\\
			1, & j=1,\\
			2^{-j/2}j^{-\beta}, & j\ge2.
		\end{cases}
		\]
		Thus \(\sum_jm_j\) converges absolutely in \(L^\infty\).  There is also
		pointwise absolute convergence before grouping: for fixed \(\xi\ne0\),
		\[
		\sum_{i\in\mathbb Z}|\widehat{K^0}(2^i\xi)|<\infty
		\]
		by the same low- and high-frequency estimates, and each \(i\) is paired
		with only \(O(1)\) indices \(j\) by the support of \(\eta\).  We may
		therefore rearrange the double series pointwise.  The uniform bound above
		then permits passage to \(\mathcal S'\), and the two reconstruction identities
		yield
		\[
		\sum_{j\in\mathbb Z}K_j
		=\sum_{i\in\mathbb Z}K^i
		=\mathcal K_\Omega
		\qquad\text{in }\mathcal S'(\R^2).
		\]
		
		For \(f,g,h\in\mathcal S(\R)\), put
		\[
		\Phi_{f,g,h}(y_1,y_2)
		:=\int_\R f(x-y_1)g(x-y_2)\overline{h(x)}\,dx.
		\]
		Then \(\Phi_{f,g,h}\in\mathcal S(\R^2)\), and the kernel reconstruction gives
		\[
		\lim_{N\to\infty}
		\left\langle\sum_{|j|\le N}T_j(f,g),h\right\rangle
		=\langle\mathcal K_\Omega,\Phi_{f,g,h}\rangle.
		\]
		By \eqref{eq:finite-part-pv-pairing}, the right-hand side is
		\[
		\langle T_\Omega(f,g),h\rangle.
		\]
		Hence
		\[
		\sum_{j\in\mathbb Z}T_j(f,g)=T_\Omega(f,g)
		\qquad\text{in }\mathcal S'(\R).
		\]
		
		Finally, if the operator norms are summable, the partial sums converge in
		\(L^{p_1}\times L^{p_2}\to L^p\) to an operator \(T\). Since \(L^p\)
		embeds continuously into \(\mathcal S'\), the distributional identity gives
		\(T(f,g)=T_\Omega(f,g)\) for Schwartz inputs. Density gives the asserted
		bounded extension. Any other admissible pair of cutoffs reconstructs the
		same intrinsic finite-part distribution, and therefore the same bounded
		operator.
	\end{proof}
	
	\subsection{Wavelet analysis of the high-frequency pieces}
	
	\subsubsection{Product wavelets and coefficient bounds}
	
	We use the compactly supported product-wavelet system of
	\cite{GrafakosHeHonzikPark2023}; see also
	\cite{Daubechies_1988,Meyer_1992}.  Fix a sufficiently large \(M\in\N\).
	The construction provides real-valued compactly supported functions
	\(\psi_F,\psi_M\in C^{M+2}(\R)\) and a constant \(C_0>1\) such that
	\begin{enumerate}
		\item \(\norm{\psi_F}_{L^2(\R)}=\norm{\psi_M}_{L^2(\R)}=1\);
		\item \(\int_\R x^\alpha\psi_M(x)\,dx=0\) for \(0\le \alpha\le M+1\);
		\item \(\supp\psi_F,\supp\psi_M\subset[-C_0,C_0]\).
	\end{enumerate}
	For \(\lambda\in\mathbb N_0\) set
	\[
	\cI_0:=\{F,M\}^2,
	\qquad
	\cI_\lambda:=\{F,M\}^2\setminus\{(F,F)\}\quad (\lambda\ge1).
	\]
	Only finitely many wavelet types occur, and all constants below are uniform in
	\(\vec G\in\cI_\lambda\).
	
	For \(G\in\{F,M\}\), \(\lambda\in\mathbb N_0\), and \(k\in\mathbb Z\), define the one-dimensional packets
	\[
	\psi_{G,k}^\lambda(\xi):=2^{\lambda/2}\psi_G(2^\lambda\xi-k).
	\]
	For \(\vec G=(G_1,G_2)\in\cI_\lambda\) and \(\vec k=(k_1,k_2)\in\mathbb Z^2\), define the product wavelets
	\[
	\Psi_{\vec G,\vec k}^\lambda(\xi_1,\xi_2)
	:=
	\psi_{G_1,k_1}^\lambda(\xi_1)\psi_{G_2,k_2}^\lambda(\xi_2).
	\]
	Then
	\[
	\bigl\{\Psi_{\vec G,\vec k}^\lambda:\lambda\in\mathbb N_0,\ \vec G\in\cI_\lambda,\ \vec k\in\mathbb Z^2\bigr\}
	\]
	forms an orthonormal basis of \(L^2(\R^2)\).
	
	We repeatedly use the support relation
	\[
	\supp\psi_{G,k}^\lambda\subset\bigl\{\xi\in\R: \abs{2^\lambda\xi-k}\le C_0\bigr\}.
	\]
	
	Let
	\[
	m_j^0(\vec\xi):=\widehat{K_j^0}(\vec\xi)\,\vartheta(2^{-j}\vec\xi),
	\]
	where \(\vartheta\in C_c^\infty(\R^2\setminus\{0\})\) is a fixed radial cutoff satisfying
	\[
	\supp \vartheta\subset\{\vec\xi\in\R^2:1/2\le |\vec\xi|\le 2\}
	\]
	and \(\vartheta \equiv 1\) on the support of \(\eta\).
	It follows that
	\begin{equation}\label{eq:annulus-j}
		\supp m_j^0
		\subset
		\{\vec\xi\in\R^2:2^{j-1}\le |\vec\xi|\le 2^{j+1}\}.
	\end{equation}
	
	Expand the frequency-localized multiplier \(m_j^0\) in this basis:
	\[
	m_j^0(\vec\xi)
	=
	\sum_{\lambda\ge0}\sum_{\vec G\in\cI_\lambda}\sum_{\vec k\in\mathbb Z^2}
	b_{\vec G,\vec k}^{\lambda,j}\Psi_{\vec G,\vec k}^\lambda(\vec\xi),
	\]
	where
	\[
	b_{\vec G,\vec k}^{\lambda,j}
	:=
	\int_{\R^2}m_j^0(\vec\xi)\Psi_{\vec G,\vec k}^\lambda(\vec\xi)\,d\vec\xi.
	\]
	
	By \eqref{eq:annulus-j} and the wavelet support condition,
	\(b_{\vec G,\vec k}^{\lambda,j}\) can be nonzero only if
	\begin{equation}\label{eq:k-annulus}
		2^{\lambda+j-1}-\sqrt2C_0
		\le |\vec k|\le
		2^{\lambda+j+1}+\sqrt2C_0.
	\end{equation}
	Thus \(|\vec k|\simeq2^{\lambda+j}\), with constants depending only on
	\(C_0\), whenever \(\lambda+j\) exceeds a fixed constant. The remaining
	finitely many values are treated separately below.
	
	\begin{lemma}\label{lem:coeff-bounds}
		Let \(j\ge2\). Then the wavelet coefficients satisfy:
		\begin{enumerate}
			\item for every \(\lambda\ge0\),
			\begin{equation}\label{eq:A-lambda-j}
				A_{\lambda,j}:=\sup_{\vec G\in\cI_\lambda}\norm{\{b_{\vec G,\vec k}^{\lambda,j}\}_{\vec k}}_{\ell^\infty}
				\lesssim
				\norm{\Omega}_{\cK_{1/2,\beta}}
				\,2^{-j/2}j^{-\beta}2^{-\lambda(M+3)};
			\end{equation}
			\item for every \(1<q<4\) and every \(\lambda\ge0\),
			\begin{equation}\label{eq:B-lambda-j}
				B_{\lambda,j,q}:=\sup_{\vec G\in\cI_\lambda}\norm{\{b_{\vec G,\vec k}^{\lambda,j}\}_{\vec k}}_{\ell^q}
				\lesssim
				\norm{\Omega}_{\cK_{1/2,\beta}}
				\,2^{-2\lambda(1/2-1/q)}2^{-j/2}2^{2j/q}j^{-\beta}.
			\end{equation}
		\end{enumerate}
	\end{lemma}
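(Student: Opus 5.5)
For \eqref{eq:A-lambda-j} I would use the vanishing moments of the wavelets together with the derivative bounds of Corollary~\ref{cor:Kj0-high}. For \eqref{eq:B-lambda-j} I would pass from $\ell^2$ through $\ell^1$/$\ell^\infty$ interpolation, using the finite support count \eqref{eq:k-annulus}. Throughout, $m_j^0=\eta(2^{-j}|\cdot|)\widehat{K^0}\,\vartheta(2^{-j}\cdot)$. Since $\vartheta\equiv1$ on $\supp\eta$, this is just $\widehat{K_j^0}$ restricted to the annulus. Leibniz' rule, Corollary~\ref{cor:Kj0-high}, and the bound $|\p^\gamma\vartheta(2^{-j}\cdot)|\lesssim2^{-j|\gamma|}$ then give
\[
\norm{\p^\alpha m_j^0}_{L^\infty}\lesssim_\alpha\norm{\Omega}_{\cK_{1/2,\beta}}2^{-j/2}j^{-\beta},\qquad |\alpha|\le M+3,
\]
uniformly in $j\ge2$.

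\emph{Step 1: $\ell^\infty$ bound.} For $\lambda=0$ I would use $|b|\le\norm{m_j^0}_\infty\norm{\Psi}_{L^1}\lesssim\norm{\Omega}_{\cK_{1/2,\beta}}2^{-j/2}j^{-\beta}$. For $\lambda\ge1$, since $\vec G\ne(F,F)$, at least one factor, say in $\xi_1$, is $\psi_M$, which has $M+2$ vanishing moments. I would Taylor expand $m_j^0(\cdot,\xi_2)$ in $\xi_1$ around $2^{-\lambda}k_1$ to order $M+1$, with the remainder given by $\p_1^{M+2}m_j^0$. The polynomial part integrates to zero against $\psi_{M,k_1}^\lambda$. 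The remainder contributes at most
\[
\norm{\p_1^{M+2}m_j^0}_\infty\int|\xi_1-2^{-\lambda}k_1|^{M+2}|\psi^\lambda_{M,k_1}(\xi_1)|\,d\xi_1\cdot\norm{\psi^\lambda_{G_2,k_2}}_{L^1}.
\]
Each one-dimensional $L^1$ factor costs $2^{-\lambda/2}$, and the moment gains $2^{-\lambda(M+2)}$. This yields $2^{-\lambda(M+3)}$, as claimed. Using only $M+2$ derivatives makes the available smoothness of $\psi_F,\psi_M\in C^{M+2}$ irrelevant, since the derivatives fall on $m_j^0$.

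\emph{Step 2: $\ell^2$ bound, then interpolation.} By orthonormality and Plancherel,
\[
\norm{\{b^{\lambda,j}_{\vec G,\vec k}\}_{\vec k}}_{\ell^2}\le\norm{m_j^0}_{L^2}\lesssim\norm{\Omega}_{\cK_{1/2,\beta}}2^{-j/2}j^{-\beta}\cdot 2^{j},
\]
because the annulus has area about $2^{2j}$. By \eqref{eq:k-annulus}, the number of nonzero coefficients is at most $\lesssim2^{2(\lambda+j)}$ (plus $O(1)$ in the finitely many small cases).

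For $2\le q<4$ I would interpolate the $\ell^2$ and $\ell^\infty$ bounds via $\norm b_{\ell^q}\le\norm b_{\ell^2}^{2/q}\norm b_{\ell^\infty}^{1-2/q}$. This gives $2^{-j/2}j^{-\beta}2^{2j/q}$ times a factor $2^{-\lambda(M+3)(1-2/q)}$, which is far better than $2^{-2\lambda(1/2-1/q)}$.

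For $1<q<2$ I would use Hölder with the support count:
\[
\norm b_{\ell^q}\le(\#\supp)^{1/q-1/2}\norm b_{\ell^2}\lesssim 2^{2(\lambda+j)(1/q-1/2)}\,2^{j/2}j^{-\beta}.
\]
The $j$-power is $2j/q-j+j/2=2j/q-j/2$, matching the claim. The $\lambda$-power $2^{2\lambda(1/q-1/2)}$ grows, however, so I would instead take a geometric mean with the $\ell^\infty$ bound times the count, i.e.\ use $\norm b_{\ell^q}\le(\#\supp)^{1/q}\norm b_{\ell^\infty}$. This gives $2^{2(\lambda+j)/q}2^{-\lambda(M+3)}2^{-j/2}j^{-\beta}$. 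Since $M$ is large, the $\lambda$-factor here is better than $2^{-2\lambda(1/2-1/q)}$. For $\lambda\ge1$ I would use this count bound, and for $\lambda=0$ the Hölder bound; for $q\ge2$ the interpolation above already suffices.

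\emph{Main obstacle.} The main obstacle is bookkeeping rather than depth. One must make sure the $\lambda$-gain from the vanishing moments needs only $M+2$ derivatives of $m_j^0$, all controlled by \eqref{eq:Kj0-deriv} with the same $2^{-j/2}j^{-\beta}$ factor and no loss from the cutoff. One must also handle the small $\lambda+j$ cases where \eqref{eq:k-annulus} degenerates; there the number of nonzero coefficients is trivially $O(1)$, so they are harmless.
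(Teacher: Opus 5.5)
Your proof is correct, and it follows the paper for the \(\ell^\infty\) bound but takes a different route for the \(\ell^q\) bound.

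\textbf{The \(\ell^\infty\) bound.} The paper simply invokes the standard vanishing-moment estimate \(|b^{\lambda,j}_{\vec G,\vec k}|\lesssim 2^{-\lambda(M+3)}\max_{|\alpha|\le M+2}\|\partial^\alpha m_j^0\|_{L^\infty}\). Your one-variable Taylor expansion in the coordinate carrying \(\psi_M\), with the two \(2^{-\lambda/2}\) factors from the \(L^1\) norms of the packets, is exactly a proof of that estimate.

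\textbf{The \(\ell^q\) bound.} The paper proves \(\|\{b^{\lambda,j}_{\vec G,\vec k}\}_{\vec k}\|_{\ell^q}\lesssim 2^{-2\lambda(1/2-1/q)}\|m_j^0\|_{L^q}\). It does this via the wavelet square-function characterization of \(L^q\), together with the size and bounded overlap of the packets. It then uses \(\|m_j^0\|_{L^q}\lesssim\|\Omega\|_{\cK_{1/2,\beta}}2^{-j/2}j^{-\beta}2^{2j/q}\), which is the area factor of the annulus. That argument is uniform in \(1<q<\infty\) and needs neither the \(\ell^\infty\) estimate nor a count of nonzero coefficients. In fact Hölder on each coefficient, \(|b_{\vec k}|\le\|m_j^0\|_{L^q(\supp\Psi)}\|\Psi\|_{L^{q'}}\), plus bounded overlap already gives it.

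You instead combine Bessel's inequality in \(\ell^2\), your Step~1 bound, and the lattice count coming from \eqref{eq:k-annulus}. This is more elementary, since there is no Triebel--Lizorkin machinery. It also yields strictly better decay in \(\lambda\): \(2^{-\lambda(M+3)(1-2/q)}\) for \(q\ge2\) and \(2^{-\lambda(M+3-2/q)}\) for \(q<2\). The cost is that it leans on the smoothness estimates and on the support size of \(m_j^0\).

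\textbf{One misreading.} For \(1<q<2\) the claimed factor \(2^{-2\lambda(1/2-1/q)}=2^{2\lambda(1/q-1/2)}\) itself grows in \(\lambda\). So your first Hölder bound \((\#\supp)^{1/q-1/2}\|b\|_{\ell^2}\) already gives exactly \eqref{eq:B-lambda-j} for every \(\lambda\ge0\). The switch to \((\#\supp)^{1/q}\|b\|_{\ell^\infty}\) is valid but unnecessary. That switch is also a direct count bound, not a geometric mean.
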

	
	\begin{proof}
		\emph{The \(\ell^\infty\) bound.}
		We use the standard coefficient estimate for compactly supported product
		wavelets with moments through order \(M+1\), applied to \(C^{M+2}\) functions in
		dimension two; see, for instance, \cite[Chapter~VI]{Meyer_1992} or
		\cite[Chapter~1]{Triebel_2006}. In the present normalization this yields
		\[
		\abs{b_{\vec G,\vec k}^{\lambda,j}}
		\lesssim
		2^{-\lambda(M+3)}
		\max_{\abs{\alpha}\le M+2}
		\norm{\p^\alpha m_j^0}_{L^\infty(\R^2)}.
		\]
		Here the Taylor remainder has order \(M+2\), and the two-dimensional
		\(L^2\) normalization contributes one additional power of \(2^{-\lambda}\),
		accounting for the exponent \(M+3\). Corollary~\ref{cor:Kj0-high} and the smooth
		cutoff \(\vartheta(2^{-j}\cdot)\) provide the required finite-order derivative
		bounds for \(m_j^0\). Hence
		\[
		\abs{b_{\vec G,\vec k}^{\lambda,j}}
		\lesssim
		\norm{\Omega}_{\cK_{1/2,\beta}}\,2^{-j/2}j^{-\beta}2^{-\lambda(M+3)}.
		\]
		Taking the supremum over \(\vec k\) and \(\vec G\) gives
		\eqref{eq:A-lambda-j}.
		
		\smallskip
		\noindent\emph{The \(\ell^q\) bound.}
		For compactly supported product wavelets, the wavelet square-function
		inequality, equivalently the wavelet characterization of \(F^0_{q,2}=L^q\),
		gives, uniformly in \(\lambda\) and the finitely many types
		\(\vec G\in\cI_\lambda\),
		\begin{equation}\label{eq:lq-wavelet-general}
			\left\|\Bigl(\sum_{\vec k\in\mathbb Z^2}
			\abs{b_{\vec G,\vec k}^{\lambda,j}\Psi_{\vec G,\vec k}^\lambda}^2\Bigr)^{1/2}\right\|_{L^q(\R^2)}
			\lesssim
			\norm{m_j^0}_{L^q(\R^2)};
		\end{equation}
		see \cite[Chapter~1]{Triebel_2006}. The compact support and uniformly bounded
		overlap of the wavelets also imply
		\[
		\norm{\{b_{\vec G,\vec k}^{\lambda,j}\}_{\vec k}}_{\ell^q}
		\lesssim
		2^{-\lambda(1-2/q)}
		\left\|\Bigl(\sum_{\vec k\in\mathbb Z^2}
		\abs{b_{\vec G,\vec k}^{\lambda,j}\Psi_{\vec G,\vec k}^\lambda}^2\Bigr)^{1/2}\right\|_{L^q(\R^2)}.
		\]
		Combining this with \eqref{eq:lq-wavelet-general} gives
		\[
		\norm{\{b_{\vec G,\vec k}^{\lambda,j}\}_{\vec k}}_{\ell^q}
		\lesssim
		2^{-2\lambda(1/2-1/q)}\norm{m_j^0}_{L^q(\R^2)}.
		\]
		It remains to estimate the \(L^q\) norm of \(m_j^0\). By \eqref{eq:annulus-j} and
		\eqref{eq:Kj0-pointwise},
		\begin{align*}
			\norm{m_j^0}_{L^q(\R^2)}
			&\le
			\Bigl(\int_{\{2^{j-1}\le\abs{\vec\xi}\le2^{j+1}\}}
			\abs{m_j^0(\vec\xi)}^q\,d\vec\xi\Bigr)^{1/q}
			\\
			&\lesssim
			\norm{\Omega}_{\cK_{1/2,\beta}}
			\Bigl(\int_{\{2^{j-1}\le\abs{\vec\xi}\le2^{j+1}\}}
			(2^{-j/2}j^{-\beta})^q\,d\vec\xi\Bigr)^{1/q}
			\\
			&\lesssim
			\norm{\Omega}_{\cK_{1/2,\beta}}
			\,2^{-j/2}j^{-\beta}2^{2j/q}.
		\end{align*}
		Substitution in the preceding inequality gives
		\eqref{eq:B-lambda-j}, with constants independent of \(j\) and \(\lambda\).
	\end{proof}
	
	\begin{remark}\label{rem:area-factor}
		The factor \(2^{2j/q}\) in \eqref{eq:B-lambda-j} is the \(q\)th root of
		the area of the two-dimensional annulus in \eqref{eq:annulus-j}. In the
		\(l=2\) regime below, raising this factor to the power \(q/4\) produces
		\(2^{j/2}\), which exactly cancels the \(2^{-j/2}\) decay in the endpoint
		Fourier estimate at the critical index \(a=\frac12\).
	\end{remark}
	
	\subsection[Model operator decomposition and the endpoint estimate]
	{Model operator decomposition and the endpoint estimate}
	
	Recall that
	\[
	m_j^0(\vec\xi)=\widehat{K_j^0}(\vec\xi)\vartheta(2^{-j}\vec\xi).
	\]
	The full multiplier piece is
	\[
	m_j(\vec\xi)
	=
	\sum_{i\in\mathbb Z} m_j^0(2^i\vec\xi)
	=
	\sum_{\gamma\in\mathbb Z} m_j^0(2^{-\gamma}\vec\xi),
	\]
	where the second identity follows by setting \(\gamma=-i\). Substitution of
	the wavelet expansion of \(m_j^0\) gives
	\[
	m_j(\vec\xi)
	=
	\sum_{\lambda\ge0}\sum_{\vec G\in\cI_\lambda}\sum_{\gamma\in\mathbb Z}
	\sum_{\vec k\in\cU^{\lambda+j}}
	b_{\vec G,\vec k}^{\lambda,j}
	\prod_{u=1}^{2}\psi_{G_u,k_u}^\lambda(2^{-\gamma}\xi_u),
	\]
	where, consistently with \eqref{eq:k-annulus}, we set
	\begin{equation}\label{eq:U-def}
		\cU^{\lambda+j}:=
		\left\{\vec k\in\mathbb Z^2:
		\max\{0,2^{\lambda+j-1}-\sqrt2C_0\}
		\le|\vec k|\le2^{\lambda+j+1}+\sqrt2C_0
		\right\}.
	\end{equation}
	This set contains every index corresponding to a nonzero coefficient.  After
	enlarging the fixed threshold \(J_0\) below if necessary, for \(j\ge J_0\) it
	is contained in the multiplicative annulus
	\(\{2^{\lambda+j-3}\le|\vec k|\le2^{\lambda+j+3}\}\) required by the model
	estimate.
	
	For \(G\in\{F,M\}\), \(k\in\mathbb Z\), \(\lambda\in\mathbb N_0\), and \(\gamma\in\mathbb Z\), define the localized one-dimensional operators
	\[
	L_{G,k}^{\lambda,\gamma}f
	:=
	\bigl(\psi_{G,k}^\lambda(2^{-\gamma}\cdot)\,\widehat f(\cdot)\bigr)^\vee.
	\]
	Then \(T_j\) admits the representation
	\begin{equation}\label{eq:model-decomp}
		T_j(f_1,f_2)
		=
		\sum_{\lambda\ge0}\sum_{\vec G\in\cI_\lambda}\sum_{\gamma\in\mathbb Z}
		\sum_{\vec k\in\cU^{\lambda+j}}
		b_{\vec G,\vec k}^{\lambda,j}
		\prod_{u=1}^{2}L_{G_u,k_u}^{\lambda,\gamma}f_u.
	\end{equation}
	
	Following Proposition~2.4 of
	\cite{GrafakosHeHonzikPark2023}, decompose \(\cU^{\lambda+j}\) according to the
	number of large coordinates.
	Fix \(N_0>2C_0\), and set
	\begin{align*}
		\cU_{1,1}^{\lambda+j}
		&:=\{\vec k\in\cU^{\lambda+j}:|k_1|\ge N_0>|k_2|\},\\
		\cU_{1,2}^{\lambda+j}
		&:=\{\vec k\in\cU^{\lambda+j}:|k_2|\ge N_0>|k_1|\},\\
		\cU_1^{\lambda+j}&:=
		\cU_{1,1}^{\lambda+j}\cup\cU_{1,2}^{\lambda+j},\\
		\cU_2^{\lambda+j}
		&:=\{\vec k\in\cU^{\lambda+j}:|k_1|\ge N_0,\ |k_2|\ge N_0\}.
	\end{align*}
	The two components of \(\cU_1^{\lambda+j}\) correspond to the \(l=1\) case
	of that proposition, with the coordinates interchanged for the second
	component; \(\cU_2^{\lambda+j}\) corresponds to the \(l=2\) case.
	
	If \(\max(|k_1|,|k_2|)<N_0\), the lower bound in
	\eqref{eq:U-def} forces \(\lambda+j\) to be bounded. Choose
	an integer \(J_0\ge2\), depending only on \(C_0\) and \(N_0\), so large that
	for \(j\ge J_0\) the region in which both coordinates are small is empty and,
	in the \(l=1\) region,
	the large packet frequency dominates the small one by a fixed factor. The
	finitely many pieces with \(2\le j<J_0\) satisfy the desired
	\(L^2\times L^2\to L^1\) bound by the fixed-scale bilinear
	Coifman--Meyer theorem, using the bounded derivatives of
	\(\widehat{K^0}\) and Lemma~\ref{lem:L1-from-K}. Their constants can be
	absorbed into one depending on \(\beta\).  We may therefore assume
	\(j\ge J_0\) in the model estimates below.
	
	The packet support relation is
	\begin{equation}\label{eq:packet-support-gamma}
		\supp\bigl(\psi_{G,k}^\lambda(2^{-\gamma}\cdot)\bigr)
		\subset
		\bigl\{\xi\in\R:
		|\xi-2^{\gamma-\lambda}k|\le C_0 2^{\gamma-\lambda}\bigr\}.
	\end{equation}
	The \(L^2\)-normalization of \(\psi_{G,k}^\lambda\) contributes a factor
	\(2^{\lambda/2}\). For a large coordinate, where \(|k|\ge N_0>C_0\), the
	fixed-\(k\) pointwise square sum of the packets is bounded by \(C\,2^\lambda\),
	rather than by an absolute constant.  We do not use this square sum, because
	the factor \(2^\lambda\) associated with the two-dimensional product-wavelet
	normalization is already incorporated in Proposition~2.4 of
	\cite{GrafakosHeHonzikPark2023}.  Instead, that proposition requires a
	square-summable family of frequency truncations of the inputs; we now construct
	this family.
	
	There exist constants \(0<c_1<C_1<\infty\), depending only on
	\(C_0,N_0\), such that every packet in a coordinate declared large is
	supported in
	\[
	\{\xi:c_1 2^{\gamma-\lambda}\le |\xi|
	\le C_1 2^{\gamma+j}\}.
	\]
	Choose smooth functions \(\rho_{\lambda,\gamma,j}\), uniformly bounded by
	one, equal to one on this annulus and supported in the fixed enlargement
	\begin{equation}\label{eq:rho-support}
		\supp\rho_{\lambda,\gamma,j}
		\subset
		\{\xi:c_2 2^{\gamma-\lambda}\le |\xi|
		\le C_2 2^{\gamma+j}\},
	\end{equation}
	where \(0<c_2<c_1<C_1<C_2\). For \(u=1,2\), define
	\begin{equation}\label{eq:f-lambda-gamma-j}
		\widehat{f_u^{\lambda,\gamma,j}}(\xi)
		:=\rho_{\lambda,\gamma,j}(\xi)\widehat f_u(\xi).
	\end{equation}
	Whenever the \(u\)-th coordinate is large, we then have
	\begin{equation}\label{eq:L-input-truncation}
		L_{G_u,k_u}^{\lambda,\gamma}f_u
		=L_{G_u,k_u}^{\lambda,\gamma}f_u^{\lambda,\gamma,j}.
	\end{equation}
	For each fixed \(\xi\ne0\), condition \eqref{eq:rho-support} holds for at
	most \(C(\lambda+j+1)\) integers \(\gamma\). Plancherel's theorem therefore
	gives
	\begin{align}
		\sum_{\gamma\in\mathbb Z}
		\norm{f_u^{\lambda,\gamma,j}}_{L^2(\R)}^2
		&\lesssim(\lambda+j+1)\norm{f_u}_{L^2(\R)}^2\notag\\
		&\lesssim j(\lambda+1)\norm{f_u}_{L^2(\R)}^2,
		\qquad j\ge2,
		\label{eq:f-square-gamma}
	\end{align}
	uniformly in \(\lambda\).  This is the square-function loss used below.
	
	For clarity, we isolate the precise form of
	\cite[Proposition~2.4]{GrafakosHeHonzikPark2023} used in this proof.
	Fix \(j\ge J_0\), \(\lambda\ge0\), and
	\(\vec G=(G_1,G_2)\in\cI_\lambda\).  For \(s=1,2\), let
	\(s^\ast=3-s\), and define
	\begin{align*}
		T_{j,\lambda,\gamma,\vec G}^{(1,s)}(f_1,f_2)
		&:=
		\sum_{\vec k\in\cU_{1,s}^{\lambda+j}}
		b_{\vec G,\vec k}^{\lambda,j}
		\prod_{u=1}^2L_{G_u,k_u}^{\lambda,\gamma}f_u,\\
		T_{j,\lambda,\gamma,\vec G}^{(1)}
		&:=
		T_{j,\lambda,\gamma,\vec G}^{(1,1)}
		+T_{j,\lambda,\gamma,\vec G}^{(1,2)},\\
		T_{j,\lambda,\gamma,\vec G}^{(2)}(f_1,f_2)
		&:=
		\sum_{\vec k\in\cU_2^{\lambda+j}}
		b_{\vec G,\vec k}^{\lambda,j}
		\prod_{u=1}^2L_{G_u,k_u}^{\lambda,\gamma}f_u.
	\end{align*}
	Thus the superscript \(l\) records the number of large coordinates of
	\(\vec k\). Exactly one input is frequency truncated in the \(l=1\)
	sector, whereas both inputs are truncated in the \(l=2\) sector.
	
	\begin{lemma}[Bilinear model estimates]
		\label{lem:GHHP-specialized}
		With the notation above, the following estimates hold uniformly in
		\(j\ge J_0\), \(\lambda\ge0\), and
		\(\vec G\in\cI_\lambda\).
		\begin{enumerate}
			\item For \(s=1,2\),
			\begin{align}
				&\left\|
				\left(
				\sum_{\gamma\in\mathbb Z}
				\left|T_{j,\lambda,\gamma,\vec G}^{(1,s)}(f_1,f_2)\right|^2
				\right)^{1/2}
				\right\|_{L^1(\R)}
				\notag\\
				&\qquad\lesssim
				A_{\lambda,j}\,2^\lambda
				\left(
				\sum_{\gamma\in\mathbb Z}
				\norm{f_s^{\lambda,\gamma,j}}_{L^2(\R)}^2
				\right)^{1/2}
				\norm{f_{s^\ast}}_{L^2(\R)}.
				\label{eq:GHHP-one-large}
			\end{align}
			Consequently,
			\begin{equation}
				\left\|
				\left(
				\sum_{\gamma\in\mathbb Z}
				\left|T_{j,\lambda,\gamma,\vec G}^{(1)}(f_1,f_2)\right|^2
				\right)^{1/2}
				\right\|_{L^1}
				\lesssim
				A_{\lambda,j}2^\lambda
				j^{1/2}(\lambda+1)^{1/2}
				\prod_{u=1}^2\norm{f_u}_{L^2}.
				\label{eq:unbal-vector}
			\end{equation}
			
			\item For every \(1<q<4\),
			\begin{align}
				&\left\|
				\sum_{\gamma\in\mathbb Z}
				T_{j,\lambda,\gamma,\vec G}^{(2)}(f_1,f_2)
				\right\|_{L^1(\R)}
				\notag\\
				&\qquad\lesssim
				A_{\lambda,j}^{1-q/4}B_{\lambda,j,q}^{q/4}
				\,2^\lambda
				\prod_{u=1}^2
				\left(
				\sum_{\gamma\in\mathbb Z}
				\norm{f_u^{\lambda,\gamma,j}}_{L^2(\R)}^2
				\right)^{1/2}
				\notag\\
				&\qquad\lesssim
				A_{\lambda,j}^{1-q/4}B_{\lambda,j,q}^{q/4}
				\,2^\lambda j(\lambda+1)
				\prod_{u=1}^2\norm{f_u}_{L^2(\R)}.
				\label{eq:GHHP-two-large}
			\end{align}
		\end{enumerate}
		The sums over \(\gamma\) may initially be taken over finite sets; the
		displayed estimates then provide the convergence required below for
		Schwartz inputs.
	\end{lemma}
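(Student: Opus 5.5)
The lemma is a specialization of \cite[Proposition~2.4]{GrafakosHeHonzikPark2023} to one dimension, two inputs, and the endpoint $L^2\times L^2\to L^1$. The plan is to verify that its hypotheses hold in the present normalization and then to insert the square-function bound \eqref{eq:f-square-gamma}. Throughout we fix $j\ge J_0$, $\lambda$ and $\vec G$. All sums over $\gamma$ are first taken over finite sets, so every manipulation is finite and the constants are uniform.

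\emph{The $l=1$ sector.} Take $s=1$; the case $s=2$ follows by interchanging coordinates. In $\cU_{1,1}^{\lambda+j}$ we have $|k_2|<N_0$, so the second packet $\psi^\lambda_{G_2,k_2}(2^{-\gamma}\cdot)$ lives at frequency $\lesssim 2^{\gamma-\lambda}$. The first packet lives at $|\xi_1|\simeq 2^{\gamma+j}$, and the choice of $J_0$ makes this dominate by a fixed factor.

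\begin{enumerate}
\item Bound all coefficients by $A_{\lambda,j}$. Sum over $k_2$, a finite set of size $O(N_0)$. For each fixed $k_2$, $L^{\lambda,\gamma}_{G_2,k_2}f_2$ is pointwise bounded by $2^{\lambda/2}$ times a Hardy--Littlewood maximal function of $f_2$. Its convolution kernel is an $L^1$-normalized bump at scale $2^{\lambda-\gamma}$, multiplied by $2^{\lambda/2}$.
\item The sum over $k_1$ of $b\,L^{\lambda,\gamma}_{G_1,k_1}f_1^{\lambda,\gamma,j}$ is an $L^2$-bounded operator applied to $f_1^{\lambda,\gamma,j}$. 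By orthogonality of the packets (bounded overlap), it has norm $\lesssim A_{\lambda,j}2^{\lambda/2}$, and we use \eqref{eq:L-input-truncation} here.
\item Apply Cauchy--Schwarz in $\gamma$ inside the $L^1$ norm, together with $\sup_\gamma|L^{\lambda,\gamma}f_2|\lesssim 2^{\lambda/2}Mf_2$. This gives
$$\Bigl\|\Bigl(\sum_\gamma|T^{(1,1)}|^2\Bigr)^{1/2}\Bigr\|_{L^1}\le \bigl\|2^{\lambda/2}Mf_2\bigr\|_{2}\,\Bigl(\sum_\gamma\bigl\|\cdots f_1^{\lambda,\gamma,j}\bigr\|_2^2\Bigr)^{1/2},$$
which is \eqref{eq:GHHP-one-large}.
\item Inserting \eqref{eq:f-square-gamma} gives \eqref{eq:unbal-vector}.
\end{enumerate}

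\emph{The $l=2$ sector.} The first step is to rewrite the operator by duality against $h\in L^\infty$. For each $\gamma$, $\langle T^{(2)}_{\gamma}(f_1,f_2),h\rangle$ is a bilinear form in the sequences $\bigl(L^{\lambda,\gamma}_{G_1,k_1}f_1^{\lambda,\gamma,j}\bigr)_{k_1}$ and $\bigl(L^{\lambda,\gamma}_{G_2,k_2}f_2^{\lambda,\gamma,j}\bigr)_{k_2}$, with the coefficient matrix $b_{\vec k}$. The key input, as in \cite{GrafakosHeHonzikPark2023}, is the estimate for this matrix form obtained by interpolating two bounds.
\begin{enumerate}
\item The trivial Schur-type bound, with constant $A_{\lambda,j}$ times the number of coordinate neighbours.
\item The bound exploiting $\ell^q$ sparsity. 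One counts, via $\|b\|_{\ell^q}$, how many coefficients exceed a level, and combines this with the $\ell^4$-type orthogonality of the products of packets. Their output frequencies $2^{\gamma-\lambda}(k_1+k_2)$ have bounded overlap once $\vec k$ ranges over the annulus.
\end{enumerate}
This yields, for each $\gamma$, the bound $A^{1-q/4}B^{q/4}2^\lambda\|f_1^{\lambda,\gamma,j}\|_2\|f_2^{\lambda,\gamma,j}\|_2\|h\|_\infty$. Summing over $\gamma$ by Cauchy--Schwarz, and then applying \eqref{eq:f-square-gamma} to each factor, gives \eqref{eq:GHHP-two-large}.

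The \textbf{main obstacle} is this sparse $\ell^q$ step. Concretely, one must check that the $2^\lambda$ normalization, with $2^{\lambda/2}$ coming from each packet, is exactly the factor appearing in the GHHP proposition. One must also check that its proof uses only the support properties \eqref{eq:packet-support-gamma} and the annulus localization $|\vec k|\simeq2^{\lambda+j}$ guaranteed for $j\ge J_0$. The plan is to follow their level-set decomposition of $\{b_{\vec k}\}$ verbatim and track the constants.
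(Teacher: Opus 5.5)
The paper proves this lemma only by matching parameters in \cite[Proposition~2.4]{GrafakosHeHonzikPark2023}: \(m=2\), \(n=1\), \(\mu=j\), \(r=2\) for \(l=1\); \(l=m=2\) with exponent \((l-1)q/(2l)=q/4\) for \(l=2\); and \(2^{\lambda mn/2}=2^\lambda\). It then inserts \eqref{eq:f-square-gamma}.

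Your \(l=1\) argument is correct and is an elementary substitute for part~(1) of that proposition. The \(O(N_0)\) small packets are dominated by \(2^{\lambda/2}Mf_{s^\ast}\). For each small index, the sum over the large index is an \(L^2\) multiplier of norm \(\lesssim A_{\lambda,j}2^{\lambda/2}\) by bounded overlap. H\"older in \(x\) then gives \eqref{eq:GHHP-one-large}.

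In the \(l=2\) sector there is a gap, at exactly the step you flag. Reducing to a single-\(\gamma\) bound (both inputs truncated via \eqref{eq:L-input-truncation}) and then applying Cauchy--Schwarz in \(\gamma\) is fine. The mechanism you offer for the single-\(\gamma\) bound, however, fails.

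Products of packets do not have boundedly overlapping output frequencies. For \(|n|\) small, the antidiagonal \(k_1+k_2=n\) contains \(\sim2^{\lambda+j}\) points of \(\cU_2^{\lambda+j}\), all with both coordinates large. All the corresponding products have Fourier support near \(2^{\gamma-\lambda}n\). A Schur-type bound counting entries per row also loses a power of \(2^{\lambda+j}\), since a row of the annulus can hold that many indices. Any loss of a positive power of \(2^j\) destroys the exact cancellation \(2^{-j/2}\cdot2^{j/2}=1\) on which Proposition~\ref{prop:L2-endpoint} rests (Remark~\ref{rem:area-factor}). Deferring to the cited proof ``verbatim'' simply reverts to the paper's citation rather than proving anything.

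What closes the single-scale estimate is bounded overlap of the packets in each coordinate separately, not of their products. Pointwise Cauchy--Schwarz in \(k_1\) (or in \(k_2\)), H\"older in \(x\), and Plancherel give
\[
\Bigl\|\sum_{\vec k}b_{\vec k}\,L^{\lambda,\gamma}_{G_1,k_1}f_1\,L^{\lambda,\gamma}_{G_2,k_2}f_2\Bigr\|_{L^1}
\lesssim
2^\lambda\min\Bigl\{\sup_{k_2}\Bigl(\sum_{k_1}|b_{\vec k}|^2\Bigr)^{1/2},\ \sup_{k_1}\Bigl(\sum_{k_2}|b_{\vec k}|^2\Bigr)^{1/2}\Bigr\}\norm{f_1}_{L^2}\norm{f_2}_{L^2}.
\]

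Now split \(b\) into the level sets \(\{2^{-s-1}A_{\lambda,j}<|b_{\vec k}|\le2^{-s}A_{\lambda,j}\}\). Their cardinalities satisfy \(N_s\le(2^{s+1}B_{\lambda,j,q}/A_{\lambda,j})^q\). Indices lying in columns (fixed \(k_2\)) with more than \(N_s^{1/2}\) entries occupy at most \(N_s^{1/2}\) columns, so every row meets them at most \(N_s^{1/2}\) times. The remaining columns have at most \(N_s^{1/2}\) entries each.

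Use the row bound on the first part and the column bound on the second. This gives \(2^\lambda2^{-s}A_{\lambda,j}N_s^{1/4}\lesssim2^\lambda(2^{-s}A_{\lambda,j})^{1-q/4}B_{\lambda,j,q}^{q/4}\). The sum over \(s\ge0\) converges precisely when \(q<4\), and yields the first line of \eqref{eq:GHHP-two-large} with no \(j\)-loss.

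With this repair your route proves the whole lemma without the external proposition. That works here because the target \(L^1\) is Banach and the \((\lambda+j+1)\) overlap in \(\gamma\) is already accepted in \eqref{eq:f-square-gamma}. The paper's citation is shorter but rests entirely on transcribing that proposition correctly.
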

	
	\begin{proof}[Verification of the specialization]
		We check only the correspondence between the notation of
		\cite[Proposition~2.4]{GrafakosHeHonzikPark2023} and the present
		decomposition; no part of the cited proposition is reproved here.
		
		For fixed \(j,\lambda,\vec G\), use in that proposition the coefficient
		family
		\[
		b_{\vec k}^{\lambda,\gamma,\mu}
		:=b_{\vec G,\vec k}^{\lambda,j},
		\qquad \mu=j.
		\]
		Although this family is independent of \(\gamma\), the cited result allows
		general \(\gamma\)-dependent coefficients. Lemma~\ref{lem:coeff-bounds}
		therefore supplies the required uniform bounds
		\[
		\sup_{\gamma\in\mathbb Z}
		\norm{\{b_{\vec k}^{\lambda,\gamma,j}\}_{\vec k}}_{\ell^\infty}
		\le A_{\lambda,j},
		\qquad
		\sup_{\gamma\in\mathbb Z}
		\norm{\{b_{\vec k}^{\lambda,\gamma,j}\}_{\vec k}}_{\ell^q}
		\le B_{\lambda,j,q}.
		\]
		By the choice of \(J_0\), the set \(\cU^{\lambda+j}\) lies in the
		multiplicative lattice annulus required there. Its subsets
		\(\cU_{1,1}^{\lambda+j}\), \(\cU_{1,2}^{\lambda+j}\), and
		\(\cU_2^{\lambda+j}\) are precisely the sectors with one or two large
		coordinates.
		
		For \(\cU_{1,1}^{\lambda+j}\), apply part~(1) of the cited proposition with
		\[
		m=2,\qquad n=1,\qquad \mu=j,\qquad l=1,\qquad r=2.
		\]
		The first coordinate is large, so \eqref{eq:L-input-truncation} permits the
		replacement of \(f_1\) by \(f_1^{\lambda,\gamma,j}\); the second input is
		left unchanged. Since
		\[
		2^{\lambda mn/2}=2^\lambda,
		\]
		the resulting estimate is \eqref{eq:GHHP-one-large} with \(s=1\).
		Interchanging the coordinates treats \(\cU_{1,2}^{\lambda+j}\). The
		vector-valued triangle inequality combines the two components, and
		\eqref{eq:f-square-gamma} then gives \eqref{eq:unbal-vector}.
		
		For \(\cU_2^{\lambda+j}\), apply part~(2) of the cited proposition with
		\[
		m=l=2,\qquad n=1,\qquad \mu=j.
		\]
		Its admissible coefficient exponent satisfies
		\[
		0<q<\frac{2l}{l-1}=4.
		\]
		The estimate \eqref{eq:B-lambda-j} is available for \(q>1\), so we choose
		\(1<q<4\). Both coordinates are large and
		\eqref{eq:L-input-truncation} is used in both inputs. The interpolation
		exponent on the coefficient bounds becomes
		\[
		\frac{(l-1)q}{2l}=\frac q4,
		\]
		while the wavelet normalization again contributes
		\(2^{\lambda mn/2}=2^\lambda\). This gives the first inequality in
		\eqref{eq:GHHP-two-large}; applying \eqref{eq:f-square-gamma} to the two
		inputs gives the second one.
	\end{proof}
	
	\begin{remark}[The roles of the two frequency sectors]
		\label{rem:GHHP-bookkeeping}
		The conclusions of Lemma~\ref{lem:GHHP-specialized} have different forms
		for a structural reason. In the \(l=1\) sector, the cited proposition uses
		only the \(\ell^\infty\)-coefficient bound \(A_{\lambda,j}\) and produces
		a square function in the dilation parameter \(\gamma\). Only the input
		corresponding to the large coordinate is truncated, so the overlap loss is
		\(j^{1/2}(\lambda+1)^{1/2}\). In the \(l=2\) sector, the proposition
		interpolates the \(\ell^\infty\)- and \(\ell^q\)-coefficient bounds and
		produces
		\[
		A_{\lambda,j}^{1-q/4}B_{\lambda,j,q}^{q/4}.
		\]
		It controls the ordinary sum over \(\gamma\) directly. Both inputs are
		truncated, and the two square-root overlap losses multiply to
		\(j(\lambda+1)\). In both sectors, \(2^\lambda\) is the specialization of
		\(2^{\lambda mn/2}\) to \(m=2,n=1\); it is not an additional
		frequency-overlap loss.
	\end{remark}
	
	It remains to convert the vector-valued estimate in the \(l=1\) sector
	into an estimate for the ordinary \(\gamma\)-sum. Put
	\[
	h_{\gamma,\vec G}
	:=T_{j,\lambda,\gamma,\vec G}^{(1)}(f_1,f_2).
	\]
	In \(\cU_1^{\lambda+j}\), the large index has size comparable to
	\(2^{\lambda+j}\), whereas the other index is bounded by \(N_0\).
	The packet support relation \eqref{eq:packet-support-gamma} and the choice
	of \(J_0\) imply
	\begin{equation}
		\supp\widehat h_{\gamma,\vec G}
		\subset
		\{\zeta\in\R:
		c_3 2^{\gamma+j}\le|\zeta|\le C_3 2^{\gamma+j}\},
		\label{eq:unbal-output-annulus}
	\end{equation}
	with constants independent of \(j,\lambda,\gamma\), and \(\vec G\).
	The Fourier-support synthesis inequality for
	\(H^1(\R)=F^0_{1,2}(\R)\) therefore gives, first for finite sets of
	\(\gamma\),
	\[
	\left\|\sum_\gamma h_{\gamma,\vec G}\right\|_{L^1}
	\lesssim
	\left\|\sum_\gamma h_{\gamma,\vec G}\right\|_{H^1}
	\lesssim
	\left\|
	\left(\sum_\gamma|h_{\gamma,\vec G}|^2\right)^{1/2}
	\right\|_{L^1};
	\]
	see \cite[Chapter~1]{Triebel_2006}.
	
	The same estimate applied to a tail set \(E\subset\mathbb Z\), together
	with \eqref{eq:GHHP-one-large}, has on its right-hand side the corresponding
	tail of
	\(\sum_\gamma\norm{f_s^{\lambda,\gamma,j}}_2^2\). This tail tends to zero
	because the cutoffs in \eqref{eq:rho-support} have finite overlap and the
	inputs are in \(L^2\). Hence the finite sums converge in \(H^1\), and
	therefore in \(L^1\). The possible polynomial ambiguity in homogeneous
	Littlewood--Paley synthesis vanishes because the reconstructed distribution
	belongs to \(L^1(\R)\), whereas no nonzero polynomial does.
	
	Define the full sector contributions, including the finite sum over wavelet
	types, by
	\[
	T_{j,\lambda}^{(l)}
	:=
	\sum_{\vec G\in\cI_\lambda}
	\sum_{\gamma\in\mathbb Z}
	T_{j,\lambda,\gamma,\vec G}^{(l)},
	\qquad l=1,2.
	\]
	Combining the synthesis step with \eqref{eq:unbal-vector} gives
	\begin{equation}
		\norm{T_{j,\lambda}^{(1)}(f_1,f_2)}_{L^1(\R)}
		\lesssim
		A_{\lambda,j}\,2^\lambda
		j^{1/2}(\lambda+1)^{1/2}
		\norm{f_1}_{L^2(\R)}\norm{f_2}_{L^2(\R)}.
		\label{eq:unbal-model}
	\end{equation}
	For \(l=2\), no additional synthesis is needed; summing
	\eqref{eq:GHHP-two-large} over the uniformly finite set
	\(\cI_\lambda\) yields
	\begin{equation}
		\norm{T_{j,\lambda}^{(2)}(f_1,f_2)}_{L^1(\R)}
		\lesssim
		A_{\lambda,j}^{1-q/4}B_{\lambda,j,q}^{q/4}
		\,2^\lambda j(\lambda+1)
		\norm{f_1}_{L^2(\R)}\norm{f_2}_{L^2(\R)},
		\label{eq:bal-model}
	\end{equation}
	for every fixed \(1<q<4\).
	
	\begin{proposition}\label{prop:L2-endpoint}
		For every \(j\ge2\) one has
		\[
		\norm{T_j(f_1,f_2)}_{L^1(\R)}
		\lesssim
		j^{1-\beta}\norm{\Omega}_{\cK_{1/2,\beta}}
		\norm{f_1}_{L^2(\R)}\norm{f_2}_{L^2(\R)}.
		\]
	\end{proposition}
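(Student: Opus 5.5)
Fix \(j\ge2\). We first dispose of the finitely many scales \(2\le j<J_0\). For these, the fixed-scale bilinear Coifman--Meyer theorem, together with the bounded derivatives of \(\widehat{K^0}\) and Lemma~\ref{lem:L1-from-K}, gives an \(L^2\times L^2\to L^1\) bound \(\lesssim_\beta\norm{\Omega}_{\cK_{1/2,\beta}}\). This is \(\lesssim j^{1-\beta}\norm{\Omega}_{\cK_{1/2,\beta}}\) with a \(\beta\)-dependent constant. For \(j\ge J_0\), the representation \eqref{eq:model-decomp} and the sector splitting of \(\cU^{\lambda+j}\) into \(\cU_1^{\lambda+j}\cup\cU_2^{\lambda+j}\) give
\[
T_j=\sum_{\lambda\ge0}\bigl(T_{j,\lambda}^{(1)}+T_{j,\lambda}^{(2)}\bigr).
\]
The plan is to insert the coefficient bounds of Lemma~\ref{lem:coeff-bounds} into \eqref{eq:unbal-model} and \eqref{eq:bal-model}, and then sum over \(\lambda\) by the triangle inequality in \(L^1\). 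Convergence is first taken for Schwartz inputs and finite truncations, and then passed to the limit by the same tail argument used for the synthesis step.

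\emph{The \(l=1\) sector.} By \eqref{eq:A-lambda-j}, \(A_{\lambda,j}2^\lambda\lesssim\norm{\Omega}_{\cK_{1/2,\beta}}2^{-j/2}j^{-\beta}2^{-\lambda(M+2)}\). Hence \eqref{eq:unbal-model} gives
\[
\sum_\lambda\norm{T^{(1)}_{j,\lambda}}_{L^2\times L^2\to L^1}\lesssim\norm{\Omega}_{\cK_{1/2,\beta}}2^{-j/2}j^{1/2-\beta}.
\]
This is far better than needed.

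\emph{The \(l=2\) sector.} This is the main point, and the one where the critical index enters. Take \(q\in(1,4)\) and combine \eqref{eq:A-lambda-j} with \eqref{eq:B-lambda-j}. The \(j\)-dependence is
\[
(2^{-j/2}j^{-\beta})^{1-q/4}\,(2^{-j/2}2^{2j/q}j^{-\beta})^{q/4}=2^{-j/2}\,2^{j/2}\,j^{-\beta}=j^{-\beta},
\]
which is the exact cancellation noted in Remark~\ref{rem:area-factor}. The \(\lambda\)-dependence is
\[
2^{-\lambda(M+3)(1-q/4)}\,2^{-2\lambda(1/2-1/q)q/4}\,2^\lambda\,(\lambda+1).
\]
For any fixed \(q\) strictly below \(4\), say \(q=2\), this decays geometrically once \(M\) is large. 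Concretely, the exponent is \(-\lambda\bigl((M+3)/2-1\bigr)\), so the sum over \(\lambda\) is \(O(1)\). Multiplying by the factor \(j\) coming from \eqref{eq:bal-model} then gives the bound \(\lesssim j^{1-\beta}\norm{\Omega}_{\cK_{1/2,\beta}}\norm{f_1}_2\norm{f_2}_2\).

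The only delicate issue is bookkeeping rather than estimation. We must make sure that \(q\) is fixed away from \(4\), so that the \(\ell^\infty\) factor retains a positive power \(1-q/4\) of \(2^{-\lambda(M+3)}\) to beat the growth \(2^\lambda(\lambda+1)\). Since the \(j\)-exponent is independent of \(q\), the choice \(q=2\) works. Adding the two sectors and the small-\(j\) pieces completes the proof, with an implicit constant depending on \(\beta\) and \(M\).
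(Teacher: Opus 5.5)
Your proposal is correct and follows the paper's proof essentially step for step: the finitely many scales $2\le j<J_0$ are absorbed by the fixed-scale Coifman--Meyer bound, the $l=1$ sector is summed from \eqref{eq:unbal-model} and \eqref{eq:A-lambda-j}, and in the $l=2$ sector the powers of $2^{j}$ cancel exactly as in Remark~\ref{rem:area-factor}, so the factor $j(\lambda+1)$ leaves $j^{1-\beta}$. The only cosmetic difference is that you fix $q=2$. This makes the paper's exponent $C_{M,q}$ equal to $(M+3)/2-1$, so the decay in $\lambda$ is immediate.
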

	
	\begin{proof}
		The model estimates \eqref{eq:unbal-model} and \eqref{eq:bal-model} were
		proved for \(j\ge J_0\). For the finitely many indices \(2\le j<J_0\), the
		fixed-scale Coifman--Meyer argument above gives the asserted estimate after
		enlarging the implicit constant, which may depend on \(\beta\). Indeed,
		\(j^{1-\beta}\) is bounded below by a positive \(\beta\)-dependent constant
		on this finite set. We may therefore assume \(j\ge J_0\).
		
		By \eqref{eq:model-decomp}, it suffices to sum
		\(T_{j,\lambda}^{(1)}+T_{j,\lambda}^{(2)}\) over \(\lambda\ge0\).
		
		\smallskip
		\noindent\emph{The \(l=1\) sector.}
		Combining \eqref{eq:unbal-model} and \eqref{eq:A-lambda-j}, we obtain
		\[
		\norm{T_{j,\lambda}^{(1)}}_{L^2\times L^2\to L^1}
		\lesssim
		A_{\lambda,j}2^\lambda j^{1/2}(\lambda+1)^{1/2}
		\lesssim
		2^{-j/2}j^{1/2-\beta}2^{-\lambda(M+2)}(\lambda+1)^{1/2}
		\norm{\Omega}_{\cK_{1/2,\beta}}.
		\]
		The right-hand side is summable in \(\lambda\), and its sum is bounded by
		\(j^{1-\beta}\norm{\Omega}_{\cK_{1/2,\beta}}\).
		
		\smallskip
		\noindent\emph{The \(l=2\) sector.}
		Fix \(1<q<4\). By \eqref{eq:bal-model}, \eqref{eq:A-lambda-j}, and
		\eqref{eq:B-lambda-j},
		\begin{align*}
			\norm{T_{j,\lambda}^{(2)}}_{L^2\times L^2\to L^1}
			&\lesssim
			A_{\lambda,j}^{1-q/4}B_{\lambda,j,q}^{q/4}2^\lambda j(\lambda+1)
			\\
			&\lesssim
			\Bigl(2^{-j/2}j^{-\beta}2^{-\lambda(M+3)}\Bigr)^{1-q/4}
			\\
			&\quad\times
			\Bigl(2^{-2\lambda(1/2-1/q)}
			2^{-j/2}2^{2j/q}j^{-\beta}\Bigr)^{q/4}
			2^\lambda j(\lambda+1)
			\norm{\Omega}_{\cK_{1/2,\beta}}.
		\end{align*}
		Collecting the \(j\)- and \(\lambda\)-dependent factors gives
		\begin{align*}
			\norm{T_{j,\lambda}^{(2)}}_{L^2\times L^2\to L^1}
			&\lesssim
			\Bigl(2^{-j/2}\Bigr)^{1-q/4}
			\Bigl(2^{-j/2}\Bigr)^{q/4}
			\Bigl(2^{2j/q}\Bigr)^{q/4}
			\,j^{1-\beta}
			\,(\lambda+1)\,
			2^{-\lambda C_{M,q}}
			\norm{\Omega}_{\cK_{1/2,\beta}},
		\end{align*}
		where
		\[
		C_{M,q}:=(M+3)\Bigl(1-\frac q4\Bigr)+2\Bigl(\frac12-\frac1q\Bigr)\frac q4-1.
		\]
		Moreover,
		\[
		\Bigl(2^{-j/2}\Bigr)^{1-q/4}
		\Bigl(2^{-j/2}\Bigr)^{q/4}
		\Bigl(2^{2j/q}\Bigr)^{q/4}
		=2^{-j/2}2^{j/2}=1.
		\]
		Therefore
		\[
		\norm{T_{j,\lambda}^{(2)}}_{L^2\times L^2\to L^1}
		\lesssim
		j^{1-\beta}(\lambda+1)2^{-\lambda C_{M,q}}
		\norm{\Omega}_{\cK_{1/2,\beta}}.
		\]
		Since \(q<4\), we may choose \(M\) sufficiently large that \(C_{M,q}>0\).
		The \(\lambda\)-sum therefore converges.
		
		Summing the \(l=1\) and \(l=2\) contributions over \(\lambda\ge0\) gives
		\[
		\norm{T_j}_{L^2\times L^2\to L^1}
		\lesssim
		j^{1-\beta}\norm{\Omega}_{\cK_{1/2,\beta}},
		\]
		as claimed.
	\end{proof}
	
	\subsection{Interpolation and summation}
	
	We combine the high-frequency endpoint estimate of
	Proposition~\ref{prop:L2-endpoint},
	\[
	\norm{T_j}_{L^2(\R)\times L^2(\R)\to L^1(\R)}
	\lesssim
	j^{1-\beta}\norm{\Omega}_{\cK_{1/2,\beta}}
	\qquad (j\ge2),
	\]
	with the following polynomial estimate for positive-scale rough bilinear
	pieces.
	
	\begin{lemma}\label{lem:poly-growth}
		Let \(1<r_1,r_2,r<\infty\) with
		\(1/r=1/r_1+1/r_2\), and let
		\(\Omega\in L^1(\Sph)\) have mean zero. Then, for every \(j\ge1\),
		\[
		\norm{T_j(f_1,f_2)}_{L^r(\R)}
		\lesssim
		j^2\norm{\Omega}_{L^1(\Sph)}
		\norm{f_1}_{L^{r_1}(\R)}
		\norm{f_2}_{L^{r_2}(\R)}.
		\]
		If, in addition, \(\Omega\in\cK_{1/2,\beta}(\Sph)\), then
		\[
		\norm{T_j}_{L^{r_1}\times L^{r_2}\to L^r}
		\lesssim_\beta
		j^2\norm{\Omega}_{\cK_{1/2,\beta}}.
		\]
	\end{lemma}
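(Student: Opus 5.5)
We follow the shifted-operator argument of \cite[proof of Proposition~4]{dosidis_multilinear_2024}; the only input needed from \(\Omega\) is its \(L^1(\Sph)\) norm. The second assertion then follows from the first by Lemma~\ref{lem:L1-from-K}, so we concentrate on the \(L^1\) estimate. Write
\[
T_j=\sum_{i\in\mathbb Z}T_{K_j^i},\qquad K_j^i=\Delta_{j-i}K^i,
\]
so that \(T_j\) is a sum over the dyadic kernel scales \(i\). Each piece has the form
\[
T_{K_j^i}(f_1,f_2)(x)=\int_{\R^2}K^i(y)\,\bigl(\text{frequency-localized version of }f_1(x-y_1)f_2(x-y_2)\bigr)\,dy .
\]
Passing to polar coordinates \(y=r\theta\), we bring the angular integral \(\int_{\Sph}|\Omega(\theta)|\,d\sigma(\theta)\) outside. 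For each fixed \(\theta\), what remains is a bilinear operator built from translates of \(f_1\) and \(f_2\) by \(r\theta_1\) and \(r\theta_2\) with \(r\simeq2^i\), composed with the Littlewood--Paley projection at frequency \(2^{j-i}\) in \(\R^2\).

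The key step is to bound this fixed-\(\theta\) operator, summed over \(i\), uniformly in \(\theta\) with a loss of at most \(j^2\). We split the two-dimensional projection \(\Delta_{j-i}\) into pieces where the frequency of \(\xi_1\) or of \(\xi_2\) is comparable to \(2^{j-i}\). Each such piece is a shifted Littlewood--Paley projection in one variable, of the form \(\phi_{i}(\cdot)*f(\cdot-r\theta_u)\) with shift \(r|\theta_u|\lesssim2^{i}=2^{j}\cdot2^{i-j}\), that is, shift parameter \(\lesssim 2^j\) relative to the frequency scale. We then use three standard facts.
\begin{itemize}
\item The shifted maximal function \(M^{(\sigma)}\) with shift \(\sigma\le2^j\) is bounded on \(L^s\) with norm \(\lesssim\log(2+\sigma)\lesssim j\).
\item The shifted square function obeys the same logarithmic bound.
\end{itemize}
Pairing one maximal function with one square function, and using Hölder together with Littlewood--Paley theory on the output (whose frequency is localized near \(2^{j-i}\) up to a bounded number of overlapping scales), gives a factor \(j\cdot j=j^2\). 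The integral \(\int_{1/2}^2\chi(r)\,dr/r\) contributes only a constant. The residual case, where both input frequencies are \(\lesssim 2^{j-i}\) but the output frequency is comparable to \(2^{j-i}\), is handled the same way, with the square function placed on the output via duality and with \(p>1\).

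The main obstacle is the bookkeeping that guarantees uniformity in \(\theta\). In particular, the logarithmic growth must come from the shift size \(2^j\) and not from \(1/|\theta_u|\); this works because the shifts are bounded by \(2^i\) irrespective of \(\theta\). We also must ensure that the vector-valued shifted estimates, of Fefferman--Stein type, are available; in dimension one they follow from the shifted maximal theorem by the usual Rademacher and extrapolation arguments.

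Once the bound \(\lesssim j^2\|f_1\|_{r_1}\|f_2\|_{r_2}\) holds uniformly in \(\theta\), Minkowski's inequality in \(\theta\) yields the factor \(\|\Omega\|_{L^1(\Sph)}\). Mean zero is not needed for \(j\ge1\), although it is harmless. Finally, Lemma~\ref{lem:L1-from-K} converts the \(L^1\) bound into the \(\cK_{1/2,\beta}\) bound.
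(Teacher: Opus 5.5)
Your overall mechanism matches the paper's. You write \(T_j\) as an average of shifted paraproduct-type forms, control each by the shifted maximal and square functions of Dosidis--Slav\'ikova with shifts \(\lesssim 2^j\) in units of the frequency scale (two shifted inputs, hence \(j\cdot j\)), extract \(\norm{\Omega}_{L^1(\Sph)}\) by Minkowski, and finish with Lemma~\ref{lem:L1-from-K}.

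\textbf{Gap: the radial projection does not factor.} The step that makes the shifted estimates applicable fails as written. You claim that splitting \(\Delta_{j-i}\) into pieces where \(|\xi_1|\) or \(|\xi_2|\) is comparable to \(2^{j-i}\) yields one-variable shifted Littlewood--Paley projections \(\phi_i*f(\cdot-r\theta_u)\). But the symbol \(\eta(2^{i-j}|(\xi_1,\xi_2)|)\) is radial in \(\R^2\), not a tensor product. Multiplying it by product cutoffs \(\widehat\phi_1(2^{i-j}\xi_1)\widehat\phi_2(2^{i-j}\xi_2)\) does not make it one. So for fixed \(\theta\) and \(r=2^is\), your operator is not a sum over \(i\) of products of one-variable shifted pieces.

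The missing step is to expand the non-product factor by Fourier inversion, \(\eta(2^{i-j}|\xi|)=\int_{\R^2}\psi(w)e^{2\pi i2^{i-j}w\cdot\xi}\,dw\), with \(\psi=(\eta(|\cdot|))^\vee\) even and Schwartz, leaving the product cutoffs in place. Each modulation is a tensor product of translations. The shift of the \(u\)-th input, in units of \(2^{i-j}\), then becomes \(w_u-2^js\theta_u\), and the total cost is
\[
\int_{\R^2}|\psi(w)|\prod_{u=1}^2\log\bigl(2+2^{j+1}+|w_u|\bigr)\,dw\lesssim j^2 .
\]

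With this added, your argument is the paper's proof with \(K_j^0=K^0*\psi_j\) unpacked. The paper applies Fourier inversion directly to the non-product factor \(\widehat{K_j^0}(2^k\xi)\), so its shifts \(2^jy_\ell\) range over all \(y\in\R^2\). The Schwartz tail of \(\psi_j\) is then absorbed by the weighted bound \(W_j=\int|K_j^0(y)|\prod_i\log(2+|y_i|)\,dy\lesssim\norm{\Omega}_{L^1(\Sph)}\). This is the ``cutoff compatibility'' the paper explicitly checks. By contrast, the uniformity in \(\theta\) that you flag as the main obstacle is immediate, since \(|2^js\theta_u|\le2^{j+1}\).

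\textbf{Frequency case analysis.} On a piece with \(|\xi_1|\simeq2^{j-i}\), the output frequency \(\xi_1+\xi_2\) is localized near \(2^{j-i}\) only when \(|\xi_2|\ll2^{j-i}\). If \(|\xi_1|\simeq|\xi_2|\simeq2^{j-i}\), the output frequency can be arbitrarily small. Littlewood--Paley theory on the output is then unavailable, and you must put shifted square functions on both inputs (Cauchy--Schwarz in \(i\), then H\"older).

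Your ``residual case'' is not an additional case, because on the annulus \(|\xi|\simeq2^{j-i}\) at least one input frequency is already \(\gtrsim2^{j-i}\). The paper organizes this cleanly. With \(\xi_3=-\xi_1-\xi_2\), at least two of \(\xi_1,\xi_2,\xi_3\) are comparable to \(2^{j-k}\). Square functions go on those two and a maximal function on the third. Since only \(f_1,f_2\) are shifted, the loss is \(j^2\) in every case.
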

	
	\begin{proof}
		Proposition~4 of \cite{dosidis_multilinear_2024} gives an estimate with loss
		\(C_\varepsilon2^{\varepsilon j}\).  Adapting its shifted maximal- and
		square-function argument to \(m=2\), \(n=1\), and the cutoffs used here yields
		the polynomial estimate needed below.  The two input-coordinate shifts
		produce two logarithmic factors, which lead to the loss \(j^2\).  We include
		the details to verify both the cutoff compatibility and this polynomial
		dependence.
		
		Let \(r_3=r'\), and for Schwartz functions set
		\[
		\Lambda_j(f_1,f_2,f_3)
		:=\int_\R T_j(f_1,f_2)(x)f_3(x)\,dx.
		\]
		By homogeneity,
		\[
		\widehat{K_j}(\xi_1,\xi_2)
		=\sum_{k\in\mathbb Z}
		\widehat{K_j^0}(2^k\xi_1,2^k\xi_2).
		\]
		The summand indexed by \(k\) is supported where
		\(|(\xi_1,\xi_2)|\simeq2^{j-k}\). Writing
		\(\xi_3=-\xi_1-\xi_2\), at least two among
		\(\xi_1,\xi_2,\xi_3\) then have magnitude comparable to
		\(2^{j-k}\). A finite smooth partition of unity reduces
		\(\Lambda_j\) to forms of the type
		\[
		\begin{aligned}
			\Lambda_{j,\boldsymbol\phi}
			=\sum_{k\in\mathbb Z}\int_{\R^2}
			&\widehat{K_j^0}(2^k\xi_1,2^k\xi_2)
			\prod_{\ell=1}^3
			\bigl[
			\widehat f_\ell(\xi_\ell)
			\widehat\phi_\ell(2^{k-j}\xi_\ell)
			\bigr] \,d\xi_1\,d\xi_2.
		\end{aligned}
		\]
		where the \(\widehat\phi_\ell\) are compactly supported and at least two
		of them are supported away from the origin. Only finitely many such forms
		are needed.
		
		For \(t>0\) and \(v\in\R\), write
		\[
		(\phi_\ell)_t^v(x):=t\phi_\ell(tx-v),
		\]
		and set \(y_3=0\). Fourier inversion and the change of variables used in
		the proof of Proposition~4 in \cite{dosidis_multilinear_2024} give
		\[
		\begin{aligned}
			\Lambda_{j,\boldsymbol\phi}
			=\int_{\R^2}K_j^0(y)
			\sum_{k\in\mathbb Z}\int_\R
			\prod_{\ell=1}^3
			\bigl[
			f_\ell*(\phi_\ell)_{2^{j-k}}^{2^jy_\ell}
			\bigr](x)\,dx\,dy.
		\end{aligned}
		\]
		Let \(a\ne b\) be two indices for which
		\(\widehat\phi_a\) and \(\widehat\phi_b\) are supported away from zero.
		For fixed \(y\), Cauchy--Schwarz in \(k\), followed by H\"older's
		inequality in \(x\), bounds the inner sum by
		\[
		\begin{aligned}
			&\prod_{\ell\notin\{a,b\}}
			\norm{\sup_k
				|f_\ell*(\phi_\ell)_{2^{j-k}}^{2^jy_\ell}|}_{L^{r_\ell}}
			\\
			&\qquad\times
			\prod_{\ell\in\{a,b\}}
			\norm{\left(\sum_k
				|f_\ell*(\phi_\ell)_{2^{j-k}}^{2^jy_\ell}|^2
				\right)^{1/2}}_{L^{r_\ell}}.
		\end{aligned}
		\]
		Here \(r_3=r'\). The shifted maximal and square-function estimates, in
		the form used in \cite[the proof of Proposition~4]{dosidis_multilinear_2024},
		show that the last display is at most
		\[
		C_{r_1,r_2}
		\prod_{i=1}^2\log(2+2^j|y_i|)
		\prod_{\ell=1}^3\norm{f_\ell}_{L^{r_\ell}}.
		\]
		The third shift is zero and contributes only an absolute constant. For
		\(j\ge1\),
		\[
		\log(2+2^j|y_i|)
		\lesssim j\log(2+|y_i|),
		\qquad i=1,2.
		\]
		Consequently,
		\begin{equation}\label{eq:poly-growth-reduction}
			|\Lambda_{j,\boldsymbol\phi}(f_1,f_2,f_3)|
			\lesssim
			j^2 W_j
			\prod_{\ell=1}^3\norm{f_\ell}_{L^{r_\ell}},
		\end{equation}
		where
		\[
		W_j:=\int_{\R^2}|K_j^0(y)|
		\prod_{i=1}^2\log(2+|y_i|)\,dy.
		\]
		
		We next verify that
		\(W_j\lesssim\norm{\Omega}_{L^1}\), uniformly for \(j\ge1\). Let
		\[
		\psi:=\bigl(\eta(|\cdot|)\bigr)^\vee,
		\qquad
		\psi_j(y):=2^{2j}\psi(2^jy).
		\]
		Then \(K_j^0=K^0*\psi_j\). Since \(\psi\) is a Schwartz function,
		\[
		|\psi(x)|
		\lesssim
		\sum_{\nu=0}^\infty
		2^{-4\nu}\one_{B(0,2^\nu)}(x),
		\]
		and therefore
		\[
		|\psi_j(x)|
		\lesssim
		\sum_{\nu=0}^\infty
		2^{2j-4\nu}\one_{B(0,2^{\nu-j})}(x).
		\]
		The kernel \(K^0\) is supported in a fixed annulus. Hence
		\(|K^0|*\one_{B(0,2^{\nu-j})}\) is supported in
		\(B(0,C2^\nu)\) for \(j\ge1\), and on this ball
		\[
		\prod_{i=1}^2\log(2+|y_i|)\lesssim(\nu+1)^2.
		\]
		It follows that
		\begin{align*}
			W_j
			&\lesssim
			\sum_{\nu=0}^\infty
			2^{2j-4\nu}(\nu+1)^2
			\norm{|K^0|*\one_{B(0,2^{\nu-j})}}_{L^1(\R^2)}
			\\
			&\lesssim
			\norm{K^0}_{L^1(\R^2)}
			\sum_{\nu=0}^\infty(\nu+1)^2 2^{-2\nu}
			\lesssim
			\norm{\Omega}_{L^1(\Sph)}.
		\end{align*}
		In the last step we used polar coordinates and the fixed annular support of
		\(\chi\), which give
		\[
		\norm{K^0}_{L^1(\R^2)}
		\lesssim\norm{\Omega}_{L^1(\Sph)}.
		\]
		Substituting this estimate into \eqref{eq:poly-growth-reduction}, summing
		the finitely many frequency-partition pieces, and applying duality proves
		\[
		\norm{T_j}_{L^{r_1}\times L^{r_2}\to L^r}
		\lesssim
		j^2\norm{\Omega}_{L^1(\Sph)}
		\]
		for all sufficiently large \(j\).  The finitely many remaining positive
		scales follow from the fixed-scale bilinear Coifman--Meyer theorem, after
		enlarging the constant. Finally, under the additional directional
		hypothesis, Lemma~\ref{lem:L1-from-K} gives
		\[
		\norm{\Omega}_{L^1(\Sph)}
		\lesssim_\beta
		\norm{\Omega}_{\cK_{1/2,\beta}},
		\]
		which proves the second assertion.
	\end{proof}
	
	We use the following strong-type consequence of Sagher's multilinear analytic
	interpolation theorem \cite{Sagher1969}; this restricted form suffices here.
	
	\begin{lemma}\label{lem:sagher}
		Let \(S\) be a bilinear operator. Assume that, for \(\nu=0,1\),
		\[
		\norm{S(f_1,f_2)}_{L^{s_\nu}}
		\le A_\nu
		\norm{f_1}_{L^{r_{1,\nu}}}
		\norm{f_2}_{L^{r_{2,\nu}}},
		\]
		where all displayed spaces are Banach spaces.  For
		\(0<\vartheta<1\), define
		\[
		\frac1{r_u}
		=\frac{1-\vartheta}{r_{u,0}}+\frac{\vartheta}{r_{u,1}}
		\quad(u=1,2),
		\qquad
		\frac1s=\frac{1-\vartheta}{s_0}+\frac{\vartheta}{s_1}.
		\]
		Then
		\[
		\norm{S(f_1,f_2)}_{L^s}
		\le C A_0^{1-\vartheta}A_1^\vartheta
		\norm{f_1}_{L^{r_1}}\norm{f_2}_{L^{r_2}}.
		\]
	\end{lemma}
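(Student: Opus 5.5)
The statement to prove is Lemma~\ref{lem:sagher}: a strong-type bilinear interpolation between two Lebesgue-space endpoints, with the geometric-mean constant $A_0^{1-\vartheta}A_1^\vartheta$. I would prove it by the bilinear complex interpolation method (Riesz--Thorin type), rather than appealing to the full strength of Sagher's theorem.

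\emph{Step 1: reduce to simple functions and dualize.} By density and boundedness at both endpoints, it suffices to take $f_1,f_2$ simple with $\norm{f_u}_{L^{r_u}}=1$. Since $s\ge1$ (the endpoint spaces are Banach), it then suffices to bound $\bigl|\int S(f_1,f_2)h\bigr|$ over simple $h$ with $\norm h_{L^{s'}}=1$. If some $r_u=\infty$, the corresponding input is left unchanged below.

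\emph{Step 2: build the analytic family.} Write $f_u=|f_u|e^{i\arg f_u}$ and set
\[
f_{u,z}=|f_u|^{r_u\left(\frac{1-z}{r_{u,0}}+\frac{z}{r_{u,1}}\right)}e^{i\arg f_u},
\]
and define $h_z$ analogously with the exponents $s'$, $s_0'$, $s_1'$. Put
\[
\Phi(z)=\int S(f_{1,z},f_{2,z})\,h_z\,dx .
\]
Because everything is simple, bilinearity expands $\Phi$ as a finite sum of terms $e^{cz}\int S(\one_{E},\one_{F})\one_G$. Hence $\Phi$ is entire and bounded on the strip $0\le\operatorname{Re}z\le1$.

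\emph{Step 3: control the boundary lines.} On $\operatorname{Re}z=\nu$ one has $\norm{f_{u,z}}_{L^{r_{u,\nu}}}=1$ and $\norm{h_z}_{L^{s_\nu'}}=1$. The hypothesis and H\"older's inequality then give $|\Phi(z)|\le A_\nu$.

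\emph{Step 4: conclude.} The three-lines lemma gives $|\Phi(\vartheta)|\le A_0^{1-\vartheta}A_1^\vartheta$. Since $f_{u,\vartheta}=f_u$ and $h_\vartheta=h$, this yields the claim, in fact with $C=1$.

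The only delicate points are the degenerate exponents ($r_{u,\nu}=\infty$, or $s_\nu=1$ so that $s_\nu'=\infty$), handled by the usual conventions, and the density step. I expect the latter to be the main obstacle. When some $r_u=\infty$, simple functions are not dense in $L^{r_u}$, so the extension must instead be routed through the Schwartz-input definition of $S$ used in the application.
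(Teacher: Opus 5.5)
Your argument is correct, but it takes a different route from the paper. The paper gives no proof of this lemma. It invokes Sagher's multilinear analytic interpolation theorem \cite{Sagher1969} and records the displayed strong-type statement as a special case. You instead give the standard bilinear Riesz--Thorin argument: analytic families $f_{u,z}$ and $h_z$ built from simple functions, boundedness of $\Phi$ on the closed strip via its finite exponential expansion, H\"older on the two boundary lines, and the three-lines lemma. Your route is self-contained and gives the sharp constant $C=1$. The citation buys generality (analytic families of operators, Lorentz-type endpoints) that the application never uses.

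Your worry about density is overstated. If $r_u=\infty$, then necessarily $r_{u,0}=r_{u,1}=\infty$, so $f_u$ is simply held fixed and never needs to be simple. Consider the only application, Step~3 of the proof of Theorem~\ref{thm:Klog-main}. There every input exponent is finite, and the only degenerate exponent is the output $s_1=1$. Then $s_1'=\infty$ and $|h_z|=\one_{\supp h}$ on $\operatorname{Re}z=1$, exactly as your convention gives.

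What does deserve a sentence is that $T_j$ is initially defined only on Schwartz inputs. Approximate each simple input by Schwartz functions simultaneously in $L^2$ and in the corresponding endpoint-$0$ space. Passing to almost-everywhere convergent subsequences shows that the two density extensions of $T_j$ (into $L^1$ and into $L^r$) agree on simple functions. Hence $S=T_j$ is unambiguously defined there and satisfies both endpoint bounds, and your argument then applies verbatim.
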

	
	\begin{proof}[Proof of Theorem~\ref{thm:Klog-main}]
		\emph{Step 1: low and middle scales.}
		By Proposition~\ref{prop:negative-scales}, the series \(\sum_{j\le0}T_j\)
		converges in operator norm on
		\(L^{p_1}(\R)\times L^{p_2}(\R)\to L^p(\R)\). Lemma~\ref{lem:poly-growth}
		controls the single middle scale \(j=1\), which may be absorbed into the final
		constant. It remains to sum \(T_j\) over \(j\ge2\).
		
		\smallskip
		\noindent\emph{Step 2: choice of auxiliary exponents.}
		Define the critical interpolation parameter
		\begin{equation}\label{eq:theta-star}
			\theta_* := \min\left\{ \frac{2}{p_1}, \frac{2}{p_1'}, \frac{2}{p_2}, \frac{2}{p_2'} \right\} = \frac{2}{\max\{p_1, p_1', p_2, p_2'\}}.
		\end{equation}
		Fix \(\theta\in(0,\theta_*)\), and define \(r_1,r_2,r\) by
		\begin{equation}\label{eq:aux-exponents}
			\frac1{p_i}=(1-\theta)\frac1{r_i}+\theta\frac12
			\quad (i=1,2),
			\qquad
			\frac1p=(1-\theta)\frac1r+\theta.
		\end{equation}
		Equivalently,
		\begin{equation}\label{eq:aux-explicit}
			r_i=\frac{2(1-\theta)p_i}{2-\theta p_i},
			\qquad
			r=\frac{(1-\theta)p}{1-\theta p}.
		\end{equation}
		We verify that \(1<r_1,r_2,r<\infty\). First,
		\[
		r_i<\infty \iff \theta<\frac2{p_i},
		\qquad
		r_i>1 \iff \theta<\frac2{p_i'}.
		\]
		Both conditions follow because \(\theta<\theta_*\) and \(\theta_*\) is defined
		by \eqref{eq:theta-star}.
		Next,
		\[
		r<\infty
		\iff
		\theta<\frac1p.
		\]
		Moreover,
		\[
		\theta<\theta_*
		\le
		\min\Bigl\{\frac2{p_1},\frac2{p_2}\Bigr\}
		\le
		\frac1{p_1}+\frac1{p_2}
		=
		\frac1p,
		\]
		so \(r<\infty\). Finally,
		\[
		\frac1r=\frac{1/p-\theta}{1-\theta}<1
		\]
		because \(p>1\), and hence \(r>1\).
		
		\smallskip
		\noindent\emph{Step 3: interpolation for each fixed \(j\ge2\).}
		Apply Lemma~\ref{lem:sagher} to \(S=T_j\) with interpolation parameter
		\(\vartheta=\theta\) and the following endpoints:
		\begin{itemize}
			\item endpoint \(0\): the polynomial growth estimate of
			Lemma~\ref{lem:poly-growth} on
			\[
			L^{r_1}(\R)\times L^{r_2}(\R)\to L^r(\R);
			\]
			\item endpoint \(1\): the \(L^2\times L^2\to L^1\) estimate of
			Proposition~\ref{prop:L2-endpoint}, viewed as
			\[
			L^2(\R)\times L^2(\R)\to L^1(\R).
			\]
		\end{itemize}
		The interpolation identities are precisely \eqref{eq:aux-exponents}. Hence
		\begin{equation}\label{eq:interp-j}
			\norm{T_j(f_1,f_2)}_{L^p(\R)}
			\lesssim
			(j^2)^{1-\theta}(j^{1-\beta})^{\theta}
			\norm{\Omega}_{\cK_{1/2,\beta}}
			\norm{f_1}_{L^{p_1}(\R)}\norm{f_2}_{L^{p_2}(\R)}.
		\end{equation}
		Since
		\[
		(j^2)^{1-\theta}(j^{1-\beta})^\theta
		=
		j^{2(1-\theta)+\theta(1-\beta)}
		=
		j^{2-\theta-\beta\theta},
		\]
		we obtain
		\begin{equation}\label{eq:interp-j-simplified}
			\norm{T_j}_{L^{p_1}\times L^{p_2}\to L^p}
			\lesssim
			j^{2-\theta-\beta\theta}
			\norm{\Omega}_{\cK_{1/2,\beta}}.
		\end{equation}
		
		\smallskip
		\noindent\emph{Step 4: summation over \(j\ge2\).}
		The series \(\sum_{j\ge2}T_j\) converges in operator norm provided that
		\[
		\sum_{j\ge2} j^{2-\theta-\beta\theta}<\infty.
		\]
		This condition is equivalent to
		\begin{equation}\label{eq:beta-condition-final}
			2-\theta-\beta\theta<-1.
		\end{equation}
		Equivalently,
		\begin{equation}\label{eq:beta-condition-theta}
			\beta>\frac{3-\theta}{\theta}=\frac3\theta-1.
		\end{equation}
		Thus, for each fixed admissible interpolation parameter
		\(\theta\in(0,\theta_*)\), the present argument requires
		\[
		\beta>\frac3\theta-1.
		\]
		We do not claim that this threshold is sharp.
		
		Finally, the function \(\theta\mapsto \frac3\theta-1\) is strictly decreasing
		on \((0,1)\).  The least restrictive condition furnished by this argument is
		therefore the limiting value
		\[
		\lim_{\theta\uparrow\theta_*}\Bigl(\frac3\theta-1\Bigr)
		=
		\frac3{\theta_*}-1.
		\]
		Consequently, if
		\begin{equation}\label{eq:beta-threshold-explicit}
			\beta>\frac3{\theta_*}-1,
		\end{equation}
		then \(\theta\) may be chosen sufficiently close to \(\theta_*\) that
		\eqref{eq:beta-condition-final} holds.  Summing
		\eqref{eq:interp-j-simplified} over \(j\ge2\) and invoking Step~1 gives a
		convergent operator series.  By Lemma~\ref{lem:dyadic-reconstruction}, this
		series represents the radial principal-value operator \(T_\Omega\). As the
		admissible exponents approach the formal
		symmetric boundary \(p_1=p_2=2\) from within the region \(p>1\), one has
		\(\theta_*\uparrow1\), and the threshold tends to \(\beta>2\). The boundary
		itself corresponds to \(p=1\) and is not covered by the theorem.
	\end{proof}
	\section{Comparison of the kernel classes}
	
	We give two explicit counterexamples showing that, for every \(\alpha,\beta>0\),
	the Orlicz class \(L(\log L)^\alpha(\Sph)\) and the directional class
	\(\cK_{1/2,\beta}(\Sph)\) are incomparable. The first establishes the
	non-inclusion of the Orlicz class in the directional class; the second gives a
	self-contained construction for the reverse non-inclusion.
	
	For the first non-inclusion, we concentrate the kernel near a direction where
	the directional weight becomes singular.  The singularity is chosen mild
	enough to preserve \(L(\log L)^\alpha\)-integrability, but strong enough that
	the corresponding directional integral diverges.  Subtracting the mean then
	gives the required cancellation without changing either property.
	
	\begin{proposition}\label{prop:Orlicz-not-Klog}
		Let \(\alpha>0\) and \(\beta>0\). Then
		\[
		L(\log L)^\alpha(\Sph)\not\subset \cK_{1/2,\beta}(\Sph).
		\]
	\end{proposition}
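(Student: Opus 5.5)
We construct an explicit mean-zero kernel with a power-type singularity at one point. Parametrize \(\Sph\) by \(\theta(\varphi)=(\cos\varphi,\sin\varphi)\) and put
\[
g(\varphi)=\frac{\one_{\{|\varphi|\le e^{-2}\}}}{|\varphi|\,\bigl(\log\frac1{|\varphi|}\bigr)^{\gamma}},
\]
with an exponent \(\gamma>1\) to be fixed below. Set \(\Omega=g-\int g\,d\sigma\). The constant is harmless for both properties: constants lie in \(L(\log L)^\alpha\), and constants lie in \(\cK_{1/2,\beta}\) because the weight \(|\theta\cdot\xi|^{-1/2}(\log\frac1{|\theta\cdot\xi|})^\beta\) is integrable uniformly in \(\xi\), as in the proof of Lemma~\ref{lem:L1-from-K}. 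It therefore suffices to show that \(g\in L(\log L)^\alpha\) while the \(\cK_{1/2,\beta}\) supremum of \(g\) is infinite.

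\emph{Orlicz side.} For small \(|\varphi|\), \(\log(e+g)\simeq\log\frac1{|\varphi|}\). Hence
\[
\int g\log^\alpha(e+g)\simeq\int_0^{e^{-2}}\frac{d\varphi}{\varphi(\log\frac1\varphi)^{\gamma-\alpha}},
\]
which is finite exactly when \(\gamma>1+\alpha\).

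\emph{Directional side.} Take \(\xi=(0,1)\), so \(|\theta(\varphi)\cdot\xi|=|\sin\varphi|\simeq|\varphi|\) near \(\varphi=0\). The directional integral is then comparable to
\[
\int_0^{e^{-2}}\frac{d\varphi}{\varphi^{3/2}(\log\frac1\varphi)^{\gamma-\beta}}=\infty,
\]
and this diverges for every \(\gamma\), because the power \(\varphi^{-3/2}\) beats any logarithm. Choosing \(\gamma=2+\alpha\) gives \(\Omega\in L(\log L)^\alpha\) with \(\norm{\Omega}_{\cK_{1/2,\beta}}=\infty\).

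Note that one could even take \(g=|\varphi|^{-s}\) with any \(1/2\le s<1\), which lies in every \(L(\log L)^\alpha\) and still makes the directional integral diverge.

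The only step needing care is the claim that subtracting the mean does not restore finiteness in \(\cK_{1/2,\beta}\). This follows from the triangle inequality in the directional integral, since the constant contributes a finite amount; also \(\Omega\ge g-C\) pointwise. The rest is elementary one-variable calculus.
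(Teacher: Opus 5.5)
Your proof is correct and follows essentially the paper's route: you subtract the mean from a kernel with a point singularity at a zero of \(\theta\cdot\xi\) for a fixed test direction. That singularity is mild enough for \(L(\log L)^\alpha\) but makes the weighted integral at that direction diverge. The differences are only in the details. The paper uses the borderline profile \(t^{-1/2}(\log\frac1t)^{-\beta-1}\), with \(t=|\theta\cdot\xi_0|\), at both zeros of \(\theta\cdot\xi_0\), so its directional divergence is the delicate side, whereas your \(|\varphi|^{-1}(\log\frac1{|\varphi|})^{-\gamma}\) makes the Orlicz side the tight one. The paper absorbs the mean via the pointwise bound \(|\Omega|\ge\Omega_0/2\) near the singularity, while you use the finiteness of the directional integral of constants.
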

	
	\begin{proof}
		Fix \(\xi_0=(1,0)\in\Sph\) and define the nonnegative Borel function
		\begin{equation}\label{eq:Orlicz-not-Klog-example}
			\Omega_0(\theta)
			:=
			\begin{cases}
				\displaystyle
				\frac{1}
				{|\theta\cdot\xi_0|^{1/2}
					\bigl(\log \frac1{|\theta\cdot\xi_0|}\bigr)^{\beta+1}},
				&0<|\theta\cdot\xi_0|<e^{-2},\\[6pt]
				0,&\text{otherwise}.
			\end{cases}
		\end{equation}
		Thus the function is defined to be zero on the set
		\(\{\theta:\theta\cdot\xi_0=0\}\).
		The local model
		\(t^{-1/2}(\log(1/t))^{-\beta-1}\) is integrable at zero, so
		\(\Omega_0\in L^1(\Sph)\). Set
		\[
		\mu_0:=\int_{\Sph}\Omega_0\,d\sigma,
		\qquad
		\Omega:=\Omega_0-\mu_0.
		\]
		Then \(\Omega\) is Borel measurable, antipodally even, and has mean zero. We
		claim that
		\[
		\Omega\in L(\log L)^\alpha(\Sph)
		\qquad\text{but}\qquad
		\Omega\notin \cK_{1/2,\beta}(\Sph).
		\]
		
		We first verify that \(\Omega\notin \cK_{1/2,\beta}(\Sph)\).
		Since
		\[
		\frac1{t^{1/2}(\log(1/t))^{\beta+1}}\longrightarrow\infty
		\qquad (t\downarrow0),
		\]
		there exists \(0<\varepsilon_0<e^{-2}\) such that
		\(\Omega_0(\theta)\ge2\mu_0\) whenever
		\(0<|\theta\cdot\xi_0|<\varepsilon_0\). On this set,
		\(|\Omega(\theta)|\ge\Omega_0(\theta)/2\). Testing the definition of
		\(\cK_{1/2,\beta}\) at \(\xi=\xi_0\) therefore gives
		\begin{equation}\label{eq:Klog-lower}
			\int_{\Sph}
			\frac{|\Omega(\theta)|}{|\theta\cdot\xi_0|^{1/2}}
			\Bigl(\log\frac1{|\theta\cdot\xi_0|}\Bigr)^\beta
			\,d\sigma(\theta)
			\ge
			\frac12\int_{\{0<|\theta\cdot\xi_0|<\varepsilon_0\}}
			\frac{d\sigma(\theta)}{|\theta\cdot\xi_0|
				\log\frac1{|\theta\cdot\xi_0|}}.
		\end{equation}
		
		To analyze the integral on the right, parametrize \(\Sph\) by
		\(\theta(\varphi)=(\cos\varphi,\sin\varphi)\), \(\varphi\in[0,2\pi)\).
		Under our normalization, \(d\sigma(\theta)=d\varphi/(2\pi)\), and
		\[
		|\theta(\varphi)\cdot\xi_0|=|\cos\varphi|.
		\]
		Near \(\varphi=\frac\pi2\) one has \(|\cos\varphi|\sim|\varphi-\frac\pi2|\), and
		the same holds near \(\varphi=\frac{3\pi}{2}\). Therefore there exist constants
		\(c_1,c_2>0\) such that
		\[
		c_1|\varphi-\tfrac\pi2|\le |\cos\varphi|\le c_2|\varphi-\tfrac\pi2|
		\]
		whenever \(|\varphi-\frac\pi2|\) is sufficiently small, and similarly near
		\(\frac{3\pi}{2}\). It follows from \eqref{eq:Klog-lower} that
		\[
		\int_{\Sph}
		\frac{|\Omega(\theta)|}{|\theta\cdot\xi_0|^{1/2}}
		\Bigl(\log\frac1{|\theta\cdot\xi_0|}\Bigr)^\beta
		\,d\sigma(\theta)
		\gtrsim
		\int_0^\varepsilon \frac{dt}{t\log(1/t)}
		=+\infty
		\]
		for some sufficiently small \(\varepsilon>0\). Thus
		\(\Omega\notin \cK_{1/2,\beta}(\Sph)\).
		
		It remains to prove that \(\Omega\in L(\log L)^\alpha(\Sph)\).
		We begin with \(\Omega_0\). Since its support is contained in
		\(\{|\theta\cdot\xi_0|<e^{-2}\}\), it suffices to check integrability near the
		singular set \(\{\theta:\theta\cdot\xi_0=0\}\). Using the same parametrization
		and the equivalence \(|\cos\varphi|\sim |\varphi-\frac\pi2|\) near
		\(\frac\pi2\), as well as its analogue near \(\frac{3\pi}{2}\), it suffices to
		prove that
		\begin{equation}\label{eq:Orlicz-reduced}
			\int_0^\varepsilon
			\frac{1}{t^{1/2}(\log(1/t))^{\beta+1}}
			\Biggl(
			\log\Bigl(
			e+\frac{1}{t^{1/2}(\log(1/t))^{\beta+1}}
			\Bigr)
			\Biggr)^\alpha
			\,dt
			<\infty
		\end{equation}
		for some sufficiently small \(\varepsilon>0\).
		
		For \(0<t<e^{-2}\) we have \(\log(1/t)\ge 2\), and therefore
		\[
		\frac{1}{t^{1/2}(\log(1/t))^{\beta+1}}
		\le t^{-1/2}.
		\]
		Since \(t^{-1/2}\ge 1\) for \(0<t<1\), we obtain
		\[
		\log\Bigl(
		e+\frac{1}{t^{1/2}(\log(1/t))^{\beta+1}}
		\Bigr)
		\le
		\log(e+t^{-1/2})
		\lesssim \log(1/t).
		\]
		Consequently, the integrand in \eqref{eq:Orlicz-reduced} is bounded by
		\[
		\frac{(\log(1/t))^\alpha}{t^{1/2}(\log(1/t))^{\beta+1}}
		=
		t^{-1/2}(\log(1/t))^{\alpha-\beta-1}.
		\]
		Since
		\[
		\int_0^\varepsilon t^{-1/2}(\log(1/t))^{\alpha-\beta-1}\,dt<\infty,
		\]
		it follows that \eqref{eq:Orlicz-reduced} holds. Hence
		\(\Omega_0\in L(\log L)^\alpha(\Sph)\). Since \(\sigma(\Sph)<\infty\),
		constants belong to \(L(\log L)^\alpha(\Sph)\); moreover, this Orlicz space is
		a vector space. Hence
		\(\Omega=\Omega_0-\mu_0\in L(\log L)^\alpha(\Sph)\).
		
		Thus \(\Omega\) has mean zero, belongs to
		\(L(\log L)^\alpha(\Sph)\), and does not belong to
		\(\cK_{1/2,\beta}(\Sph)\).
	\end{proof}
	
	The reverse non-inclusion requires a different construction.  A single
	concentrated spike would be detected by some direction in the supremum defining
	\(\cK_{1/2,\beta}\), so we distribute many small spikes almost uniformly around
	the circle.  Their locations keep every directional integral bounded, while
	their heights and total masses are chosen so that the Orlicz modular diverges.
	After subtracting the mean, the resulting kernel remains in the directional
	class but lies outside \(L(\log L)^\alpha\).
	
	\begin{proposition}\label{prop:Klog-not-Orlicz}
		Let \(\alpha>0\) and \(\beta>0\). Then
		\[
		\cK_{1/2,\beta}(\Sph)\not\subset
		L(\log L)^\alpha(\Sph).
		\]
	\end{proposition}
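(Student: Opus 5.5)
We construct $\Omega_0=\sum_{n\ge n_0} h_n\one_{S_n}$, where each $S_n$ is a union of $N_n$ arcs of length $\ell_n$. The $N_n$ arc centres are spread almost uniformly around the circle, say at the points $2\pi k/N_n$, and they are arranged antipodally symmetric. The parameters are chosen as follows. The total mass of level $n$ is $\mu_n:=h_nN_n\ell_n$, and we want $\sum_n\mu_n<\infty$, so that $\Omega_0\in L^1$. At the same time we want the Orlicz modular
\[
\sum_n \mu_n\log^\alpha(e+h_n)
\]
to diverge; since $\lambda$ only rescales $h_n$, the same divergence then holds for every dilate $\Omega_0/\lambda$. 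A concrete attempt is $\mu_n=n^{-2}$ together with $h_n=\exp(n^{3/\alpha})$, so that $\mu_n\log^\alpha h_n\sim n$. Then $\Omega:=\Omega_0-\int\Omega_0\,d\sigma$ has mean zero. Subtracting a constant does not change membership in $L(\log L)^\alpha$, because constants belong to the space. For $\cK_{1/2,\beta}$, constants belong since the weight is integrable (this is the computation of $c_\beta$ in Lemma~\ref{lem:L1-from-K}), and the sublinearity of the directional norm then shows $\Omega\in\cK_{1/2,\beta}$ as soon as $\Omega_0$ is. We also choose the supports $S_n$ disjoint, or at least with negligible overlaps, so that $\Omega_0\ge h_n$ on $S_n$ and the modular lower bound holds.

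The key step is a uniform bound for each level $n$, which we then sum. Fix $\xi\in\Sph$ and write $w(u)=u^{-1/2}(\log(1/u))^\beta$ with $u=|\theta\cdot\xi|$; near each of the two zeros of $\theta\cdot\xi$, $u$ is comparable to the angular distance. Since the arcs are equally spaced with gaps of order $1/N_n$, at most $O(1)$ arcs lie within distance $2/N_n$ of a zero of $\theta\cdot\xi$. We split the level-$n$ integral $h_n\int_{S_n}w\,d\sigma$ into two parts.

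\emph{Far arcs.} These are the arcs at distance $\gtrsim 1/N_n$ from the zeros. Their contribution is a Riemann sum of $w$ with mesh $1/N_n$, weighted by $h_n\ell_n$. It is therefore bounded by
\[
C h_n\ell_n N_n\int_{\T} w\lesssim\mu_n .
\]
This uses the monotonicity of $w$ near $0$, which lets us compare each arc with the neighbouring gap.

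\emph{Near arcs.} There are $O(1)$ of them, each contributing at most
\[
h_n\int_0^{\ell_n}w\lesssim h_n\ell_n^{1/2}(\log(1/\ell_n))^\beta .
\]
We must make this summable. Writing $\ell_n=\mu_n/(h_nN_n)$, we need $h_n\ell_n^{1/2}=(\mu_nh_n)^{1/2}N_n^{-1/2}$, times the log factor, to be summable. We achieve this by taking $N_n$ enormous, e.g. $N_n=h_n^2e^{n}$. This makes $\ell_n$ tiny, but the log factor is only $\log(1/\ell_n)\lesssim\log h_n+n$, which is polynomial in $n$ and killed by $N_n^{-1/2}$. With these choices $\Omega_0\in\cK_{1/2,\beta}$ uniformly in $\xi$, while the modular diverges, so $\Omega_0\notin L(\log L)^\alpha$.

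The main obstacle is the far-arc Riemann-sum comparison. We handle it by bounding $\int_{\text{arc}}w\le\ell_n\sup_{\text{arc}}w$ and then comparing $\sup_{\text{arc}}w$ with the average of $w$ over the adjacent gap of length $\sim1/N_n$. This is valid because, for arcs at distance at least one gap from the zero, $w$ varies by a bounded factor across neighbouring gaps; the requirement $\ell_n\le 1/N_n$ needed here holds automatically. The antipodal symmetry and the two zeros of $\theta\cdot\xi$ only double the constants.
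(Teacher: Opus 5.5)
Your argument is correct and is essentially the paper's construction: equally spaced spikes at each level, a split into the $O(1)$ arcs near the zeros of $\theta\cdot\xi$ (made negligible by taking $N_n$ huge) and the far arcs (controlled by the level's total mass), then subtracting the mean. The one real difference is that you bound the critical weight $u^{-1/2}(\log\frac1u)^\beta$ directly via its monotonicity, whereas the paper proves the stronger membership $\Omega_0\in\cK_a$ for a fixed $a\in(\frac12,1)$ and then uses $\cK_a\subset\cK_{1/2,\beta}$.

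Your unproved remark about disjointness of the $S_n$ is not needed (and fails literally for centres $2\pi k/N_n$, since every level has an arc centred at $0$): $\Phi_\alpha(t)/t=\log^\alpha(e+t)$ is nondecreasing, so $\Phi_\alpha$ is superadditive and $\Phi_\alpha(\Omega_0/\lambda)\ge\sum_n\Phi_\alpha(h_n/\lambda)\one_{S_n}$ pointwise. This gives the modular divergence with no overlap control.
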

	
	\begin{proof}
		Fix \(a\) with \(\frac12<a<1\).  We first construct a nonnegative function in
		\(\cK_a(\Sph)\) that does not belong to \(L(\log L)^\alpha(\Sph)\), and then
		subtract its mean. It will then suffice to use the embedding
		\(\cK_a(\Sph)\subset \cK_{1/2,\beta}(\Sph)\).
		
		Identify \(\Sph\) with \([0,2\pi)\) equipped with its normalized angular
		measure.  For each integer \(k\ge1\), set
		\[
		h_k:=4^k,\qquad w_k:=4^{-k}k^{-\alpha-1},
		\]
		and choose
		\[
		N_k:=\left\lceil
		w_k\,4^{\frac{k}{1-a}}k^{\frac{2}{1-a}}
		\right\rceil.
		\]
		Partition \(\Sph\) into \(N_k\) arcs of equal normalized measure \(1/N_k\). At
		the center of each arc, choose a closed subinterval \(I_{k,j}\) of normalized
		measure
		\[
		\sigma(I_{k,j})=\delta_k:=\frac{w_k}{N_k},
		\qquad 1\le j\le N_k.
		\]
		Let
		\[
		E_k:=\bigcup_{j=1}^{N_k}I_{k,j},
		\qquad
		\Omega_0(\theta):=\sum_{k=1}^\infty h_k\one_{E_k}(\theta).
		\]
		Each \(E_k\) is Borel, and hence the nonnegative function \(\Omega_0\) is
		Borel measurable. Moreover, \(\sigma(E_k)=w_k\) and, by Tonelli's theorem,
		\[
		\int_{\Sph}\Omega_0(\theta)\,d\sigma(\theta)
		=\sum_{k=1}^\infty h_k\sigma(E_k)
		=\sum_{k=1}^\infty k^{-\alpha-1}<\infty.
		\]
		Thus \(\Omega_0\) is finite almost everywhere and belongs to \(L^1(\Sph)\).
		We redefine it to be zero on the Borel null set where the displayed series
		is infinite, and set
		\[
		\mu_0:=\int_{\Sph}\Omega_0\,d\sigma,
		\qquad
		\Omega:=\Omega_0-\mu_0.
		\]
		Then \(\Omega\) is Borel measurable and has mean zero.
		
		We first show that \(\Omega_0\notin L(\log L)^\alpha(\Sph)\). Put
		\[
		A_k:=E_k\setminus\bigcup_{m>k}E_m.
		\]
		The sets \(A_k\) are pairwise disjoint, and
		\[
		\sigma(A_k)\ge \sigma(E_k)-\sum_{m>k}\sigma(E_m)
		=w_k-\sum_{m>k}w_m.
		\]
		Since \(w_m=4^{-m}m^{-\alpha-1}\),
		\[
		\sum_{m>k}w_m
		\le (k+1)^{-\alpha-1}\sum_{m=k+1}^\infty 4^{-m}
		=\frac13 (k+1)^{-\alpha-1}4^{-k}
		\le \frac13 w_k.
		\]
		Hence \(\sigma(A_k)\ge \frac23w_k\). Almost everywhere on \(A_k\) one has
		\(\Omega_0(\theta)\ge h_k\), and therefore
		\[
		\begin{aligned}
			\int_{\Sph}\Omega_0(\theta)
			\bigl(\log(e+\Omega_0(\theta))\bigr)^\alpha\,d\sigma(\theta)
			&\ge
			\sum_{k=1}^\infty h_k\bigl(\log(e+h_k)\bigr)^\alpha\sigma(A_k)\\
			&\gtrsim
			\sum_{k=1}^\infty 4^k k^\alpha\,4^{-k}k^{-\alpha-1}
			=\sum_{k=1}^\infty \frac1k
			=\infty.
		\end{aligned}
		\]
		Moreover, the divergence persists at every Luxemburg scale. For each
		\(\lambda>0\), choose \(k_\lambda\) so that, for every
		\(k\ge k_\lambda\),
		\(\log(e+h_k/\lambda)\gtrsim_\lambda k\), and hence
		\[
		\int_{\Sph}\Phi_\alpha\!\left(\frac{\Omega_0}{\lambda}\right)d\sigma
		\ge
		\sum_{k\ge k_\lambda}
		\frac{h_k}{\lambda}
		\bigl(\log(e+h_k/\lambda)\bigr)^\alpha\sigma(A_k)
		=\infty.
		\]
		Thus \(\Omega_0\notin L(\log L)^\alpha(\Sph)\). Since constants belong to
		\(L(\log L)^\alpha(\Sph)\) and this Orlicz space is a vector space, the
		identity \(\Omega_0=\Omega+\mu_0\) implies that
		\(\Omega\notin L(\log L)^\alpha(\Sph)\) as well.
		
		It remains to verify the fractional directional condition. For fixed
		\(\xi\in\Sph\), define
		\[
		I_k(\xi):=\int_{E_k}\frac{d\sigma(\theta)}{|\theta\cdot\xi|^a}.
		\]
		Let \(P_\xi=\{\xi^\perp,-\xi^\perp\}\) denote the two zeros of
		\(\theta\mapsto\theta\cdot\xi\) on \(\Sph\). Call \(I_{k,j}\) bad if its
		ambient partition arc meets \(P_\xi\). Since each point of \(P_\xi\) belongs to
		at most two adjacent partition arcs, there are at most four bad intervals.
		Their total contribution is bounded by
		\[
		\sum_{I_{k,j}\,{\rm bad}}
		\int_{I_{k,j}}|\theta\cdot\xi|^{-a}\,d\sigma(\theta)
		\lesssim_a \delta_k^{1-a},
		\]
		since, near either zero, \(|\theta\cdot\xi|\) is comparable to the angular
		distance to that zero.
		
		Group the remaining good intervals according to their grid distance \(m\) from
		\(P_\xi\). For each fixed \(m\), the number of such intervals is uniformly
		bounded, and on each of them
		\(|\theta\cdot\xi|\gtrsim m/N_k\). Hence
		\[
		\sum_{I_{k,j}\,{\rm good}}
		\int_{I_{k,j}}|\theta\cdot\xi|^{-a}\,d\sigma(\theta)
		\lesssim
		\delta_k N_k^a
		\sum_{m=1}^{N_k}m^{-a}
		\lesssim_a \delta_kN_k=w_k,
		\]
		since \(0<a<1\). Consequently
		\[
		\sup_{\xi\in\Sph} I_k(\xi)
		\lesssim_a \delta_k^{1-a}+w_k.
		\]
		Using \(N_k\ge w_k4^{k/(1-a)}k^{2/(1-a)}\), we have
		\[
		\delta_k=\frac{w_k}{N_k}
		\le 4^{-\frac{k}{1-a}}k^{-\frac{2}{1-a}},
		\qquad
		\delta_k^{1-a}\le 4^{-k}k^{-2}.
		\]
		Therefore
		\[
		\begin{aligned}
			\sup_{\xi\in\Sph}
			\int_{\Sph}\frac{\Omega_0(\theta)}{|\theta\cdot\xi|^a}
			\,d\sigma(\theta)
			&\le
			\sum_{k=1}^\infty h_k\sup_{\xi\in\Sph}I_k(\xi)\\
			&\lesssim_a
			\sum_{k=1}^\infty 4^k(4^{-k}k^{-2}+4^{-k}k^{-\alpha-1})
			<\infty.
		\end{aligned}
		\]
		Thus \(\Omega_0\in\cK_a(\Sph)\).  Moreover, since \(a<1\), rotational invariance
		and local integrability at the two zeros give
		\[
		C_a:=\sup_{\xi\in\Sph}
		\int_{\Sph}|\theta\cdot\xi|^{-a}\,d\sigma(\theta)<\infty.
		\]
		Consequently,
		\[
		\norm{\Omega}_{\cK_a}
		\le \norm{\Omega_0}_{\cK_a}+\mu_0 C_a<\infty,
		\]
		so the centered function \(\Omega\) also lies in \(\cK_a(\Sph)\).
		
		Finally, since \(a>\frac12\), for \(0<t\le1\) one has
		\[
		t^{-1/2}\left(\log\frac1t\right)^\beta\le C_{a,\beta}t^{-a}.
		\]
		Applying this with \(t=|\theta\cdot\xi|\) gives
		\[
		\norm{\Omega}_{\cK_{1/2,\beta}}
		\le C_{a,\beta}\norm{\Omega}_{\cK_a}<\infty.
		\]
		Therefore \(\Omega\) has mean zero, belongs to
		\(\cK_{1/2,\beta}(\Sph)\), and does not belong to
		\(L(\log L)^\alpha(\Sph)\).
	\end{proof}
	
	\begin{proof}[Proof of Theorem~\ref{thm:incomparability}]
		The two non-inclusions follow from
		Propositions~\ref{prop:Orlicz-not-Klog}
		and~\ref{prop:Klog-not-Orlicz}.  Taking \(\alpha=1\) gives the final
		assertion concerning \(L\log L(\Sph)\).
	\end{proof}

	\end{document}